\newcommand{\ubar}[1]{\underaccent{\bar}{#1}}
\newcommand{\vA}[0]{\mathbf A}
\newcommand{\ve}[0]{\mathbf e}
\newcommand{\vu}[0]{\mathbf u}
\newcommand{\vv}[0]{\mathbf v}
\newcommand{\vw}[0]{\mathbf w}
\newcommand{\vx}[0]{\mathbf x}
\newcommand{\vX}[0]{\mathbf X}
\newcommand{\vz}[0]{\mathbf z}
\newcommand{\vzero}[0]{\mathbf 0}
\newcommand{\vone}[0]{\mathbf 1}
\newcommand{\mbf}[1]{\mbox{\boldmath$#1$}}
\newcommand{\Cov}[0]{\text{Cov}}
\newcommand{\diag}[0]{\text{diag}}
\newcommand{\tr}[0]{\text{tr}}
\newcommand{\calI}[0]{\mathcal{I}}
\newcommand{\E}[0]{\mathbb{E}}
\newcommand{\supp}[0]{\text{supp}}
\newcommand{\Prob}[0]{\mathbb{P}}
\newcommand{\I}[0]{\mathbb{I}}
\newcommand{\FPR}[0]{\text{FPR}}
\newcommand{\FNR}[0]{\text{FNR}}
\numberwithin{equation}{section}
\theoremstyle{plain}
\newtheorem{thm}{Theorem}[section]
\newtheorem{lem}[thm]{Lemma}
\theoremstyle{definition}
\newtheorem{defn}{Definition}[section]
\newtheorem{exmp}{Example}[section]
\theoremstyle{remark}
\begin{document}

\begin{frontmatter}
\title{Sparse transition matrix estimation for high-dimensional and locally stationary vector autoregressive models\thanksref{t1}}
\runtitle{High-dimensional VAR}
%\thanksref{T1}{Footnote to the title with the `thankstext' command.}

\begin{aug}
\author{\fnms{Xin} \snm{Ding$^\ddagger$,}\ead[label=e1]{xin.ding@stat.ubc.ca}}
\author{\fnms{Ziyi} \snm{Qiu$^\dagger$$^\star$,}\ead[label=e2]{ziyiqiu@illinois.edu}}
\author{\fnms{Xiaohui} \snm{Chen$^\dagger$}\ead[label=e3]{xhchen@illinois.edu }}

\address{$^\ddagger$University of British Columbia\\
$^\dagger$University of Illinois at Urbana-Champaign\\
$^\star$University of Chicago\\
\printead{e1,e2,e3}}

\thankstext{t1}{Research partially supported by NSF grant DMS-1404891 and UIUC Research Board Award RB15004.}
\runauthor{X. Ding et al.}

\affiliation{University of Illinois at Urbana-Champaign}
\end{aug}

\begin{abstract}
We consider the estimation of the transition matrix in the high-dimensional time-varying vector autoregression (TV-VAR) models. Our model builds on a general class of locally stationary VAR processes that evolve smoothly in time. We propose a hybridized kernel smoothing and $\ell^1$-regularized method to directly estimate the sequence of time-varying transition matrices. Under the sparsity assumption on the transition matrix, we establish the rate of convergence of the proposed estimator and show that the convergence rate depends on the smoothness of the locally stationary VAR processes only through the smoothness of the transition matrix function. In addition, for our estimator followed by thresholding, we prove that the false positive rate (type I error) and false negative rate (type II error) in the pattern recovery can asymptotically vanish in the presence of weak signals without assuming the minimum nonzero signal strength condition. Favorable finite sample performances over the $\ell^2$-penalized least-squares estimator and the unstructured maximum likelihood estimator are shown on simulated data. We also provide two real examples on estimating the dependence structures on financial stock prices and economic exchange rates datasets.
\end{abstract}

\begin{keyword}[class=MSC]
\kwd[Primary ]{62M10}
\kwd{62H12}
\kwd[; secondary ]{91B84}
\end{keyword}

\begin{keyword}
\kwd{vector autoregression}
\kwd{time-varying parameters}
\kwd{locally stationary processes}
\kwd{kernel smoothing}
\kwd{high-dimension}
\kwd{sparsity}
\end{keyword}
\tableofcontents
\end{frontmatter}

\section{Introduction}
\label{sec:introduction}

Vector autoregression (VAR) is a basic tool in multivariate time series analysis and it has been extensively used to model the cross-sectional and serial dependence in various applications from economics and finance \cite{sim1980,Backus1986,baillielippensmcmahon1983,campbellshiller1988,sims1992,cochrane1994,stockwatson2005,Coad2010,anwang2012}. There are two fundamental limitations of the vector-autoregressive models. First, conventional methods to estimate the transition matrix of a VAR model are based on the least squares (LS) estimator and the maximum likelihood estimator (MLE), in which the parameter estimation is consistent when the sample size increases and the model size is {\it fixed} \cite{brockwelldavis1991,lutkepohl2007}. Since the number of parameters grows quadratically in the number of time series variables, the VAR model typically includes no more than ten variables in many real applications \cite{fanhanliu2014}. However, due to the recent explosive data enrichment, analysis of panel data with a few hundred variables is often encountered, in which the LS and MLE are not suitable even for a moderate problem size. Second, stationarity plays a major role in the VAR model. Therefore, the stationary VAR does not capture the time-varying underlying data generation structures, which have been observed in a broad regime of applications in economics and finance \cite{angpiazzesi2003,Primiceri2005}.

Motivated by the limitations of the VAR, this paper studies the estimation problems of the time-varying VAR (TV-VAR) model for high-dimensional time series data. Let $\vX_{d \times n} = (\vx_1,\cdots,\vx_n)$ be a sequence of $d$-dimensional observations generated by a mean-zero TV-VAR of order 1 (TV-VAR(1))
\begin{equation}
\label{eqn:tvvar}
\vx_i =  A(i/n) \vx_{i-1}  + \ve_i,
\end{equation}
where $A(t), t \in [0,1]$, is a $d \times d$ matrix-valued deterministic function consisting of the transition matrices $A_i:=A(i/n)$ at evenly spaced time points and $\ve_i$ are independent and identically distributed (iid) mean-zero random errors, i.e. innovations. In this paper, our main focus is to estimate the transition matrices $A_i$ for the TV-VAR(1) and the extension to higher-order VAR is straightforward. Indeed, for a general TV-VAR of order $k \ge 1$
\begin{equation*}
\vx_i =  A_{i,1} \vx_{i-1} + A_{i,2} \vx_{i-2} + \cdots + A_{i,k} \vx_{i-k}  + \ve_i,
\end{equation*}
we can rewrite $\vz_i = (\vx_i^\top, \cdots, \vx_{i-k+1}^\top)^\top$ at time $i = k, k+1,\cdots, n$, as a TV-VAR(1) in the augmented space
\begin{equation*}
%\label{eqn:tvvar-equiv}
\vz_i = \vA_i \vz_{i-1} + \tilde\ve_i,
\end{equation*}
where
\begin{equation*}
%\label{eqn:A_pattern}
\vA_i = \left(
\begin{array}{ccccc}
A_{i,1} & A_{i,2} & \cdots & A_{i,k-1} & A_{i,k} \\
I_{d \times d} & \vzero_{d \times d} & \cdots & \vzero_{d \times d} & \vzero_{d \times d} \\
\vzero_{d \times d} & I_{d \times d} & \cdots & \vzero_{d \times d} & \vzero_{d \times d} \\
\vdots & \vdots & \vdots & \vdots & \vdots \\
\vzero_{d \times d} & \vzero_{d \times d} & \cdots & I_{d \times d} & \vzero_{d \times d} \\
\end{array}
\right) , \qquad \tilde\ve_i = \left(
\begin{array}{c}
\ve_i \\
\vzero_{d \times 1} \\
\vdots \\
\vzero_{d \times 1}
\end{array} \right) ,
\end{equation*}
$I_{d \times d}$ is the $d \times d$ identity matrix and $\vzero_{d \times d'}$ is the $d \times d'$ zero matrix. Then, we need only to estimate the first $d$ rows in $\vA_i$.

To make the estimation problem feasible for high-dimensional time series when $d$ is large, it is crucial to carefully regularize the coefficient matrices $A_i$. The main idea is to use certain low-dimensional structures in $A_i$ and the degree of freedom of (\ref{eqn:tvvar}) can be dramatically reduced. In our problem, we assume two key structures in $A_i$. First, since our goal is to estimate a sequence of transition matrices, which can be viewed as the discrete version of a matrix-valued function, it is natural to impose the {\it smoothness} condition to $A(t)$. In this case, (\ref{eqn:tvvar}) is closely related to the {\it locally stationary processes}, a general class of non-stationary processes proposed by \cite{dahlhaus1997a}. Examples of other linear and nonlinear locally stationary processes can be found in \cite{chenxuwu2013}. In particular, let $i_0 = 1,\cdots,n$, be any time point and $t_0=i_0/n$. Then, under suitable regularity conditions \cite{Dahlhaus2012}, there exists a stationary process $\tilde{\vx}_i(t_0)$ such that for all $j=1,\cdots,d$,
$$
|X_{ij} - \tilde{X}_{ij}(t_0)| = O_\Prob(|i/n-t_0| + 1/n),
$$
where $X_{ij}$ and $\tilde{X}_{ij}(t_0)$ are the $j$-th element of ${\vx}_i$ and $\tilde{\vx}_i(t_0)$ respectively, and
$$
\tilde{\vx}_i(t_0) = A(t_0) \tilde{\vx}_{i-1}(t_0) + \ve_i.
$$
Therefore, $\vx_i$ is an approximately stationary VAR process $\tilde{\vx}_i(t_0)$ in a small neighborhood of $t_0$.

Second, at each time point $i=1,\cdots,n$, we need to estimate a $d \times d$ matrix. There have been different structural options in literature, such as the sparse \cite{hanluliu2015,wanglitsai2007a,songbickel2011,shojaiemichailidis2010}, banded \cite{guowangyao2015}, and low-rank \cite{hanxuliu2016} transition matrix, all of which only considered the stationary VAR model. In this paper, we consider a sequence of {\it sparse} transition matrices and we allow the sparsity patterns to change over time. Note that our problem is also different from the emerging literature for estimating the high-dimensional covariance matrix and its related functionals of time series data \cite{chenxuwu2013,chenxuwu2016,zhoulaffertywasserman2010a,basumichailidis2015}, since our goal is to directly estimate the data generation mechanism specified by $A(\cdot)$.

To simultaneously address the two issues, we propose a hybridized method of the nonparametric smoothing technique and $\ell^1$-regularization to estimate the sparse transition matrix of the locally stationary VAR. The proposed method is equivalent to solving a sequence of large numbers of linear programs and therefore the estimates can be efficiently obtained by using the high-performance (i.e. parallel) computing technology; see Section \ref{sec:method} for details. In Section \ref{sec:asymptotics}, we establish the rate of convergence under suitable assumptions on the smoothness of the covariance function and the sparsity of the transition matrix function $A(\cdot)$. Specifically, the dimension $d$ is permitted to increase sub-exponentially fast in the sample size $n$ to obtain consistent estimation, i.e. $d = o(\exp(n))$. In addition, we also prove that when our estimator is followed by thresholding, type I and type II errors in the pattern recovery asymptotically vanish in the presence of weak signals. In contrast with the existing literature on consistent model selection, we do not require the minimum nonzero signal strength to be bounded away from zero. Simulation studies in Section \ref{sec:simulation} and two real data examples in Section \ref{sec:real-data-analysis} demonstrate favorable performances and more interpretable results of the proposed TV-VAR model. Technical proofs are deferred to Section \ref{sec:proofs}.

We fix the notations that will be used in the rest of the paper. Let $M$ be a generic matrix, $\calI \subset \mathbb{N}$ is an index subset, $[M]_{*\calI}$ and $[M]_{\calI*}$ be the submatrix of $M$ with columns and rows indexed by $\cal I$, resp. Denote $J = \{1,\cdots,d\}$. Let $\rho(M) = \sup_{|\vv|=1} |M \vv|$ be the spectral norm of $M$. $|M|_1, |M|_\infty$ and $|M|_F$ are the entry $\ell^1$, $\ell^\infty$ and Frobenius norm of $M$, resp. $|M|_{\ell^1} = \max_{j \le d} \sum_{k=1}^d |M_{jk}|$ and $|M|_{\ell^\infty} = \max_{k \le d} \sum_{j=1}^d |M_{jk}|$ are the matrix $\ell^1$ and $\ell^\infty$ norm of $M$, resp. $\supp(M) = \{(\ell, j) : M_{\ell j} \neq 0\}$ is the support of $M$ and $|S|$ is the number of nonzeros in the support set $S$. For $q > 0$ and a random variable (r.v.) $X$, $\|X\|_q = (\E|X|^q)^{1/q}$ and we say that $X \in {\cal L}^q$ iff $\|X\|_q < \infty$. Write $\|X\|=\|X\|_2$. Denote $a \wedge b = \min(a, b)$ and $a \vee b = \max(a, b)$. If $a$ and $b$ are vectors, then the maximum and minimum are interpreted element-wise.

\section{Method and algorithm}
\label{sec:method}

For $m \in \mathbb{Z}$, let $\Sigma_{i,m} = \Cov(\vx_i, \vx_{i+m}) = \E(\vx_i \vx_{i+m}^\top)$. Then
\begin{equation}
\label{eqn:key-observation}
\Sigma_{i-1,1} = \Sigma_{i-1,0} A_i^\top \quad  \text{and} \quad \Sigma_{i,-1} = A_i \Sigma_{i-1,0}.
\end{equation}
Therefore, for any estimator, say $\hat{A}_i$, that is reasonably close to the true coefficient matrices $A_i$, we must have $(\Sigma_{i-1,1} -\Sigma_{i-1,0} \hat{A}_i^\top ) \approx \vzero_{d \times d}$ and $(\Sigma_{i,-1} - \hat{A}_i \Sigma_{i-1,0}) \approx \vzero_{d \times d}$. A naive estimator for $A_i$ would be constructed to invert the sample versions of (\ref{eqn:key-observation}). Estimators of this kind do not have good statistical properties in high-dimensions because of the ill-conditioning and the dependence information in $A_i$ is not directly used in estimation. If $A_i$ is known to be sparse as {\it a priori}, we may consider the following constrained minimization program
\begin{eqnarray}
\label{eqn:tvvar-clime_oracle}
\text{minimize}_{ \Lambda \in \mathbb{R}^{d \times d} } && |\Lambda|_{1} \\ \nonumber
\text{subject to} && | \Sigma_{i-1,1} -\Sigma_{i-1,0} \Lambda |_\infty \le \tau \\ \nonumber
&& | \Sigma_{i,-1} - \Lambda^\top \Sigma_{i-1,0}|_\infty \le \tau.
\end{eqnarray}
Because $\Sigma_{i,1}$ and $\Sigma_{i,0}$ are unknown, the solution of (\ref{eqn:tvvar-clime_oracle}) is an {\it oracle} estimator and therefore it is not implementable in practice.

Let $A(\cdot)$ be the continuous version of $A_i, i =1,\cdots,n$, in (\ref{eqn:tvvar}), and fix a $t \in (0,1)$. To estimate $A(t)$, we first estimate $ \Sigma_{i,1}$ and $\Sigma_{i,0}$ in (\ref{eqn:key-observation}) by their empirical versions. Let
\begin{equation}
\hat{\Sigma}_{t,\ell} = \sum_{m=1}^n w(t,m) \vx_m \vx_{m+\ell}^\top, \qquad \ell = -1, 0, 1,
\end{equation}
be the kernel smoothed estimator of $ \Sigma_{i,0}, \Sigma_{i,1}$ and $\Sigma_{i,-1}$, where $w(t,m)$ are the nonnegative weights. Here, we consider the Nadaraya-Watson smoothed estimator
\begin{equation}
w(t,m) = {K({t-m/n \over b_n}) \over \sum_{\ell=1}^n K({t - \ell/n \over b_n })},
\end{equation}
where $K(\cdot)$ is a nonnegative bounded kernel function with support in $[-1,1]$ such that $\int_{-1}^1 K(v) dv = 1$ and $b_n$ is the bandwidth parameter. We assume throughout the paper that the bandwidth satisfies the natural conditions: $b_n = o(1)$ and $n^{-1} = o(b_n)$. Then, our estimator $\hat{A}(t)$ is defined as the transpose of the solution of
\begin{eqnarray}
\label{eqn:tvvar-clime}
\text{minimize}_{\Lambda \in \mathbb{R}^{d \times d} } && |\Lambda|_{1} \\ \nonumber
\text{subject to} && | \hat{\Sigma}_{i-1,1} -\hat{\Sigma}_{i-1,0} \Lambda |_\infty \le \tau \\ \nonumber
&& | \hat{\Sigma}_{i,-1} - \Lambda^\top \hat{\Sigma}_{i-1,0}|_\infty \le \tau.
\end{eqnarray}
Observe that (\ref{eqn:tvvar-clime}) is equivalent to the following $d$ optimization sub-problems
\begin{eqnarray}
\label{eqn:tvvar-subproblems}
\hat{\mbf\beta}_j(t) &=& \arg\min_{ \vu \in \mathbb{R}^d} |\vu|_1 \\ \nonumber
\text{subject to} && |[\hat{\Sigma}_{i-1,1}]_{*j} - \hat{\Sigma}_{i-1,0} \vu |_\infty \le \tau \\ \nonumber
&& | [\hat{\Sigma}_{i,-1}]_{j*} - \vu^\top \hat{\Sigma}_{i-1,0}|_\infty \le \tau
\end{eqnarray}
in that $[\hat{A}(t)]_{j*} = \hat{\mbf\beta}_j(t)^\top$. Since the $d$ sub-problems (\ref{eqn:tvvar-subproblems}) can be independently solved, we can efficiently compute the solution of (\ref{eqn:tvvar-clime}) by parallelizing the optimizations of (\ref{eqn:tvvar-subproblems}). In addition, each sub-problem in (\ref{eqn:tvvar-subproblems}) can be recasted as a linear program (LP)
\begin{eqnarray*}
\text{minimize}_{\vv, \vu \in \mathbb{R}^d_+} &  & |\vv|_1 + |\vw|_1 \\
\text{subject to} & &  -\hat{\Sigma}_{i-1,0} \vv + \hat{\Sigma}_{i-1,0} \vw \le \tau - \max\{ [\hat{\Sigma}_{i-1,1}]_{*j}, [\hat{\Sigma}_{i-1,-1}]_{*j}^\top \} \\
& & \hat{\Sigma}_{i,0} \vv - \hat{\Sigma}_{i-1,0} \vw \le \tau + \min \{ [\hat{\Sigma}_{i,1}]_{*j},  [\hat{\Sigma}_{i,-1}]_{*j}^\top \}
\end{eqnarray*}
so that $\hat{\mbf\beta}_j(t) = \hat{\vv} - \hat{\vw}$, where $(\hat{\vv}, \hat{\vw})$ is the nonnegative solution of the LP. Note that (\ref{eqn:tvvar-subproblems}) can be efficiently solved by the simplex algorithm \cite{murty1983linear}. Compared with the estimation of sparse transition matrix for the stationary VAR model \cite{hanluliu2015}, our estimator requires to solve two sample versions of Yule-Walker equations in (\ref{eqn:key-observation}) and (\ref{eqn:tvvar-clime}) due to the non-stationarity and therefore two-sided constraints of the estimator $\hat{A}(t)$ are needed in (\ref{eqn:tvvar-clime}).

\section{Asymptotic properties}
\label{sec:asymptotics}

\subsection{Locally stationary VAR process in $L^2$}

To establish an asymptotic theory for estimating the continuous matrix-valued transition function $A(\cdot)$ of the non-stationary VAR, it is more convenient to model the time series as realizations from a continuous $d$-dimensional process. Following the framework of \cite{draghicescuguillaswu2009a}, $\vx_1,\cdots,\vx_n$ are viewed as realizations of the continuous $d$-dimensional process $\vx(t)=(X_1(t),\cdots,X_d(t))^\top, t \in [0,1],$ on the discrete grid $t_i=i/n,i=1,\cdots,n$, i.e. $\vx(i/n)=A(i/n)\vx((i-1)/n)+\ve(i/n)$ and the temporal dependence of $\vx_i$ is carried out on the rescaled time indices $t_i$.

\begin{defn}[Locally stationary VAR process in $L^2$]
\label{def:locally-stationary-L2-process}
A mean-zero non-stationary VAR process $\{\vx_i; i =1,\cdots,n\}$ in $\mathbb{R}^d$ of the form (\ref{eqn:tvvar}) is said to be {\it locally stationary in $L^2$}, or {\it weakly locally stationary}, if for every $i=1,\cdots,n$ and $t=i/n$, there exists a mean-zero stationary VAR process $\{\tilde{\vx}_m(t); m=1,\cdots,n\}$ given by
\begin{equation}
\label{eqn:approx_stat-var}
\tilde{\vx}_m(t) = A(t) \tilde{\vx}_{m-1}(t) + \ve_m
\end{equation}
such that for all $m=1,\cdots,n,$
\begin{equation}
\label{eqn:locally-stat-condition}
\max_{1 \le j \le d} \|\tilde{X}_{mj}(t) - X_{mj}\| \le C \left(\left|{m \over n}-t\right| + {1\over n}\right)
\end{equation}
and the constant $C>0$ does not depend on $d,m,$ and $n$.
\end{defn}

Note that the approximating stationary VAR process in Definition \ref{def:locally-stationary-L2-process} generally depends on $t$. As suggested by the Yule-Walker equations (\ref{eqn:key-observation}), given a fixed time of interest, estimation of $A(t)$ relies on an estimate of $\Sigma_{t,\ell}$. Consider $\ell=0$ and let $\Sigma(t) = \Sigma_{t,0}$ be the matrix-valued covariance function at lag-zero. With a finite number of observations $\vx_1,\cdots,\vx_n$, it is unclear how to define the covariance function $\Sigma(\cdot)$ off the $n$ points $t_i=i/n$. Nevertheless, the weakly locally stationary VAR processes provide a natural framework for extending $\Sigma(t_i)$ to $\Sigma(t)$ for all $t \in (0,1)$.

Since $A(\cdot)$ is continuous on $[0,1]$, the stationary VAR process in (\ref{eqn:approx_stat-var}) is defined for all $t \in (0,1)$. Let $\tilde\Sigma(t) = \E[\tilde{\vx}_m(t) \tilde{\vx}_m(t)^\top]$. By (\ref{eqn:locally-stat-condition}) and the Cauchy-Schwarz inequality, we have for each $j,k=1,\cdots,d,$
\begin{eqnarray*}
|\sigma_{jk}(t_i) - \tilde\sigma_{jk}(t_i)| &=& |\E(X_{ij}X_{ik})-\E(\tilde{X}_{ij}(t_i)\tilde{X}_{ik}(t_i))| \\
&\le& \|X_{ij}\|_2 \cdot \|X_{ik}-\tilde{X}_{ik}(t_i)\| + \|\tilde{X}_{ik}(t_i)\| \cdot \|X_{ij}-\tilde{X}_{ij}(t_i)\| \\
&\le& (\|X_{ij}\|_2 + \|\tilde{X}_{ik}(t_i)\|) \max_{1 \le j \le d} \|X_{ij}-\tilde{X}_{ij}(t_i)\| \\
&\le& C n^{-1},
\end{eqnarray*}
where the constant $C$ here is uniform in $j,k=1,\cdots,d$ and $i=1,\cdots,n$. Therefore, we get
$$
\Sigma(t_i) = \tilde\Sigma(t_i) + O(n^{-1}).
$$
Letting $n \to \infty$ and by the continuity of $A(\cdot)$, we can extend $\Sigma(t) = \tilde\Sigma(t)$ for all $t \in (0,1)$. Similar extension can be done for $\Sigma_{t,\ell}$ for $\ell=\pm1$. In Section \ref{subsec:rate-of-convergence}, it will be shown that the asymptotic theory of estimating $A(t), t \in (0,1)$ depends on the smoothness of the weakly locally stationary VAR processes only through the smoothness of $\Sigma(t)$ and therefore $A(t)$.

\subsection{Rate of convergence}
\label{subsec:rate-of-convergence}

In this section, we characterize the rate of convergence of our estimator (\ref{eqn:tvvar-clime}) under various matrix norms. We assume $d \ge 2$. To study the asymptotic properties of the proposed estimator, we make the following assumptions on model (\ref{eqn:tvvar}).
\begin{enumerate}
\item The coefficient matrices are sparse: let $0 \le \alpha < 1$, for each $i=1,\cdots,n,$
\begin{equation}
\label{eqn:sparse-matrix}
A_{i} \in {\cal G}_\alpha(s, M_d) = {\cal G}_\alpha^{|}(s, M_d) \cap {\cal G}_\alpha^{-}(s, M_d),
\end{equation}
where
\begin{eqnarray*}
{\cal G}_\alpha^{\vert}(s, M_d) &=& \Big \{ \Theta \in \mathbb{R}^{d \times d} : \max_{j \le d} \sum_{\ell=1}^d |\Theta_{\ell j}|^\alpha \le s, |\Theta|_{\ell^1} \le M_d \Big \}, \\
{\cal G}_\alpha^{-}(s, M_d) &=& \Big \{ \Theta \in \mathbb{R}^{d \times d} : \max_{\ell \le d} \sum_{j=1}^d |\Theta_{\ell j}|^\alpha \le s, |\Theta|_{\ell^\infty} \le M_d \Big \}.
\end{eqnarray*}
\item The marginal and lag-one covariance matrix processes $\{\Sigma_0(t)\}_{t \in [0,1]}$ and $\{\Sigma_1(t)\}_{t \in [0,1]}$ are smooth functions in $\mathbb{R}^{d \times d}$: $\Sigma_{\ell,jk}(t) \in {\cal C}^2([0,1]), \ell = 0,1$, where ${\cal C}^2([0,1])$ is the class of functions defined on $[0,1]$ that are twice differentiable with bounded derivatives uniformly in $j,k=1,\cdots,d$.
\item The random innovations $\ve_i = (e_{i1},\cdots,e_{id})^\top$ have iid components and sub-Gaussian tail: $\Prob(|e_{ij}| \ge x) \le C e^{-c x^2}$ for all $x > 0$.
\end{enumerate}

Before proceeding, we discuss the above assumptions. (\ref{eqn:sparse-matrix}) requires that the transition matrices are sparse in both columns and rows at all time points. A similar matrix class defined by (\ref{eqn:sparse-matrix}) was first proposed in \cite{bickellevina2008a} for {\it symmetric} matrices and it has been widely used for estimating high-dimensional covariance and precision matrix; see e.g. \cite{cailiuluo2011a,chenxuwu2013}. If $\alpha =0$, then the maximum number of nonzeros in columns and rows of $A_i$ is at most $s$. Assumption 2 requires that the marginal and lag-one covariance matrices evolve smoothly in time. The smoothness is not defined directly on $A(\cdot)$ for the ease of theorem statements. In view of (\ref{eqn:locally-stat-condition}), Assumption 2 is implied by the smoothness on $A_i$ under extra regularity conditions. For a generic matrix $M(t)$ parameterized by $t \in (0,1)$, we let $\dot{M}(t)$ and $\ddot{M}(t)$ be the first two element-wise derivatives of $M(t)$ w.r.t. $t$.

\begin{lem}
\label{lem:smoothness-relation}
Assume that $\sup_{t \in [0,1]} |A(t)|_{\ell^1} < 1$. Then we have
\begin{equation}
\label{eqn:smoothness-relation-1dev}
|\dot{\Sigma}(t)|_\infty \le {2 |\dot{A}(t)|_\infty \cdot |A(t)|_{\ell^1} \cdot |\Sigma(t)|_{\ell^1} \over 1 - |A(t)|_{\ell^1}^2}
\end{equation}
and
\begin{eqnarray}
\label{eqn:smoothness-relation-2dev}
|\ddot{\Sigma}(t)|_\infty &\le& {8 |\dot{A}(t)|_\infty \cdot |A(t)|_{\ell^1}^2 \cdot |\dot{A}(t)|_{\ell^1} \cdot |\Sigma(t)|_{\ell^1} \over (1 - |A(t)|_{\ell^1}^2)^2} \\ \nonumber
&& + {4 \max\{  |\dot{A}(t)|_\infty, \;  |\ddot{A}(t)|_\infty \} \cdot \max\{ |A(t)|_{\ell^1}, \; |\dot{A}(t)|_{\ell^1}\} \cdot  |\Sigma(t)|_{\ell^1}  \over 1 - |A(t)|_{\ell^1}^2}.
\end{eqnarray}
\end{lem}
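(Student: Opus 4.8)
The plan is to use the lag-zero Yule--Walker (discrete Lyapunov) identity satisfied by the covariance of the stationary approximating process and to differentiate it twice in $t$. Fix $t\in(0,1)$ and abbreviate $A=A(t)$, $\Sigma=\Sigma(t)$ (so $\Sigma=\tilde\Sigma(t)$), and $\Sigma_e=\E[\ve_1\ve_1^\top]$, which does not depend on $t$. From (\ref{eqn:approx_stat-var}) and stationarity of $\{\tilde\vx_m(t)\}$ one has $\Sigma=A\Sigma A^\top+\Sigma_e$. Since $|\cdot|_{\ell^1}$ is the induced $\ell^\infty$ operator norm, it is submultiplicative, so $|A^k|_{\ell^1}\le|A|_{\ell^1}^k$; with $|A|_{\ell^1}<1$ the series $\sum_{k\ge0}A^k\Sigma_e(A^\top)^k$ therefore converges entrywise absolutely, an index shift shows it solves $X-AXA^\top=\Sigma_e$, and $D=ADA^\top$ iterates to $D=A^kD(A^\top)^k$ for all $k$, which forces $D=0$; hence $\Sigma=\sum_{k\ge0}A^k\Sigma_e(A^\top)^k$ is the unique solution. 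Because $A(\cdot)\in{\cal C}^2$ with $\sup_t|A(t)|_{\ell^1}<1$, the same geometric domination governs the first two termwise $t$-derivatives of this series, so $\Sigma(\cdot)\in{\cal C}^2$ and the identity $\Sigma-A\Sigma A^\top=\Sigma_e$ may be differentiated; note also that $\Sigma(t)$, hence $\dot\Sigma(t)$, is symmetric.

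I would isolate two workhorse estimates. First, for any $Y\in\mathbb{R}^{d\times d}$, the solution of $X-AXA^\top=Y$ satisfies
$$
\Bigl|\sum_{k\ge0}A^kY(A^\top)^k\Bigr|_\infty\;\le\;{|Y|_\infty\over 1-|A|_{\ell^1}^2},
$$
which follows from $(A^kY(A^\top)^k)_{ij}=\sum_{p,q}(A^k)_{ip}Y_{pq}(A^k)_{jq}$, $|Y_{pq}|\le|Y|_\infty$, collapsing the two remaining sums into row sums of $A^k$ (so $|A^kY(A^\top)^k|_\infty\le|Y|_\infty|A^k|_{\ell^1}^2\le|Y|_\infty|A|_{\ell^1}^{2k}$), and summing. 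Second, the elementary ``sandwich'' bounds $|U\Theta V^\top|_\infty\le|\Theta|_\infty|U|_{\ell^1}|V|_{\ell^1}$, and, for symmetric $\Theta$, $|U\Theta V^\top|_\infty\le|U|_\infty|V|_{\ell^1}|\Theta|_{\ell^1}$ and $|U\Theta V^\top|_\infty\le|V|_\infty|U|_{\ell^1}|\Theta|_{\ell^1}$, each obtained by pulling one factor out in the entrywise sup-norm and collapsing the remaining two index sums into row sums (and, for $\Theta$, a column sum which equals a row sum by symmetry). These are applied with $\Theta\in\{\Sigma,\dot\Sigma\}$.

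For (\ref{eqn:smoothness-relation-1dev}): differentiating once gives $\dot\Sigma-A\dot\Sigma A^\top=\dot A\Sigma A^\top+A\Sigma\dot A^\top$; inverting by the first estimate and bounding each term on the right by $|\dot A|_\infty|A|_{\ell^1}|\Sigma|_{\ell^1}$ via the sandwich bounds gives $|\dot\Sigma|_\infty\le 2|\dot A|_\infty|A|_{\ell^1}|\Sigma|_{\ell^1}/(1-|A|_{\ell^1}^2)$. For (\ref{eqn:smoothness-relation-2dev}): differentiating once more and collecting terms by the product rule,
$$
\ddot\Sigma-A\ddot\Sigma A^\top=\ddot A\Sigma A^\top+A\Sigma\ddot A^\top+2\dot A\Sigma\dot A^\top+2\dot A\dot\Sigma A^\top+2A\dot\Sigma\dot A^\top.
$$
Inverting again and estimating the five terms: the three without $\dot\Sigma$ sum to at most $2|\ddot A|_\infty|A|_{\ell^1}|\Sigma|_{\ell^1}+2|\dot A|_\infty|\dot A|_{\ell^1}|\Sigma|_{\ell^1}\le 4\max\{|\dot A|_\infty,|\ddot A|_\infty\}\max\{|A|_{\ell^1},|\dot A|_{\ell^1}\}|\Sigma|_{\ell^1}$ (applying $x+y\le2(x\vee y)$ twice), giving the second summand of (\ref{eqn:smoothness-relation-2dev}) after dividing by $1-|A|_{\ell^1}^2$; each of the two terms with $\dot\Sigma$ is at most $2|\dot\Sigma|_\infty|A|_{\ell^1}|\dot A|_{\ell^1}$, and substituting the first-derivative bound turns the pair into $8|\dot A|_\infty|A|_{\ell^1}^2|\dot A|_{\ell^1}|\Sigma|_{\ell^1}/(1-|A|_{\ell^1}^2)$, which after dividing by $1-|A|_{\ell^1}^2$ is the first summand of (\ref{eqn:smoothness-relation-2dev}).

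The main obstacle is purely organizational: carrying out the product-rule expansion of $\ddot\Sigma-A\ddot\Sigma A^\top$ without error, and for each triple product $U\Theta V^\top$ that appears, choosing which of its three factors to take out in the entrywise sup-norm and which to keep in the induced $\ell^1$ norm so that the constants $8$ and $4$ and the two $\max\{\cdot,\cdot\}$ groupings emerge exactly as stated; the squared denominator in (\ref{eqn:smoothness-relation-2dev}) is produced precisely by nesting the first-derivative bound inside the estimate for the second. The remaining analytic points --- geometric-series convergence, ${\cal C}^2$ regularity of $\Sigma(\cdot)$, and legitimacy of termwise differentiation --- are routine once $\sup_t|A(t)|_{\ell^1}<1$ is assumed.
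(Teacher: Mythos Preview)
Your proposal is correct and follows essentially the same route as the paper: differentiate the Lyapunov identity $\Sigma=A\Sigma A^\top+\Sigma_e$, bound the resulting triple products via the sandwich estimates (which are exactly the paper's Lemma~\ref{lem:norm-bounds-three-matrices}), and divide through by $1-|A|_{\ell^1}^2$. The only cosmetic difference is that the paper isolates $|\dot\Sigma|_\infty$ by solving the scalar inequality $|\dot\Sigma|_\infty\le|Y|_\infty+|A|_{\ell^1}^2|\dot\Sigma|_\infty$ directly rather than inverting the Lyapunov operator via the series $\sum_{k\ge0}A^kY(A^\top)^k$, and it leaves the second-derivative bookkeeping implicit whereas you spell it out in full.
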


By Lemma \ref{lem:smoothness-relation}, if $\sup_{t \in [0,1]}|A(t)|_{\ell^1} < 1$ and $A_{jk}(\cdot) \in {\cal C}^2([0,1])$ such that $\sup_{t \in [0,1]}|\dot{A}(t)|_{\ell^1} \vee |\Sigma(t)|_{\ell^1} \le K$ for some constant $K$, then $\Sigma_{jk}(\cdot)$ is also a ${\cal C}^2([0,1])$ function and therefore Assumption 2 is fulfilled.

Assumption 3 specifies the tail probability of the innovations $\ve_i$. In \cite{hanluliu2015}, $\ve_i$'s follow iid $N(\vzero, \Psi)$ for some error covariance matrix $\Psi$. A simple transformation by $\Psi^{-1/2}$ will reduce to the case that $\ve_i$ has iid components with the standard normal distribution, a special case of Assumption 3, which covers the sub-Gaussian innovations.

\begin{thm}
\label{thm:rate-subGaussian-instantaneous}
Let $\bar{\rho} = \sup_{i \ge 0} \rho(A_i)$. Fix an $i \in [nb_n+1,n(1-b_n)+1]$. Suppose that Assumption 1,2,3 are satisfied, $\bar{\rho} < 1$ and $A_i \in {\cal G}_\alpha(s, M_d)$. Then, with probability at least $1 - 2 d^{-1}$, we have the estimator $\hat{A}_i$ with tuning parameter $\tau \ge C (1 + M_d) (b_n^2 + \sqrt{(\log d)/ (n b_n)})$ obeys
\begin{eqnarray}
\label{eqn:rate-instantaneous-max}
|\hat{A}_{i} - A_{i}|_\infty &\le& 2 \tau |\Sigma_{i-1,0}^{-1}|_{\ell^1}, \\
\label{eqn:rate-instantaneous-spectral}
\rho(\hat{A}_{i} - A_{i}) &\le& C(\alpha) s (\tau |\Sigma_{i-1,0}^{-1}|_{\ell^1})^{1-\alpha}, \\
\label{eqn:rate-instantaneous-F}
d^{-1} |\hat{A}_{i} - A_{i}|_F^2 &\le& C(\alpha) s (\tau |\Sigma_{i-1,0}^{-1}|_{\ell^1})^{2-\alpha}.
\end{eqnarray}
\end{thm}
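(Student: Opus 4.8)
The plan is to adapt the standard argument for $\ell^1$-constrained (CLIME-type) estimators to the kernel-smoothed, locally stationary setting; the only genuinely new ingredient is a uniform deviation bound for the smoothed autocovariances, which I also expect to be the main obstacle. Concretely, I would first establish that on an event $\mathcal E$ of probability at least $1-2d^{-1}$,
\[
\max_{\ell\in\{-1,0,1\}}\big|\hat\Sigma_{i-1,\ell}-\Sigma_{i-1,\ell}\big|_\infty\;\le\;C\Big(b_n^2+\sqrt{(\log d)/(nb_n)}\Big)\;=:\;\tau_0 .
\]
The bias and the stochastic fluctuation are handled separately. Since $i$ lies in the interior $[nb_n+1,n(1-b_n)+1]$, the Nadaraya--Watson window is away from the endpoints, so a second-order Taylor expansion of $\Sigma_{\cdot,\ell}(\cdot)$ (legitimate by Assumption 2), the symmetry $\int_{-1}^1 vK(v)\,dv=0$, and the $O(1/n)$ local-stationarity error in Definition \ref{def:locally-stationary-L2-process} give $|\E\hat\Sigma_{i-1,\ell}-\Sigma_{i-1,\ell}|_\infty=O(b_n^2)$. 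For the fluctuation, each centered entry is a weighted sum $\sum_m w(i/n,m)(x_{mj}x_{(m+\ell)k}-\E[x_{mj}x_{(m+\ell)k}])$; because $\bar\rho<1$ the process has geometrically decaying temporal dependence and, by Assumption 3, sub-Gaussian marginals, so the summands are sub-exponential with summable dependence, and a Bernstein-type bound yields a tail of order $\exp(-c\min\{t^2/v_n,\,t/\max_m w(i/n,m)\})$ with $v_n\asymp\sum_m w(i/n,m)^2\asymp (nb_n)^{-1}$. Taking $t\asymp\sqrt{(\log d)/(nb_n)}$ and a union bound over the $O(d^2)$ entries and the three lags gives the display. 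Pushing the dependence structure of the locally stationary VAR through a concentration inequality for this triangular array of weighted sums is the step I expect to be most delicate.

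Working on $\mathcal E$, I would next verify that $A_i^\top$ is feasible for (\ref{eqn:tvvar-clime}), i.e. that each column $[A_i^\top]_{*j}$ is feasible for sub-problem $j$ of (\ref{eqn:tvvar-subproblems}). Using the Yule--Walker identities (\ref{eqn:key-observation}),
\[
[\hat\Sigma_{i-1,1}]_{*j}-\hat\Sigma_{i-1,0}[A_i^\top]_{*j}=[\hat\Sigma_{i-1,1}-\Sigma_{i-1,1}]_{*j}+(\Sigma_{i-1,0}-\hat\Sigma_{i-1,0})[A_i^\top]_{*j},
\]
whose $\ell^\infty$-norm is at most $\tau_0+\tau_0\,|[A_i^\top]_{*j}|_1\le\tau_0(1+M_d)$, since $|[A_i^\top]_{*j}|_1$ is the $\ell^1$-norm of row $j$ of $A_i$ and hence $\le|A_i|_{\ell^1}\le M_d$; the second constraint is treated identically. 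Thus for $\tau\ge C(1+M_d)\tau_0$ the truth is feasible, so $|\hat{\mbf\beta}_j(t)|_1\le|[A_i^\top]_{*j}|_1$ for every $j$ and in particular $|\hat A_i^\top|_{\ell^\infty}=\max_j|\hat{\mbf\beta}_j(t)|_1\le M_d$. Writing $\hat\Lambda=\hat A_i^\top$ and $\Delta=\hat\Lambda-A_i^\top$, subtracting the first constraint of (\ref{eqn:tvvar-clime}) for $A_i^\top$ from the one satisfied by $\hat\Lambda$ gives $|\hat\Sigma_{i-1,0}\Delta|_\infty\le2\tau$; since $A_i^\top=\Sigma_{i-1,0}^{-1}\Sigma_{i-1,1}$ (and $\Sigma_{i-1,0}$ is invertible because $\bar\rho<1$ forces $\Sigma_{i-1,0}\succeq\E[\ve_i\ve_i^\top]\succ0$), the triangle inequality gives
\[
|\Sigma_{i-1,0}\hat\Lambda-\Sigma_{i-1,1}|_\infty\le|\hat\Sigma_{i-1,0}\hat\Lambda-\hat\Sigma_{i-1,1}|_\infty+\tau_0|\hat\Lambda|_{\ell^\infty}+\tau_0\le\tau+\tau_0(1+M_d)\le2\tau ,
\]
and since $|\Sigma_{i-1,0}^{-1}M|_\infty\le|\Sigma_{i-1,0}^{-1}|_{\ell^1}|M|_\infty$ this yields $|\hat A_i-A_i|_\infty=|\Delta|_\infty\le2\tau|\Sigma_{i-1,0}^{-1}|_{\ell^1}$, i.e. (\ref{eqn:rate-instantaneous-max}).

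For the operator-norm rates, set $\delta:=2\tau|\Sigma_{i-1,0}^{-1}|_{\ell^1}$, so $|\Delta|_\infty\le\delta$. This is the standard passage from an entrywise bound to matrix-norm bounds on the class ${\cal G}_\alpha$: for each row $j$, split $\sum_k|\Delta_{jk}|$ according to whether $|(A_i)_{jk}|>\delta$, bounding the large part by $\#\{k:|(A_i)_{jk}|>\delta\}\cdot\delta\le s\delta^{1-\alpha}$ (using $\sum_k|(A_i)_{jk}|^\alpha\le s$) and the small part by combining $|\Delta_{jk}|\le\delta$, the same weak-sparsity inequality, and the $\ell^1$-optimality $|\hat{\mbf\beta}_j(t)|_1\le|[A_i^\top]_{*j}|_1$ from the previous paragraph; this gives $|\hat A_i-A_i|_{\ell^1}\le C(\alpha)s\delta^{1-\alpha}$. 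The matching control of the column sums, $|\hat A_i-A_i|_{\ell^\infty}\le C(\alpha)s\delta^{1-\alpha}$, uses the column-sparsity half of ${\cal G}_\alpha$ and the second constraint in (\ref{eqn:tvvar-clime}) — this is precisely where the two-sided constraint is needed, and is the other point that requires care since $\hat A_i$ is not thresholded and hence not itself sparse. Then (\ref{eqn:rate-instantaneous-spectral}) follows from $\rho(\Delta)^2\le|\Delta|_{\ell^1}|\Delta|_{\ell^\infty}$ and (\ref{eqn:rate-instantaneous-F}) from $|\Delta|_F^2\le\sum_j|[\Delta]_{j*}|_\infty|[\Delta]_{j*}|_1\le d\,|\Delta|_\infty|\hat A_i-A_i|_{\ell^1}$, after absorbing numerical factors into $C(\alpha)$.
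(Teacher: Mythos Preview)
Your proposal is correct and follows essentially the same route as the paper: a deviation bound for the smoothed autocovariances, then feasibility of $A_i^\top$ in (\ref{eqn:tvvar-clime}), then the entrywise bound via $|\Sigma_{i-1,0}^{-1}M|_\infty\le|\Sigma_{i-1,0}^{-1}|_{\ell^1}|M|_\infty$, then row/column $\ell^1$ bounds on $\hat A_i-A_i$ by the standard truncation argument on ${\cal G}_\alpha$ combined with the column-wise $\ell^1$-optimality of the sub-problems, and finally $\rho(M)^2\le|M|_{\ell^1}|M|_{\ell^\infty}$ and $|M|_F^2\le d\,|M|_\infty|M|_{\ell^1}$. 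The paper's proof is organized exactly this way; your chain for (\ref{eqn:rate-instantaneous-max}) and the $D(u)$-type splitting for (\ref{eqn:rate-instantaneous-spectral})--(\ref{eqn:rate-instantaneous-F}) match the paper's almost line for line.

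The one substantive difference is in the concentration step, which you rightly flag as the delicate part. The paper does \emph{not} invoke a Bernstein-type inequality for a weakly dependent sub-exponential array. Instead it exploits the linear structure directly: using the MA representation $\vx_i=\sum_{m\ge0}B_{i,m}\ve_{i-m}$ with $B_{i,m}=A_i\cdots A_{i-m+1}$, the centered weighted sum $\sum_m w(t,m)\{X_{mj}X_{mk}-\E[X_{mj}X_{mk}]\}$ is rewritten as a single quadratic form $\mbf\xi^\top(\tilde B^{(j)})^\top W_t\tilde B^{(k)}\mbf\xi-\tr(\cdot)$ in the stacked iid sub-Gaussian innovations $\mbf\xi$, and the Hanson--Wright inequality is applied. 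The Frobenius and spectral norms of the coefficient matrix are then controlled using $\sup_i\rho(B_{i,m})\le\bar\rho^{\,m}$ together with $\sum_m w(t,m)^2\asymp(nb_n)^{-1}$ and $\max_m w(t,m)\asymp(nb_n)^{-1}$, which produces exactly the tail $\exp\{-c\,nb_n\min(x^2,x)\}$ you anticipated. This route is cleaner than pursuing a generic Bernstein bound for the non-stationary triangular array, because it sidesteps having to quantify mixing or physical-dependence coefficients for the time-varying process; everything is reduced to concentration for a quadratic form in iid inputs. Your outline would also work, but making it rigorous for a \emph{locally stationary} (hence non-stationary) VAR with only $\bar\rho<1$ as a dependence hypothesis is more laborious than the Hanson--Wright shortcut.
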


From Theorem \ref{thm:rate-subGaussian-instantaneous}, the bandwidth $b_n \asymp ((\log{d})/n)^{-1/5}$ gives the optimal rate of convergence and the resulting tuning parameter $\tau \ge C (1+M_d) ((\log{d})/n)^{-2/5}$. When $d$ is fixed, it is known that the optimal bandwidth in the nonparametric kernel density estimation is $n^{-1/5}$ for twice continuously differentiable functions under the mean integrated square error. So in the high-dimensional context, the dimension only has a logarithm impact on the choice of optimal bandwidth.

\subsection{Pattern recovery}

We also study the recovery of time-varying patterns using the estimator (\ref{eqn:tvvar-clime}). Let $S_{i} = \supp(A_{i})$ be the nonzero positions of $A_{i}$. If the nonzero entries of $A_{i}$ are small enough, then it is impossible to accurately distinguish the small nonzeros and the zeros. Therefore, the best hope we can expect is that the nonzero entries of $A_{i}$ with large magnitudes can be well separated from the zeros in $A_i$. Let
\begin{equation}
\label{eqn:u_sharp}
u_\sharp = 2 \tau |\Sigma_{i-1,0}^{-1}|_{\ell^1},
\end{equation}
where $\tau$ is determined in Theorem \ref{thm:rate-subGaussian-instantaneous}. We use the thresholded version of (\ref{eqn:tvvar-clime}) as an estimator of $S_{i}$
\begin{equation}
\label{eqn:tvvar-clime-thresholded}
\hat{S}_{i} = T_{u_\sharp}(\hat{A}_{i}) = \{\vone(|\hat{A}_{i,jk}| > u_\sharp)\}_{j,k=1}^d.
\end{equation}
By Theorem \ref{thm:rate-subGaussian-instantaneous}, the maximal fluctuation $|\hat{A}_{i} - A_{i}|_\infty$ is controlled by $u_\sharp$ with probability at least $1 - 2 d^{-1}$. So if we apply a thresholding procedure for $\hat{A}_i$ at the level $u_\sharp$, then we expect that the zeros and non-zeros with magnitudes larger than $2 u_\sharp$ in $A_i$ can be identified by the thresholded support of $\hat{A}_i$. Precisely, we have the instantaneous {\it partial} recovery consistency.

\begin{thm}
\label{thm:partial-pattern-recovery}
Assume that Assumption 1,2,3 are satisfied. Then, for any fixed time points $i \in [nb_n+1, n(1-b_n)+1]$, $\Prob(\hat{S}_{i} \subset S_{i}) \ge 1 - 2 d^{-1}$ and $\Prob(\{ (m, k) : |A_{i,mk}| > 2 u_\sharp \} \subset \hat{S}_{i}) \ge 1 - 2 d^{-1}$.
\end{thm}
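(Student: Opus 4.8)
The plan is to deduce both inclusions from the entrywise bound established in Theorem~\ref{thm:rate-subGaussian-instantaneous}. Fix the time point $i$ in the admissible range and let $\mathcal{E}$ denote the event on which the conclusion of Theorem~\ref{thm:rate-subGaussian-instantaneous} holds; by \eqref{eqn:rate-instantaneous-max} together with the definition \eqref{eqn:u_sharp} of $u_\sharp$ we then have $|\hat{A}_i - A_i|_\infty \le 2\tau |\Sigma_{i-1,0}^{-1}|_{\ell^1} = u_\sharp$ on $\mathcal{E}$, with $\Prob(\mathcal{E}) \ge 1 - 2d^{-1}$. The remainder of the argument is deterministic and takes place on $\mathcal{E}$; both claimed inclusions hold simultaneously on this single event, which is why the same bound $1 - 2d^{-1}$ appears in both assertions.

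For the no-false-positive statement $\hat{S}_i \subset S_i$, I argue coordinatewise by contraposition. If $(j,k) \notin S_i$, i.e. $A_{i,jk} = 0$, then on $\mathcal{E}$
\[
|\hat{A}_{i,jk}| = |\hat{A}_{i,jk} - A_{i,jk}| \le |\hat{A}_i - A_i|_\infty \le u_\sharp,
\]
so the hard-thresholding rule \eqref{eqn:tvvar-clime-thresholded} sets $\vone(|\hat{A}_{i,jk}| > u_\sharp) = 0$, whence $(j,k) \notin \hat{S}_i$. Taking the contrapositive over all coordinates yields $\hat{S}_i \subset S_i$ on $\mathcal{E}$. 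For the detection of strong signals, suppose $(m,k)$ satisfies $|A_{i,mk}| > 2u_\sharp$. Then on $\mathcal{E}$, by the reverse triangle inequality,
\[
|\hat{A}_{i,mk}| \ge |A_{i,mk}| - |\hat{A}_{i,mk} - A_{i,mk}| \ge |A_{i,mk}| - u_\sharp > 2u_\sharp - u_\sharp = u_\sharp,
\]
so $\vone(|\hat{A}_{i,mk}| > u_\sharp) = 1$ and $(m,k) \in \hat{S}_i$; hence $\{(m,k) : |A_{i,mk}| > 2u_\sharp\} \subset \hat{S}_i$ on $\mathcal{E}$. Combining the two paragraphs on the event $\mathcal{E}$ completes the proof.

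There is no genuine analytical obstacle here: the entire difficulty is front-loaded into Theorem~\ref{thm:rate-subGaussian-instantaneous}, and the present statement is essentially a bookkeeping corollary of its sup-norm bound. The one point deserving attention is the matching of the threshold level to the fluctuation bound — choosing $u_\sharp$ to equal exactly the $|\cdot|_\infty$ error bound of Theorem~\ref{thm:rate-subGaussian-instantaneous} (rather than some other constant multiple) is precisely what makes both directions go through, with the factor-two separation $2u_\sharp$ being the minimal gap that survives the triangle inequality; one must also keep the strict versus non-strict inequalities in \eqref{eqn:tvvar-clime-thresholded} consistent so that the boundary case $|\hat{A}_{i,jk}| = u_\sharp$ is correctly excluded from $\hat{S}_i$.
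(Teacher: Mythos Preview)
Your proof is correct and follows essentially the same route as the paper's own argument: define the high-probability event $G=\{|\hat{A}_i-A_i|_\infty\le u_\sharp\}$ supplied by Theorem~\ref{thm:rate-subGaussian-instantaneous}, then on $G$ use the triangle inequality in each direction to rule out false positives and to guarantee detection of entries exceeding $2u_\sharp$. The paper's proof is simply a terser version of what you wrote.
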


Theorem \ref{thm:partial-pattern-recovery} states that, with high probability, the zeros in $A_i$ can be identified and the nonzero entries in $A_i$ with strong signal strength above $2 u_\sharp$ can be recovered by $\hat{S}_i$. Therefore, the false positives (type I error) of the estimator (\ref{eqn:tvvar-clime-thresholded}) are asymptotically controlled; see Theorem \ref{eqn:pattern-recovery-consistency} for precise statement. However, Theorem \ref{thm:partial-pattern-recovery} does not provide too much information regarding the false negatives since there is no characterization of the signal strength in $(0,2 u_\sharp)$.

Let $\beta > 0$ and $u_0 \in (0,1)$. We introduce the following $d \times d$ matrix class
%\begin{equation}
%{\cal G}_{\alpha, \beta}(s, M_d, L_d) = {\cal G}_{\alpha}(s, M_d) \cap \left\{ \Theta: { |\{(m, k) : |\Theta_{mk}| \in (0, u) \} | \over |\supp(\Theta)| } \le L_d u^\beta, \; \forall 0 < u < u_0 \right\}.
%\end{equation}

\begin{align}
&{\cal G}_{\alpha, \beta}(s, M_d, L_d)   \nonumber\\
=&{\cal G}_{\alpha}(s, M_d) \cap \left\{ \Theta: { |\{(m, k) : |\Theta_{mk}| \in (0, u) \} | \over |\supp(\Theta)| } \le L_d u^\beta, \; \forall 0 < u < u_0 \right\}.
\end{align}

For $A \in {\cal G}_{\alpha, \beta}(s, M_d, L_d)$, the parameters $\beta$ and $L_d$ control the proportion of small entries in the support of $A$. %and $u_0$ quantifies the strength of weak signals. 
If $\beta$ is large and $L_d$ grows slowly, then the fraction of weak signals in $A$ is small and therefore the false negatives (type II error) can also be well controlled. Below, we shall give such an example.

\begin{exmp}[A spatial design]
\label{exmp:spatial_A}
Let $0< r< 1$. Consider a $d\times d$ symmetric matrix $A=(A_{mk})_{d\times d}$ that is generated by the covariance function of a spatial process $Z_1,Z_2,...,Z_d$, which is a random vector that is observed at sites $h_1^\circ,...,h_d^\circ\in D$ in some spatial domain $D$. Assume that the covariance between $Z_m$ and $Z_k, m,k=1,\cdots,d$ satisfies
\begin{equation}
A_{mk} :=\text{cov}(Z_m,Z_k)=\left\{\begin{array}{ll}
f(\pi(h_m^\circ,h_k^\circ))&\pi(h_m^\circ,h_k^\circ)<\frac{d^{1-r}}{2}\\
0&\pi(h_m^\circ,h_k^\circ)\geq\frac{d^{1-r}}{2}
\end{array}
\right.,
\end{equation}
where $\pi(h_m^\circ,h_k^\circ)$ is the distance between the sites $h_m^\circ$ and $h_k^\circ$
\begin{align}
\pi(h_m^\circ,h_k^\circ)=\frac{|m-k|}{d^r}
\end{align}
and $f$ is a real-value covariance function. Here, we consider the rational quadratic covariance function \cite{chenxuwu2013,wikle2010general}
\begin{align}
f(\pi(h_m^\circ,h_k^\circ))=\frac{1}{(1+\pi(h_m^\circ,h_k^\circ)^2)^\gamma},\quad \gamma>1.
\end{align}
In this example, $A_{mk}=0$ if $|m-k| \ge d/2$, so $A$ is a banded matrix. %In this example, we assume $d^{2(1-r)}>4$.
\label{pattern_example}
\end{exmp}

\begin{lem}
\label{lem:spatial_A}
For $A$ in Example \ref{exmp:spatial_A}, then there exists a large enough constant $C > 0$ depending only on $r,\alpha,\gamma$ such that $A \in {\cal G}_{\alpha, \beta}(s, M_d, L_d)$, where
\begin{equation}
s=\left\{\begin{array}{lc}
Cd^r\log d& \text{if } 2\gamma\alpha=1\\
Cd^r& \text{if } 2\gamma\alpha>1\\
Cd^rd^{(1-r)(1-2\gamma\alpha)}&  \text{if } 0<2\gamma\alpha<1
\end{array}\right.,
\end{equation}
\begin{equation}
M_d=\left\{\begin{array}{lc}
Cd^r\log d& \text{if } \gamma=1/2\\
Cd^r& \text{if } \gamma>1/2\\
Cd^rd^{(1-r)(1-2\gamma)}& \text{if }  0<\gamma<1/2
\end{array}\right.,
\end{equation}
and $L_d=C d^{2(1-r)\gamma\beta}$. 
\end{lem}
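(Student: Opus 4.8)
The plan is to reduce each of the three asymptotic formulas to an elementary one-dimensional sum estimate, exploiting that the matrix $A$ of Example~\ref{exmp:spatial_A} is symmetric and banded with entries that depend only on $|m-k|$. \emph{Reductions.} Since $A$ is symmetric, $\mathcal{G}_\alpha^{\vert}(s,M_d) = \mathcal{G}_\alpha^{-}(s,M_d)$, so it suffices to control, uniformly in $j \le d$, the row sums $\sum_{k=1}^d |A_{jk}|^\alpha$ (which governs $s$) and $\sum_{k=1}^d |A_{jk}|$ (which governs $M_d$). By construction $A_{jk}\neq 0$ precisely when $|j-k| < d/2$, and there $|A_{jk}| = g_\theta(|j-k|)$ with $g_\theta(t) := (1 + (t/d^r)^2)^{-\theta}$, where $\theta = \gamma\alpha$ for the $\alpha$th-power sums and $\theta = \gamma$ for the absolute-value sums. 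Since $g_\theta$ is nonincreasing on $[0,\infty)$ with $g_\theta(0)=1$, a sum-to-integral comparison gives
\[
\max_{j\le d}\sum_{k:\,|j-k|<d/2} g_\theta(|j-k|) \;\le\; 1 + 2\int_0^{d} g_\theta(t)\,dt \;=\; 1 + 2d^{r}\int_0^{d^{1-r}}\frac{ds}{(1+s^2)^{\theta}},
\]
after the substitution $s = t/d^r$.

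\emph{The formulas for $s$ and $M_d$.} It then remains to evaluate $I_\theta(M) := \int_0^M (1+s^2)^{-\theta}\,ds$ at $M\asymp d^{1-r}$: if $2\theta>1$ then $I_\theta(M)\le I_\theta(\infty)<\infty$; if $2\theta=1$ then $I_\theta(M)=\log(M+\sqrt{M^2+1})\asymp\log d$; and if $0\le 2\theta<1$ then $I_\theta(M)\asymp M^{1-2\theta}=d^{(1-r)(1-2\theta)}$. Multiplying through by $d^r$ and setting $\theta=\gamma\alpha$ produces the three displayed cases for $s$ (according as $2\gamma\alpha$ is $>1$, $=1$, or $<1$, the last also covering $\alpha=0$), and setting $\theta=\gamma$ produces the three displayed cases for $M_d$ (according as $\gamma$ is $>1/2$, $=1/2$, or $<1/2$), with every leading constant depending only on $r,\alpha,\gamma$.

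\emph{The formula for $L_d$.} Here I would not try to count the small entries of $A$ precisely, but instead use a uniform lower bound on the magnitudes of its nonzero entries. If $0 < |m-k| < d/2$ then $|m-k|\le d$, so $|A_{mk}| = (1 + (|m-k|/d^r)^2)^{-\gamma} \ge (1 + d^{2(1-r)})^{-\gamma} \ge 2^{-\gamma}d^{-2\gamma(1-r)} =: u_\ast$. Hence $\{(m,k):|A_{mk}|\in(0,u)\}=\emptyset$ whenever $u\le u_\ast$, so the inequality in the definition of $\mathcal{G}_{\alpha,\beta}$ holds vacuously for such $u$; and for $u_\ast<u<u_0$ the ratio on its left is at most $1$ while, with $L_d = Cd^{2(1-r)\gamma\beta}$, the right side obeys $L_d u^\beta > L_d u_\ast^\beta = C\,2^{-\gamma\beta}$. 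Choosing $C\ge 2^{\gamma\beta}$ (so that $C$ now also depends on $\beta$) gives $L_d u^\beta\ge 1\ge$ ratio, completing the verification.

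\emph{Main obstacle.} No step is deep; the only place needing genuine care is the integral estimate $I_\theta(M)$ across its three regimes — in particular the critical case $2\theta=1$ that generates the extra $\log d$ factors — together with the bookkeeping that combines the prefactor $d^r$ with the cut-off $M\asymp d^{1-r}$ so as to assemble exactly $d^r$, $d^r\log d$, or $d^{r}d^{(1-r)(1-2\theta)}$. The symmetry reduction, the monotone sum-to-integral comparison, and the trivial lower bound on the nonzero entries that disposes of $L_d$ are all routine.
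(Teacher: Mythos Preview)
Your proposal is correct and follows essentially the same route as the paper: symmetry reduces to a single row sum, a monotone sum-to-integral comparison with the substitution $s=t/d^r$ produces $d^r\int_0^{d^{1-r}}(1+s^2)^{-\theta}ds$, and the three regimes of this integral give the three cases for $s$ (with $\theta=\gamma\alpha$) and for $M_d$ (with $\theta=\gamma$); for $L_d$ both you and the paper exploit that the nonzero entries of $A$ are uniformly bounded below by a quantity of order $d^{-2(1-r)\gamma}$, so the weak-signal count vanishes below that threshold and $L_d u^\beta\ge 1$ suffices above it. Your parenthetical remark that the constant in $L_d$ must also depend on $\beta$ is a fair correction to the lemma's stated hypothesis on $C$.
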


For any fixed distance parameter $r \in (0,1)$, the weak signal parameter $L_d$ has a natural dependence on $\gamma$. If $\gamma$ is smaller, then the covariance function $f$ decays to zero slower and there is a less fraction of weak signals in $A$. This can allow $L_d$ to grow slowly in $d$. Note that class ${\cal G}_{\alpha, \beta}(s, M_d, L_d)$ is much less stringent than the widely used condition for support recovery and model selection in literature, which requires that the minimal nonzero signal strength is uniformly bounded away from zero \cite{ravikumarwainwrightraskuttiyu2008a}. To quantify the error in the pattern recovery, we use the following two error rate measures.
\begin{defn}
\label{defn:pattern-recovery-error-rate}
The false positive rate (FPR) and false negative rate (FNR) of $\hat{S}_{i}$ are defined as
\begin{equation}
\label{eqn:fpr+fnr}
\FPR_i = { |\hat{S}_{i} \cap S_{i}^c| \over |S_{i}^c| }, \quad \FNR_i = { |\hat{S}_{i}^c \cap S_{i}| \over |S_{i}| }.
\end{equation}
By convention, if $S_i^c=\emptyset$, then $\FPR_i=0$; if $S_i=\emptyset$, then $\FNR_i=0$.
\end{defn}
If $\FPR_i = \FNR_i = 0$ with probability tending to one, which is a very strong requirement, then we have the pattern recovery consistency $\Prob(\hat{S}_{i} = S_{i}) \to 1$. Below, we show that both FPR and FNR are asymptotically controlled in the presence of weak signals.

\begin{thm}
\label{eqn:pattern-recovery-consistency}
Assume that Assumption 1,2,3 are satisfied. Fix an $i \in [nb_n+1, n(1-b_n)+1]$. If $A_i \in {\cal G}_{\alpha}(s, M_d) $, then we have $\Prob(\FPR_i = 0) \ge 1 - 2 d^{-1}$. If in addtiion $A_{i,m} \in {\cal G}_{\alpha, \beta}(s, M_d, L_d)$, then we also have $\Prob(\FNR_i \le 2^\beta L_d u_\sharp^\beta) \ge 1 - 2 d^{-1}$.
\end{thm}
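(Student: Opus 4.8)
The plan is to read off both assertions from Theorem~\ref{thm:partial-pattern-recovery} --- or, equivalently and more transparently, from the entrywise bound \eqref{eqn:rate-instantaneous-max} of Theorem~\ref{thm:rate-subGaussian-instantaneous} --- together with the defining inequality of the class ${\cal G}_{\alpha,\beta}(s,M_d,L_d)$. No new concentration estimate is needed: all the probabilistic content is inherited from Theorem~\ref{thm:rate-subGaussian-instantaneous}.

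First I would work on the good event $E_i := \{\, |\hat{A}_i - A_i|_\infty \le u_\sharp \,\}$, which by \eqref{eqn:rate-instantaneous-max} and the definition \eqref{eqn:u_sharp} of $u_\sharp$ satisfies $\Prob(E_i) \ge 1 - 2d^{-1}$. On $E_i$ both inclusions of Theorem~\ref{thm:partial-pattern-recovery} hold: if $A_{i,mk} = 0$ then $|\hat{A}_{i,mk}| = |\hat{A}_{i,mk} - A_{i,mk}| \le u_\sharp$, so $(m,k) \notin \hat{S}_i$ and hence $\hat{S}_i \subset S_i$; and if $|A_{i,mk}| > 2u_\sharp$ then $|\hat{A}_{i,mk}| \ge |A_{i,mk}| - u_\sharp > u_\sharp$, so $(m,k) \in \hat{S}_i$, giving $\{(m,k) : |A_{i,mk}| > 2u_\sharp\} \subset \hat{S}_i$.

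For the FPR claim, $\hat{S}_i \subset S_i$ yields $\hat{S}_i \cap S_i^c = \emptyset$, so by \eqref{eqn:fpr+fnr} (with the convention of Definition~\ref{defn:pattern-recovery-error-rate} covering $S_i^c = \emptyset$) we get $\FPR_i = 0$ on $E_i$; this uses only $A_i \in {\cal G}_\alpha(s,M_d)$, so $\Prob(\FPR_i = 0) \ge \Prob(E_i) \ge 1 - 2d^{-1}$. For the FNR claim, assuming in addition $A_i \in {\cal G}_{\alpha,\beta}(s,M_d,L_d)$, the second inclusion gives $\hat{S}_i^c \subset \{(m,k) : |A_{i,mk}| \le 2u_\sharp\}$ on $E_i$, hence $\hat{S}_i^c \cap S_i \subset \{(m,k) : 0 < |A_{i,mk}| \le 2u_\sharp\}$. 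In the asymptotic regime $u_\sharp \to 0$, so $2u_\sharp < u_0$ for $n$ large, and applying the class inequality for every $u \in (2u_\sharp, u_0)$ and letting $u \downarrow 2u_\sharp$ (using continuity of $u \mapsto L_d u^\beta$) bounds $|\hat{S}_i^c \cap S_i| \le L_d (2u_\sharp)^\beta |\supp(A_i)| = 2^\beta L_d u_\sharp^\beta |S_i|$. Dividing by $|S_i|$ (the case $S_i = \emptyset$ being trivial) gives $\FNR_i \le 2^\beta L_d u_\sharp^\beta$ on $E_i$, whence $\Prob(\FNR_i \le 2^\beta L_d u_\sharp^\beta) \ge \Prob(E_i) \ge 1 - 2d^{-1}$.

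Because the whole argument is essentially bookkeeping on top of Theorems~\ref{thm:rate-subGaussian-instantaneous} and \ref{thm:partial-pattern-recovery}, I do not expect a genuine obstacle. The only places needing mild care are the passage from the open interval $(0,u)$ appearing in the definition of ${\cal G}_{\alpha,\beta}$ to the closed interval $(0, 2u_\sharp]$ produced by thresholding, handled by the limiting step above, and the implicit requirement $2u_\sharp < u_0$, which is automatic for all sufficiently large $n$ under the tuning-parameter scaling $\tau \asymp (1 + M_d)(b_n^2 + \sqrt{(\log d)/(nb_n)})$ of Theorem~\ref{thm:rate-subGaussian-instantaneous}.
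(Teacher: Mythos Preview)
Your proposal is correct and follows essentially the same route as the paper: both arguments work on the high-probability event $\{|\hat A_i - A_i|_\infty \le u_\sharp\}$ furnished by Theorem~\ref{thm:rate-subGaussian-instantaneous}, derive $\hat S_i \subset S_i$ for the FPR claim, and for the FNR claim show $\hat S_i^c \cap S_i \subset \{(m,k): 0 < |A_{i,mk}| \le 2u_\sharp\}$ and then invoke the defining inequality of ${\cal G}_{\alpha,\beta}(s,M_d,L_d)$. Your treatment of the open-versus-closed interval issue and the requirement $2u_\sharp < u_0$ is in fact slightly more careful than the paper's, which applies the class bound directly without comment.
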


Since $u_\sharp = o(1)$, the FNR vanishes with probability tending to one if $L_d = o(u_\sharp^{-\beta})$.

\section{Simulation studies}
\label{sec:simulation}
In this section, we present some numerical results on simulated datasets. We compare the following five methods:
\begin{enumerate}[label=(\roman*)]
\item Our TV-VAR estimator (\ref{eqn:tvvar-clime}).
\item The stationary vector autoregressive model (Stat. VAR) \cite{hanluliu2015}.
\item The time-varying lasso method \cite{hsuhungchang2008a}.
\item The time-varying ridge method \cite{hamilton1994}.
\item The time-varying maximum likelihood estimator (MLE).
\end{enumerate}
Methods (iii), (iv) and (v) are extensions of the lasso \cite{hsuhungchang2008a}, ridge regression \cite{hamilton1994} and MLE to the time-varying setting by kernel smoothing. %We treat (iv) and (v) as the baseline in order to see how bad the classic methods will be under high-dimensional setting. 
We include (ii), i.e. the stationary VAR model, as the baseline method that ignores the dynamic features in the transition matrices. (iii) is a competitor of (i). We solve (iii) with the FISTA \cite{beck2009fast} and we solve (i) and (ii) with the simplex algorithm \cite{murty1983linear}.

\subsection{Data generation}
\label{subsec:data}
We consider different setups of $n = 100$ and $d = 20, 30, 40$ and $50$. For each setup $(n,d)$, the data are generated by the following procedure. First, the baseline coefficient matrices $A_{01}$ and $A_{02}$ are generated by using the \texttt{sugm.generator()} in \texttt{flare} R package \cite{lizhaoyuanliu2014}.
We consider four graph structures defined in \texttt{flare}: hub, cluster, band and random for $A_{01}$ and $A_{02}$. Examples of these four structures are shown in Appendix \ref{four_sparse_patterns}. Then we normalize $\rho(A_{01}) = 0.2$, $\rho(A_{02}) = 1$ and smoothly interpolate on the intermediate values
\begin{equation*}
A_{i} =(1-t_i)^4 A_{01} + t_i^2 A_{02}, \qquad t_i = i/n \text{ for } i = 1, \cdots,n.
\end{equation*}
Following \cite{hanluliu2015}, we specify the innovation covariance matrix $\Psi = \Sigma - A_{01} \Sigma A_{01}^\top$ where $\Sigma = I_d$. \cite{hanluliu2015} showed that the choice of $\Sigma$ does not affect the numeric performance significantly. In our simulation studies, we use the Epanechnikov kernel $K(v) = 0.75(1-v^2)\I(|v| \le 1)$ and fix the model order $k=1$. The bandwidth $b_n$ is set to be $b_n=0.8n^{-1/5}$ for (i), (iii) and (iv).

\subsection{Tuning parameter selection}
\label{subsec:cv}
For the tuning parameter selection in (i)--(iv), we propose a data-driven procedure that minimizes the one-step-ahead prediction errors as follows.
\begin{enumerate}
\item Choose a grid for the tuning parameter (say $\tau$) and the number $n_1$ of training data points. %In our simulation, we try 30 possible values from $10^{-3}$ to 1 for $\tau$ and $10^{-3}$ to $2$ for $\lambda$.
%\item Split the time series into two part: first $n_1$ data as training set and the other $n-n_1$ data as testing set. In our simulation, we set $n_1=0.5n$.
\item For each $\tau$, perform the one-step-ahead prediction on the testing set by estimating $A_t$ with $\hat{A}_t(\tau)$ and then predict $X_t$ by $\hat{X}_t=\hat{A}_t(\tau)X_{t-1}$, where $t=n_1+1,...,n$. Then calculate the prediction error at time $t$ as $Err_t(\tau)=||X_t-\hat{A}_t(\tau)X_{t-1}||_2$.
\item Calculate the average errors over the last $n-n_1$ time points
$$\overline{Err(\tau)}=\frac{1}{n-n_1}\sum_{t=n_1+1}^{n}Err_t(\tau)$$
\item Select $\hat\tau$ that minimizes $\overline{Err(\tau)}$.
\end{enumerate}

\subsection{Comparison results}
\label{subsec:results}

\subsubsection{Estimation errors}
For each setup, we run 100 simulations and for each simulation indexed by $nn$ we estimate the transition matrix at each time point indexed by $t$, where $t=1,..,n$. Then, we calculate the error $err_{t,nn}=|\hat{A}_{t,nn}-A_t|_\rho$ for $t=1,...,n$ and $nn=1,...,100$. We also report errors under $\ell_F$, $\ell_1$ and $\ell_\infty$ norms. Next, the averaged errors over different time points are aggregated as $\overline{err}_{nn}=\frac{1}{b-a}\sum_{t=a}^berr_{t,nn}$ where $a=[nb_n]+1$ and $b=[n(1-b_n)]-1$. Finally, the averaged errors are reported over 100 simulations as $\overline{err}=\frac{1}{100}\sum_{nn=1}^{100}err_{nn}$.  The results are shown in Table \ref{tab:err_hub}, \ref{tab:err_cluster}, \ref{tab:err_band} and \ref{tab:err_random}.

From Table \ref{tab:err_hub} to Table \ref{tab:err_random}, larger $d$ often results in larger errors. In general, the unstructured MLE performs the worst almost under all matrix norms. Although the ridge method shrinks the coefficients in the transition matrix to zeros, the values of those coefficients are not exactly zeros. Thus, the estimated transition matrices by the ridge method are not sparse which lead to higher estimation errors compared with the TV-VAR,  time-varying lasso and stationary VAR. Stationary VAR always perform worse than TV-VAR and time-varying lasso, since it cannot capture the dynamic structure. The time-varying lasso performs better than any other methods except TV-VAR. The proposed TV-VAR performs the best almost under all matrix norms.

%%%%%%%%%%%%%%%%%%%%%%
\subsubsection{Pattern recovery}
For the TV-VAR, we also report the $\FPR_{t,l,nn}$ and $\FNR_{t,l,nn}$ in (\ref{eqn:fpr+fnr}), where $l=1,..,30$ which are the indexes of 30 possible tuning parameters from 0.001 to 0.45. Then, we calculate the averaged $\FPR_l=\frac{1}{100}\sum_{nn=1}^{100}\{\frac{1}{b-a}\sum_{t=a}^b\FPR_{t,l,nn}\}$ and $\FNR_l=\frac{1}{100}\sum_{nn=1}^{100}\{\frac{1}{b-a}\sum_{t=a}^b\FNR_{t,l,nn}\}$. Following \cite{cailiuluo2011a}, we set the $u_\sharp$ to $10^{-3}$, which is considered to be numerical nonzero. The ROC curves for all possible values of the sparsity control parameter are plotted in Fig \ref{fig:roc_hub_fix_tau}, Fig \ref{fig:roc_cluster_fix_tau}, Fig \ref{fig:roc_band_fix_tau} and Fig \ref{fig:roc_random_fix_tau}. Based on the ROC curves, the TV-VAR method has better discrimination power for band or random patterns than hub or cluster patterns.

We also calculate $u_\sharp$ by using the true value of $\Sigma$ in (\ref{eqn:u_sharp}) and the resulting ROC curves are shown in Appendix \ref{App:true_ROC}. Those ROC curves are similar to those based on $u_\sharp = 10^{-3}$.

% Table generated by Excel2LaTeX from sheet 'hub'
\begin{table}[htbp]
	 \centering
	 \caption{Comparison of estimation errors under different setups. The standard deviations are shown in parentheses. Here $\ell_\rho$, $\ell_F$, $\ell_1$ and $\ell_\infty$ are the spectral, Frobenius, $\ell_1$ and $\ell_\infty$ matrix norm resp. The pattern of transition matrix is `hub'.}
	\begin{tabular}{rrrrrr}
		\toprule
		& \multicolumn{5}{c}{d=20} \\
		\cmidrule(lr){2-6}
		  & TV-VAR    & Stat. VAR  & Lasso & Ridge & MLE \\ \midrule
		$\ell_\infty$   & \textbf{0.407} & 1.102 & 0.453 & 1.530 & 2.770 \\
		& (0.083) & (0.248) & (0.068) & (0.075) & (0.205) \\
		$\ell_1$    & \textbf{0.395} & 1.747 & 0.446 & 1.532 & 2.762 \\
		& (0.088) & (0.516) & (0.076) & (0.068) & (0.255) \\
		$\ell_\rho$  & \textbf{0.329} & 0.778 & 0.348 & 0.604 & 1.157 \\
		& (0.038) & (0.147) & (0.032) & (0.022) & (0.08) \\
		$\ell_F$     & 0.239 & 0.292 & \textbf{0.228} & 0.342 & 0.568 \\
		& (0.011) & (0.032) & (0.011) & (0.008) & (0.02) \\ \midrule
		& \multicolumn{5}{c}{d=30} \\ \cmidrule(lr){2-6}
		  & TV-VAR    & Stat. VAR  & Lasso & Ridge & MLE \\ \midrule
		$\ell_\infty$   & \textbf{0.643} & 1.154 & 0.695 & 2.116 & 4.350 \\
		& (0.125) & (0.295) & (0.111) & (0.087) & (0.302) \\
		$\ell_1$    & \textbf{0.694} & 2.197 & 0.732 & 2.102 & 4.302 \\
		& (0.135) & (0.652) & (0.109) & (0.073) & (0.302) \\
		$\ell_\rho$  & \textbf{0.430} & 0.846 & 0.443 & 0.710 & 1.607 \\
		& (0.046) & (0.141) & (0.04) & (0.024) & (0.112) \\
		$\ell_F$    & 0.245 & 0.288 & \textbf{0.244} & 0.393 & 0.729 \\
		& (0.013) & (0.026) & (0.014) & (0.006) & (0.02) \\ \midrule
		& \multicolumn{5}{c}{d=40} \\ \cmidrule(lr){2-6}
		  & TV-VAR    & Stat. VAR  & Lasso & Ridge & MLE \\ \midrule
		$\ell_\infty$   & \textbf{0.711} & 1.221 & 0.800 & 2.594 & 6.183 \\
		& (0.103) & (0.256) & (0.095) & (0.079) & (0.426) \\
		$\ell_1$    & \textbf{0.812} & 2.868 & 0.872 & 2.586 & 6.198 \\
		& (0.144) & (0.775) & (0.126) & (0.082) & (0.406) \\
		$\ell_\rho$  & \textbf{0.460} & 0.958 & 0.486 & 0.793 & 2.168 \\
		& (0.041) & (0.124) & (0.035) & (0.024) & (0.142) \\
		$\ell_F$     & \textbf{0.253} & 0.296 & 0.259 & 0.429 & 0.907 \\
		& (0.011) & (0.02) & (0.011) & (0.006) & (0.021) \\ \midrule
		& \multicolumn{5}{c}{d=50} \\ \cmidrule(lr){2-6}
		   & TV-VAR    & Stat. VAR  & Lasso & Ridge & MLE \\ \midrule
		$\ell_\infty$   & \textbf{0.827} & 1.158 & 0.944 & 3.043 & 8.478 \\
		& (0.124) & (0.219) & (0.102) & (0.095) & (0.633) \\
		$\ell_1$    & \textbf{1.019} & 3.574 & 1.072 & 3.055 & 8.558 \\
		& (0.19) & (1.021) & (0.135) & (0.098) & (0.65) \\
		$\ell_\rho$  & \textbf{0.504} & 1.052 & 0.531 & 0.863 & 2.874 \\
		& (0.045) & (0.157) & (0.032) & (0.021) & (0.224) \\
		$\ell_F$     & \textbf{0.266} & 0.297 & 0.278 & 0.454 & 1.108 \\
		& (0.014) & (0.019) & (0.012) & (0.006) & (0.027) \\
		\bottomrule
	\end{tabular}%
	\label{tab:err_hub}%
\end{table}%

% Table generated by Excel2LaTeX from sheet 'cluster'
\begin{table}[htbp]
	\centering
	\caption{Comparison of estimation errors under different setups. The standard deviations are shown in parentheses. Here $\ell_\rho$, $\ell_F$, $\ell_1$ and $\ell_\infty$ are the spectral, Frobenius, $\ell_1$ and $\ell_\infty$ matrix norm resp. The pattern of transition matrix is `cluster'.}
	\begin{tabular}{rrrrrr}
		\toprule
		& \multicolumn{5}{c}{d=20} \\\cmidrule(lr){2-6}
		                 & TV-VAR & Stat. VAR & Lasso & Ridge & MLE \\ \midrule
		$\ell_\infty$   & \textbf{0.400} & 1.093 & 0.465 & 1.527 & 2.736 \\
		& (0.073) & (0.300) & (0.084) & (0.081) & (0.202) \\
		$\ell_1$    & \textbf{0.383} & 1.695 & 0.448 & 1.527 & 2.731 \\
		& (0.074) & (0.506) & (0.077) & (0.083) & (0.251) \\
		$\ell_\rho$  & \textbf{0.324} & 0.774 & 0.352 & 0.601 & 1.133 \\
		& (0.032) & (0.143) & (0.033) & (0.025) & (0.084) \\
		$\ell_F$     & 0.239 & 0.291 & \textbf{0.228} & 0.341 & 0.562 \\
		& (0.011) & (0.03) & (0.011) & (0.008) & (0.019) \\ \midrule
		& \multicolumn{5}{c}{d=30} \\ \cmidrule(lr){2-6}
		                & TV-VAR & Stat. VAR & Lasso & Ridge & MLE \\ \midrule
		$\ell_\infty$   & \textbf{0.557} & 1.127 & 0.631 & 2.095 & 4.305 \\
		& (0.096) & (0.285) & (0.097) & (0.083) & (0.308) \\
		$\ell_1$    & \textbf{0.589} & 2.236 & 0.664 & 2.120 & 4.343 \\
		& (0.132) & (0.659) & (0.110) & (0.093) & (0.297) \\
		$\ell_\rho$  & \textbf{0.399} & 0.848 & 0.424 & 0.710 & 1.607 \\
		& (0.045) & (0.135) & (0.038) & (0.024) & (0.112) \\
		$\ell_F$     & 0.241 & 0.289 & \textbf{0.239} & 0.393 & 0.729 \\
		& (0.009) & (0.027) & (0.010) & (0.006) & (0.020) \\ \midrule
		& \multicolumn{5}{c}{d=40} \\ \cmidrule(lr){2-6}
		                      & TV-VAR & Stat. VAR & Lasso & Ridge & MLE \\ \midrule
		$\ell_\infty$   & \textbf{0.705} & 1.139 & 0.783 & 2.612 & 6.179 \\
		& (0.115) & (0.225) & (0.108) & (0.106) & (0.44) \\
		$\ell_1$    & \textbf{0.803} & 2.986 & 0.847 & 2.596 & 6.171 \\
		& (0.153) & (0.874) & (0.133) & (0.098) & (0.452) \\
		$\ell_\rho$  & \textbf{0.460} & 0.974 & 0.479 & 0.794 & 2.158 \\
		& (0.042) & (0.15) & (0.037) & (0.022) & (0.156) \\
		$\ell_F$     & \textbf{0.253} & 0.295 & 0.258 & 0.428 & 0.906 \\
		& (0.011) & (0.022) & (0.012) & (0.006) & (0.022) \\ \midrule
		& \multicolumn{5}{c}{d=50} \\ \cmidrule(lr){2-6}
		                      & TV-VAR & Stat. VAR & Lasso & Ridge & MLE \\ \midrule
		$\ell_\infty$   & \textbf{0.841} & 1.116 & 0.964 & 3.052 & 8.593 \\
		& (0.121) & (0.198) & (0.108) & (0.106) & (0.703) \\
		$\ell_1$    & \textbf{1.023} & 3.384 & 1.070 & 3.043 & 8.498 \\
		& (0.177) & (1.055) & (0.152) & (0.09) & (0.558) \\
		$\ell_\rho$  & \textbf{0.504} & 1.015 & 0.531 & 0.863 & 2.874 \\
		& (0.04) & (0.162) & (0.036) & (0.021) & (0.224) \\
		$\ell_F$     & \textbf{0.267} & 0.292 & 0.278 & 0.454 & 1.108 \\
		& (0.013) & (0.019) & (0.013) & (0.006) & (0.027) \\
		\bottomrule
	\end{tabular}%
	\label{tab:err_cluster}%
\end{table}%

% Table generated by Excel2LaTeX from sheet 'band'
\begin{table}[htbp]
	\centering
	\caption{Comparison of estimation errors under different setups. The standard deviations are shown in parentheses. Here $\ell_\rho$, $\ell_F$, $\ell_1$ and $\ell_\infty$ are the spectral, Frobenius, $\ell_1$ and $\ell_\infty$ matrix norm resp. The pattern of transition matrix is `band'.}
	\begin{tabular}{rrrrrr}
		\toprule
		& \multicolumn{5}{c}{d=20} \\\cmidrule(lr){2-6}
		& TV-VAR & Stat. VAR & Lasso & Ridge & MLE \\\midrule
		$\ell_\infty$   & \textbf{0.402} & 1.048 & 0.453 & 1.532 & 2.738 \\
		& (0.072) & (0.251) & (0.068) & (0.08) & (0.188) \\
		$\ell_1$    & \textbf{0.385} & 1.645 & 0.437 & 1.531 & 2.707 \\
		& (0.075) & (0.476) & (0.077) & (0.068) & (0.183) \\
		$\ell_\rho$  & \textbf{0.326} & 0.758 & 0.347 & 0.600 & 1.132 \\
		& (0.033) & (0.137) & (0.032) & (0.021) & (0.072) \\
		$\ell_F$     & 0.237 & 0.289 & \textbf{0.226} & 0.341 & 0.563 \\
		& (0.011) & (0.031) & (0.01) & (0.008) & (0.019) \\\midrule
		& \multicolumn{5}{c}{d=30} \\\cmidrule(lr){2-6}
		& TV-VAR & Stat. VAR & Lasso & Ridge & MLE \\\midrule
		$\ell_\infty$  & \textbf{0.574} & 1.139 & 0.637 & 1.537 & 4.316 \\
		& (0.094) & (0.319) & (0.101) & (0.058) & (0.314) \\
		$\ell_1$    & \textbf{0.590} & 2.306 & 0.654 & 1.537 & 4.312 \\
		& (0.131) & (0.727) & (0.123) & (0.057) & (0.323) \\
		$\ell_\rho$  & \textbf{0.401} & 0.863 & 0.422 & 0.537 & 1.607 \\
		& (0.041) & (0.154) & (0.038) & (0.019) & (0.112) \\
		$\ell_F$     & 0.240 & 0.290 & \textbf{0.239} & 0.315 & 0.729 \\
		& (0.009) & (0.028) & (0.011) & (0.005) & (0.02) \\\midrule
		& \multicolumn{5}{c}{d=40} \\\cmidrule(lr){2-6}
		& TV-VAR & Stat. VAR & Lasso & Ridge & MLE \\\midrule
		$\ell_\infty$  & \textbf{0.800} & 1.170 & 0.886 & 2.590 & 6.198 \\
		& (0.134) & (0.253) & (0.113) & (0.098) & (0.429) \\
		$\ell_1$   & \textbf{0.925} & 2.772 & 0.964 & 2.597 & 6.221 \\
		& (0.155) & (0.825) & (0.127) & (0.09) & (0.457) \\
		$\ell_\rho$  & \textbf{0.489} & 0.938 & 0.510 & 0.793 & 2.168 \\
		& (0.041) & (0.138) & (0.035) & (0.024) & (0.142) \\
		$\ell_F$     & \textbf{0.262} & 0.293 & 0.270 & 0.429 & 0.907 \\
		& (0.013) & (0.022) & (0.012) & (0.006) & (0.021) \\\midrule
		& \multicolumn{5}{c}{d=50} \\\cmidrule(lr){2-6}
		& TV-VAR & Stat. VAR & Lasso & Ridge & MLE \\\midrule
		$\ell_\infty$   & \textbf{0.839} & 1.143 & 0.992 & 3.056 & 8.550 \\
		& (0.125) & (0.222) & (0.118) & (0.100) & (0.579) \\
		$\ell_1$    & \textbf{1.011} & 3.673 & 1.105 & 3.047 & 8.635 \\
		& (0.175) & (1.212) & (0.126) & (0.099) & (0.678) \\
		$\ell_\rho$  & \textbf{0.506} & 1.047 & 0.541 & 0.862 & 2.872 \\
		& (0.045) & (0.176) & (0.034) & (0.021) & (0.223) \\
		$\ell_F$     & \textbf{0.267} & 0.295 & 0.280 & 0.454 & 1.108 \\
		& (0.014) & (0.02) & (0.013) & (0.005) & (0.027) \\
		\bottomrule
	\end{tabular}%
	\label{tab:err_band}%
\end{table}%

% Table generated by Excel2LaTeX from sheet 'random'
\begin{table}[htbp]
	\centering
	\caption{Comparison of estimation errors under different setups. The standard deviations are shown in parentheses. Here $\ell_\rho$, $\ell_F$, $\ell_1$ and $\ell_\infty$ are the spectral, Frobenius, $\ell_1$ and $\ell_\infty$ matrix norm resp. The pattern of transition matrix is `random'.}
	\begin{tabular}{rrrrrr}
		\toprule
		& \multicolumn{5}{c}{d=20} \\\cmidrule(lr){2-6}
		& TV-VAR & Stat. VAR & Lasso & Ridge & MLE \\\midrule
		$\ell_\infty$   & \textbf{0.397} & 1.060 & 0.447 & 1.538 & 2.704 \\
		& (0.078) & (0.244) & (0.07) & (0.071) & (0.188) \\
		$\ell_1$    & \textbf{0.379} & 1.661 & 0.433 & 1.532 & 2.762 \\
		& (0.080) & (0.574) & (0.081) & (0.077) & (0.235) \\
		$\ell_\rho$  & \textbf{0.323} & 0.757 & 0.344 & 0.600 & 1.132 \\
		& (0.036) & (0.140) & (0.033) & (0.021) & (0.072) \\
		$\ell_F$     & 0.237 & 0.286 & \textbf{0.227} & 0.341 & 0.563 \\
		& (0.013) & (0.031) & (0.012) & (0.008) & (0.019) \\\midrule
		& \multicolumn{5}{c}{d=30} \\\cmidrule(lr){2-6}
		& TV-VAR & Stat. VAR & Lasso & Ridge & MLE \\\midrule
		$\ell_\infty$   & \textbf{0.567} & 1.138 & 0.641 & 1.541 & 4.294 \\
		& (0.103) & (0.264) & (0.102) & (0.066) & (0.292) \\
		$\ell_1$    & \textbf{0.590} & 2.280 & 0.658 & 1.543 & 4.292 \\
		& (0.132) & (0.724) & (0.119) & (0.062) & (0.268) \\
		$\ell_\rho$  & \textbf{0.398} & 0.865 & 0.423 & 0.537 & 1.607 \\
		& (0.044) & (0.146) & (0.041) & (0.019) & (0.112) \\
		$\ell_F$     & 0.240 & 0.290 & \textbf{0.239} & 0.315 & 0.729 \\
		& (0.01) & (0.026) & (0.012) & (0.005) & (0.020) \\\midrule
		& \multicolumn{5}{c}{d=40} \\\cmidrule(lr){2-6}
		& TV-VAR & Stat. VAR & Lasso & Ridge & MLE \\\midrule
		$\ell_\infty$   & \textbf{0.713} & 1.210 & 0.810 & 2.595 & 6.148 \\
		& (0.098) & (0.268) & (0.104) & (0.096) & (0.413) \\
		$\ell_1$    & \textbf{0.813} & 2.904 & 0.869 & 2.600 & 6.205 \\
		& (0.136) & (0.851) & (0.128) & (0.09) & (0.432) \\
		$\ell_\rho$  & \textbf{0.458} & 0.951 & 0.482 & 0.793 & 2.168 \\
		& (0.036) & (0.142) & (0.034) & (0.024) & (0.142) \\
		$\ell_F$     & \textbf{0.252} & 0.296 & 0.260 & 0.429 & 0.907 \\
		& (0.01) & (0.021) & (0.012) & (0.006) & (0.021) \\\midrule
		& \multicolumn{5}{c}{d=50} \\\cmidrule(lr){2-6}
		& TV-VAR & Stat. VAR & Lasso & Ridge & MLE \\\midrule
		$\ell_\infty$   & \textbf{0.830} & 1.207 & 1.002 & 3.060 & 8.452 \\
		& (0.109) & (0.234) & (0.120) & (0.108) & (0.553) \\
		$\ell_1$    & \textbf{1.023} & 3.529 & 1.129 & 3.061 & 8.457 \\
		& (0.187) & (1.214) & (0.151) & (0.092) & (0.551) \\
		$\ell_\rho$  & \textbf{0.504} & 1.034 & 0.543 & 0.865 & 2.861 \\
		& (0.039) & (0.180) & (0.033) & (0.022) & (0.198) \\
		$\ell_F$     & \textbf{0.267} & 0.298 & 0.282 & 0.455 & 1.107 \\
		& (0.013) & (0.021) & (0.013) & (0.005) & (0.027) \\
		\bottomrule
	\end{tabular}%
	\label{tab:err_random}%
\end{table}%

\begin{figure}[htp]
  \centering
  \label{figur}\caption{ROC curves under different settings}
  \subfloat[Pattern is hub]{\label{fig:roc_hub_fix_tau}\includegraphics[width=60mm]{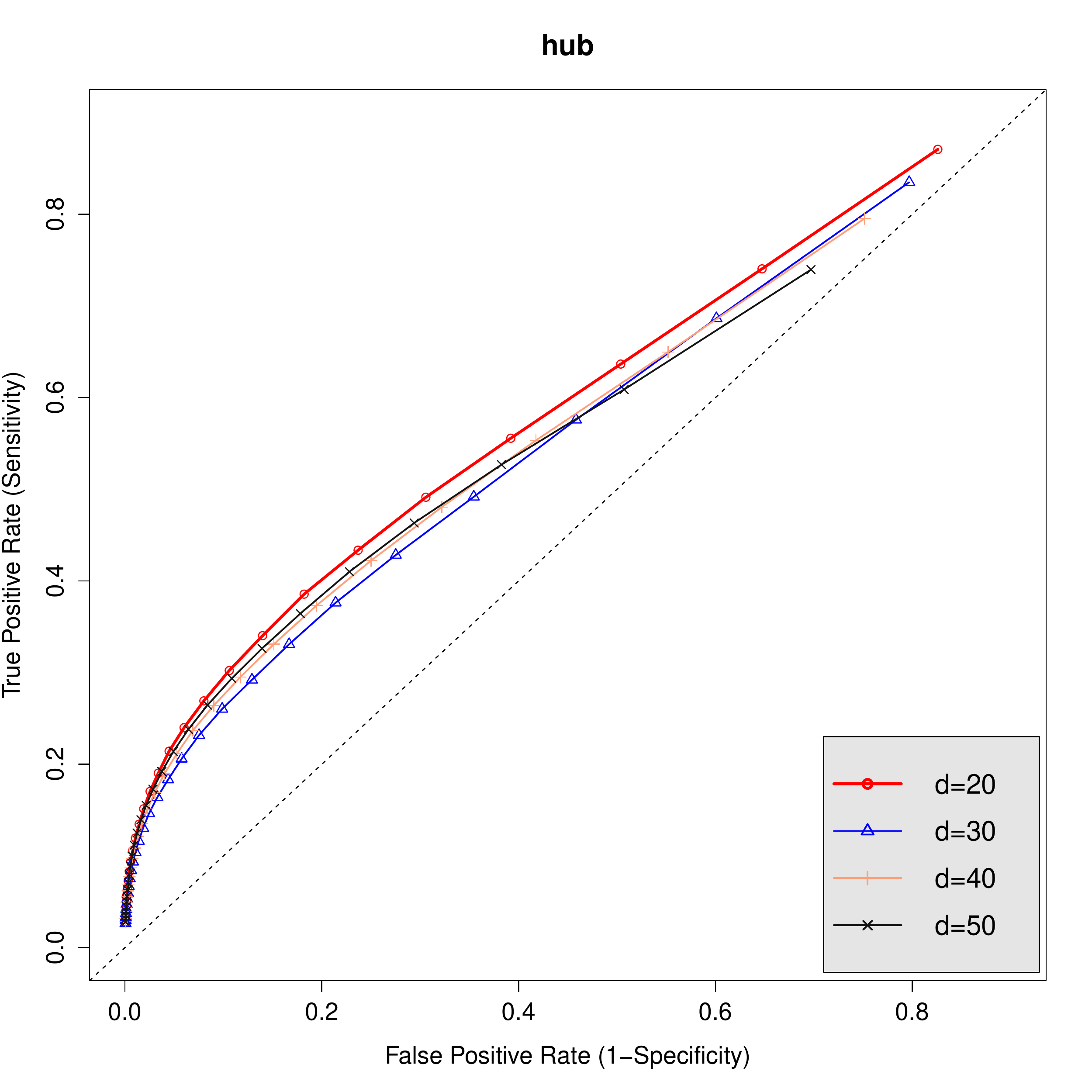}}
  \subfloat[Pattern is cluster]{\label{fig:roc_cluster_fix_tau}\includegraphics[width=60mm]{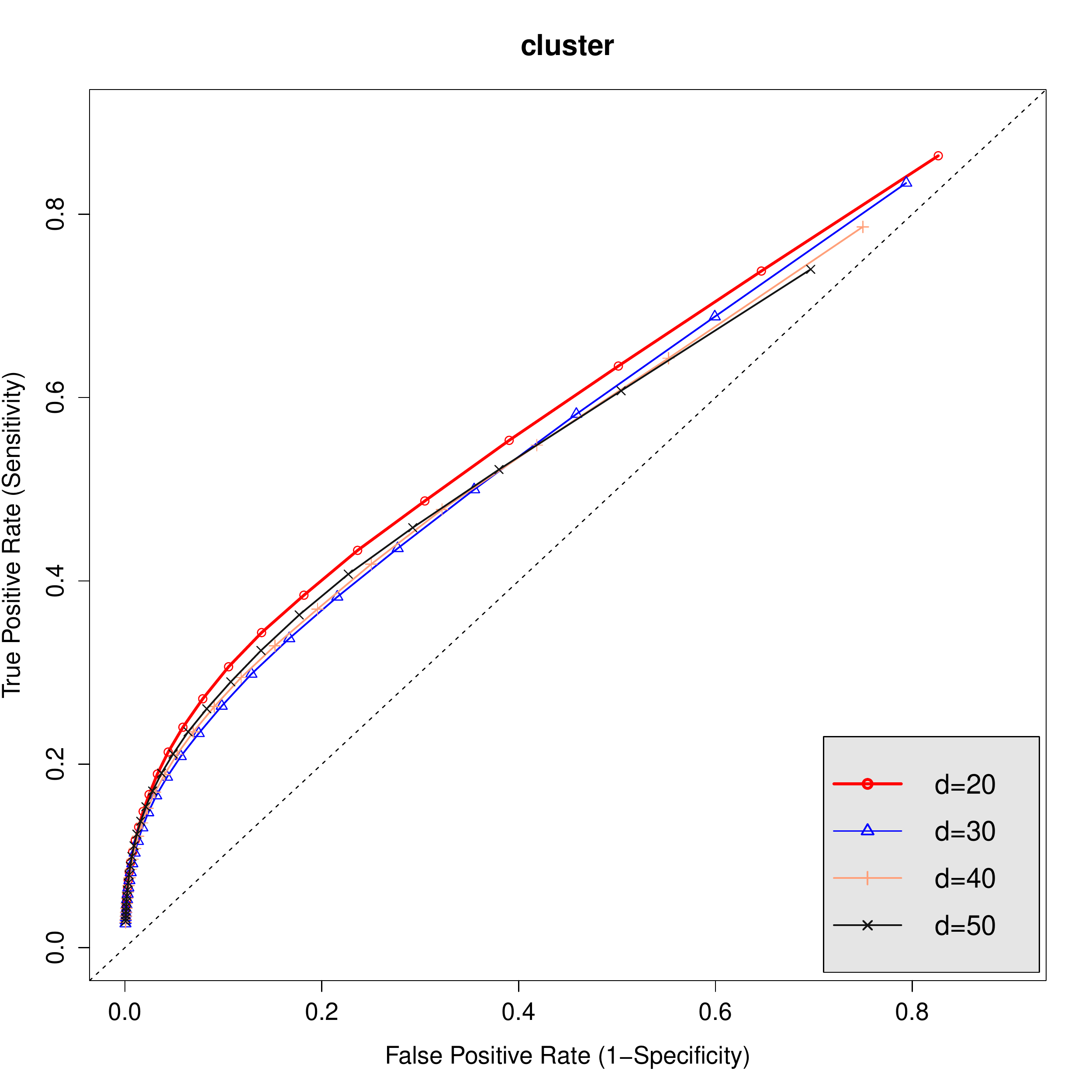}}
  \\
  \subfloat[Pattern is band]{\label{fig:roc_band_fix_tau}\includegraphics[width=60mm]{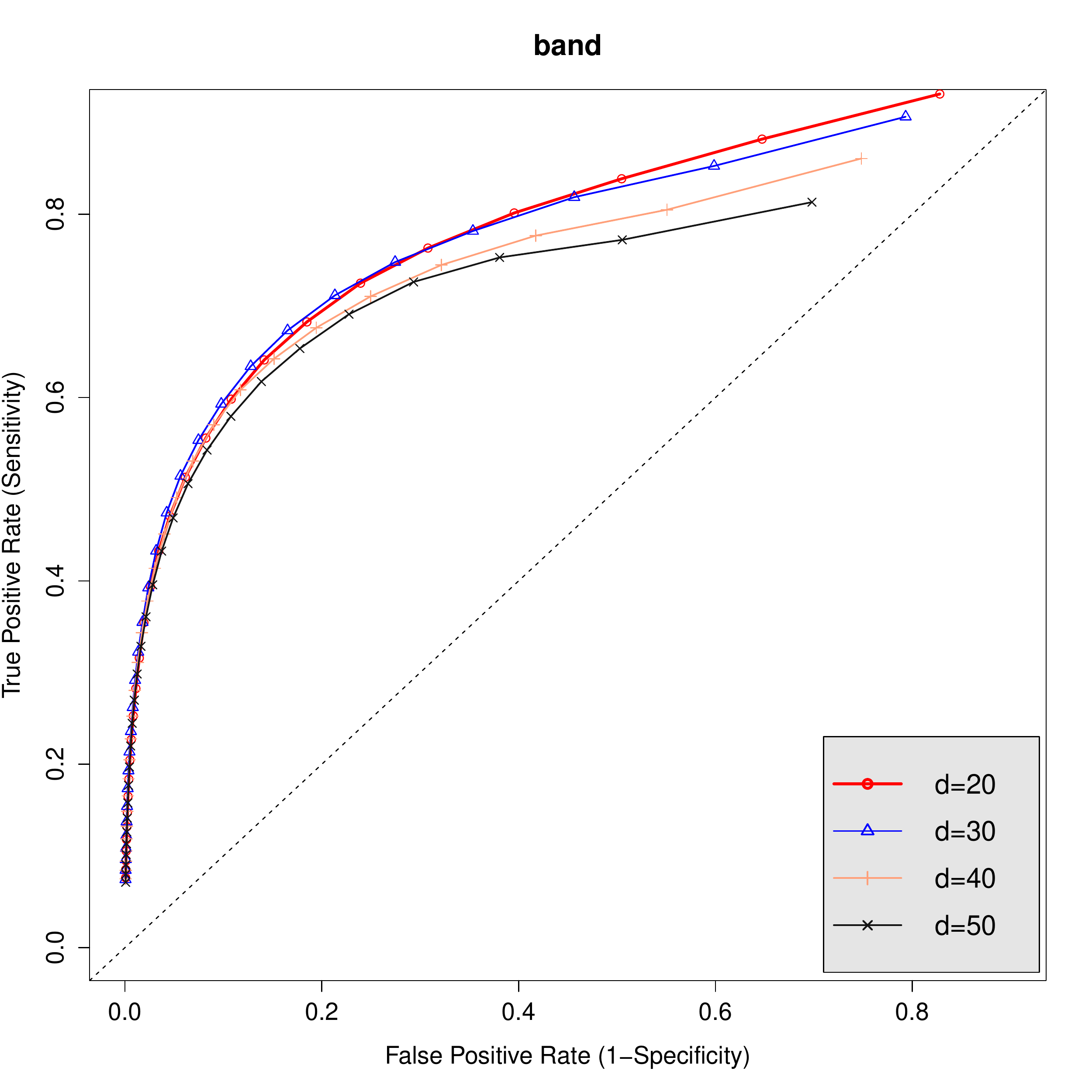}}
  \subfloat[Pattern is random]{\label{fig:roc_random_fix_tau}\includegraphics[width=60mm]{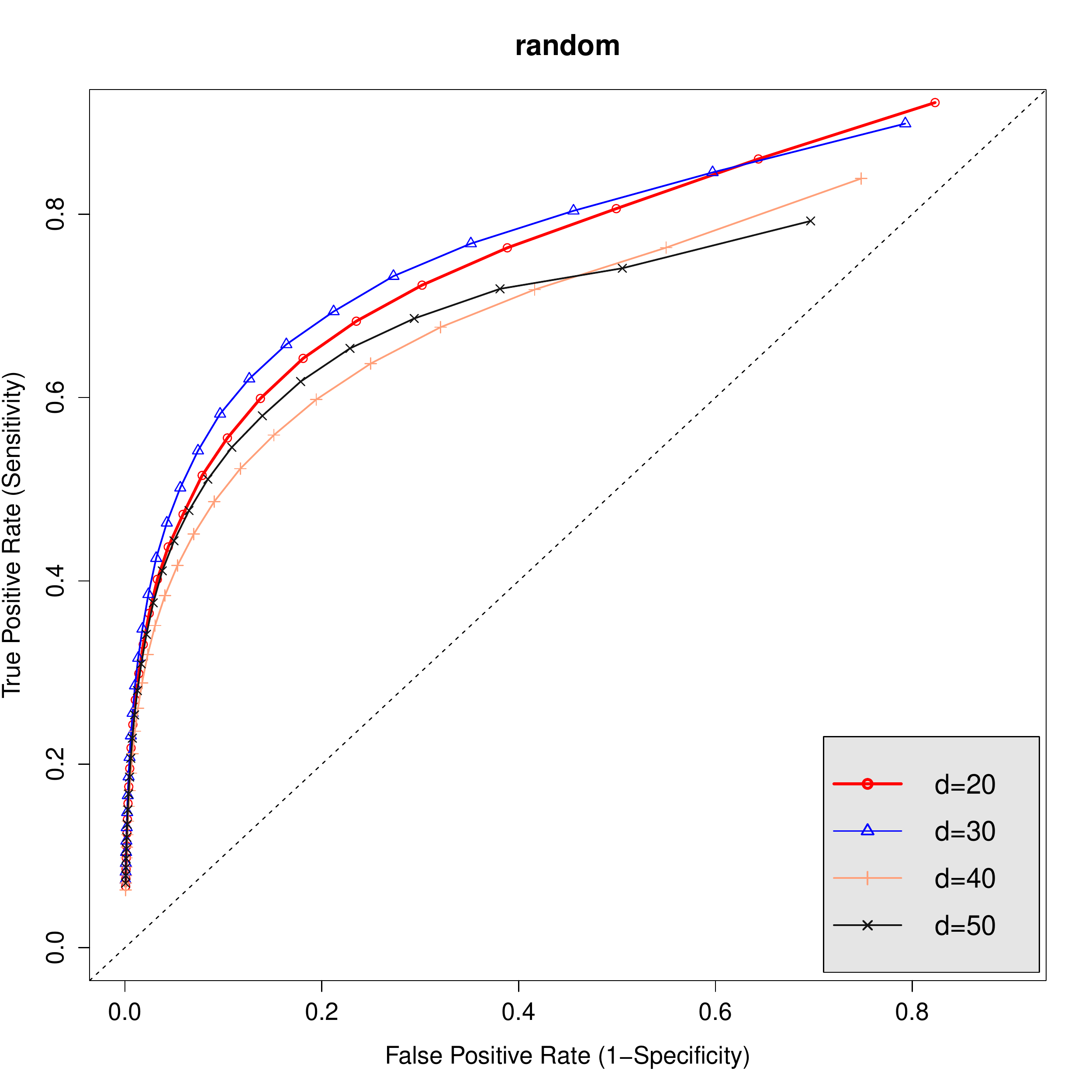}}
\end{figure}

\section{Real data analysis}
\label{sec:real-data-analysis}

\subsection{Finance data: stock prices}
\label{subsec:finance-data}
In this section, we compare the aforementioned estimators on a real financial dataset. The dataset is from Yahoo! Finance (\texttt{finance.yahoo.com}). The data matrix contains daily closing prices of 452 stocks that are consistently in the S\&P 500 index between January 1st, 2003 and January 1st, 2008. We choose such time range to avoid the effects of the two financial crises in the year of 2001 and 2008, which could make the stock prices non-smooth with sharp drops and pick-ups. In total, there are 1,258 time points. We first standardize the data to zero mean and unit variance and detrend the data. We then fit the detrended data for those stocks which are most smoothed (without obvious change points) using AR(1) model and perform the Ljung-Box Tests by using \texttt{Box.test()} in R. We keep those stocks with nonzero coefficients at significant level 0.05. 30 stocks are finally selected and 10 of them are shown in Table \ref{tab:finance_selsto}. The ten selected stocks in Table \ref{tab:finance_selsto} are from six sectors including: Consumer Staples, Consumer Discretionary, Industrials, Financials, Utilities and Energy. The resulting data matrix is denoted as $\bar{X}\in \mathbb{R}^{1258\times 30}$. We use the Priestley-Subba Rao (PSR) stationarity test (such as \texttt{stationarity()} in R package \texttt{fractal}) to some of the stocks such as Kellogg Co. and Target Corp. to find that these time series are not stationary at significant level $0.05$. Thus, it is inappropriate to model the data with the stationary VAR model  in \cite{hanluliu2015}. Finally we fit the data $\bar{X}$ into the sparse TV-VAR model with order $k=1$, stationary VAR \cite{hanluliu2015} with order $k=1$, time-varying lasso method \cite{hsuhungchang2008a},  time-varying ridge method \cite{hamilton1994} and time-varying MLE. % The performance of those methods are compared on the averaged prediction errors.

In order to compare the performance of the five methods, we consider the one-step-ahead prediction for $\bar{X}_t$, $t=1159,...,1258$, by using $\bar{X}_{J_t,*}$, where $J_t=\{j:t-1158\leq j\leq t-1 \}$ as the training set. We estimate the transition matrices $\hat{A}_t(\lambda)$ where $\lambda$ is the regularization parameter such as the $\tau$ in the TV-VAR or the shrinkage parameter in the ridge method. We use the Epanechnikov kernel $K(v) = 0.75(1-v^2)\I(|v| \le 1)$ with bandwidth $b_n=0.3$, which means that only the last 30\% data in the training set are used for prediction for the TV-VAR, time-varying lasso method, time-varying ridge method and time-varying MLE. Since stationary VAR \cite{hanluliu2015} model uses all training data, in order to make fair comparisons, we only treat the last 30\% time points (347 observations) in $\bar{X}_{J_t,*}$ as the training set for the stationary VAR. The averaged prediction error for a specific $\lambda$ is measured by:
$$\overline{Err}(\lambda)=\frac{1}{100}\sum_{t=1159}^{1258}\|\bar{X}_{t,*}-\hat{A}_t(\lambda)\bar{X}_{t-1,*}\|_2.$$

%Table \ref{tab:finance_1stb} shows the smallest averaged one-step-ahead prediction errors $min_{\lambda*}\overline{Err}(\lambda)$ for each of the five methods described above. 

The smallest averaged one-step-ahead prediction errors $min_{\lambda*}\overline{Err}(\lambda)$ for five methods were shown in Table \ref{tab:finance_1stb}. In Figure \ref{fig:fin_b1}, we show an example for the predicted closing prices from the five methods and the detrended true closing prices of Target Corp. Clearly, the methods with time-varying structures perform similarly but outperform stationary VAR.

One advantage of the TV-VAR compared with the stationary VAR is that it can capture the time-varying data structures. If we treat a transition matrix as an adjacency matrix for an undirected weighted graph, we can visualize the graph structures at different time points. For instance, Fig \ref{fig:fin_adja_1}, \ref{fig:fin_adja_2}, \ref{fig:fin_adja_3} and \ref{fig:fin_adja_4} are four graphical representations of the TV-VAR, which clearly show some time-evolving patterns. In the graphs, different vertices represent different stocks and the edges represent the cross-sectional dependence between stocks. Bolder lines indicate stronger dependence and the stocks in the same sector are in the same color. In order to illustrate the time-varying structure clearly, we only consider the 10 stocks in Table \ref{tab:finance_selsto}. From these four figures, we observe that stocks in the same and closely related sectors are often connected. For examples, CME Group Inc. and Hartford Financial Svc. GP from Financials sector show a dynamic dependence structure, and Boeing Company in Industrials shows the consistent correlation with Exxon Mobil Corp. in Energy sector.

\begin{table}[htbp]
  \centering
  \caption{The prediction errors for the five methods on the stock price data. The standard deviations are shown in parentheses.}
    \begin{tabular}{rccccc}
    \toprule
          & TV-VAR & Stat. VAR & Lasso & Ridge & MLE \\
    \midrule
   $\overline{Err}(\lambda*)$  & 0.4822 & 2.2824 & 0.4580 & 0.4843 & 0.4902 \\
    Standard Deviation & (0.1751) & (0.5169) & (0.1810) & (0.1747) & (0.1799) \\
    \bottomrule
    \end{tabular}%
  \label{tab:finance_1stb}%
\end{table}%

\begin{figure}[htbp]
\centering
 \includegraphics[width=11cm,height=7.4cm]{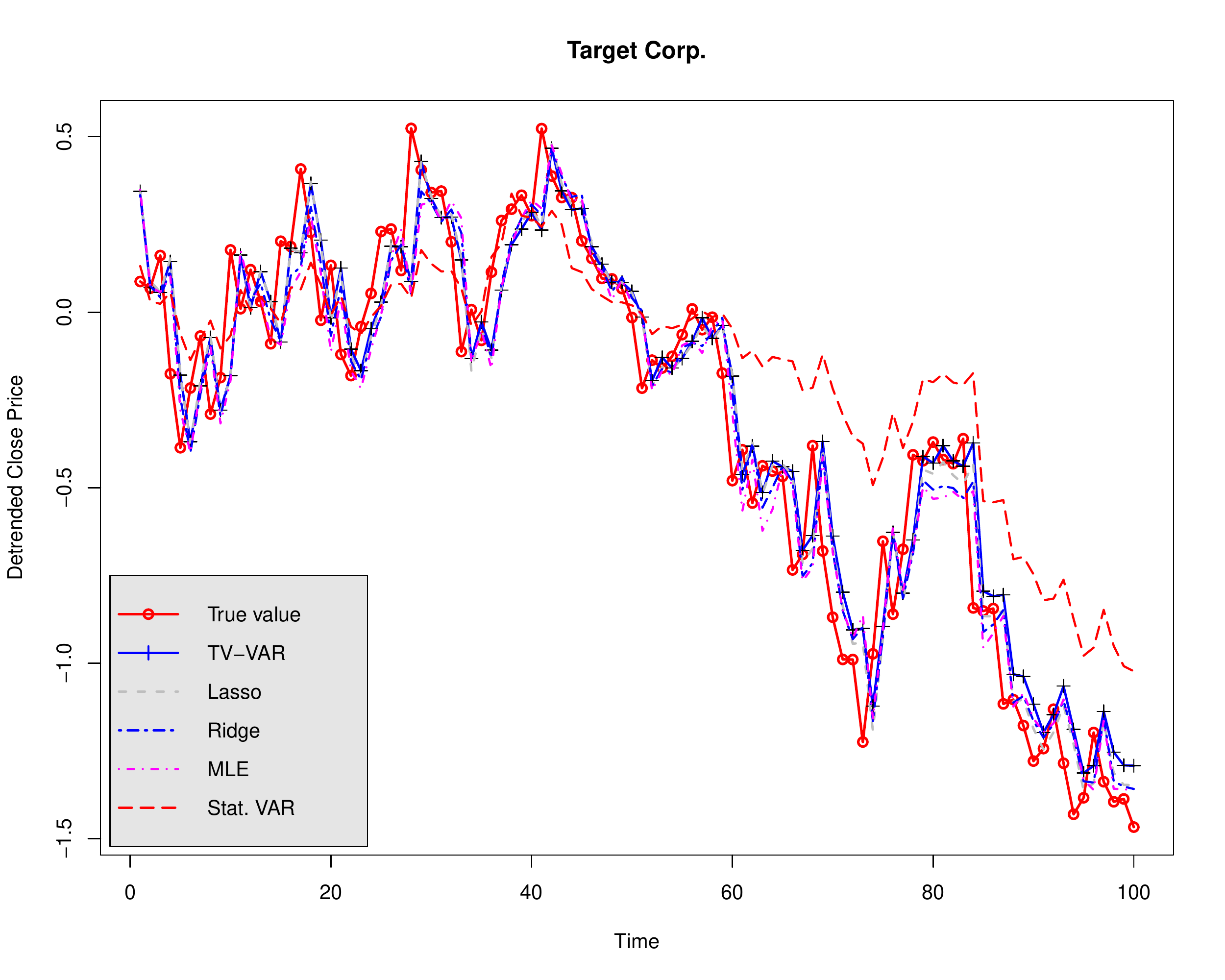}
 \caption{Comparison of the predicted closing prices and the true detrended closing prices of Target Corp.}
 \label{fig:fin_b1}
\end{figure}

\begin{figure}[htbp]
  \centering
  \label{figur}\caption{The graphs based on adjacency matrices estimated by TV-VAR}
  \subfloat[Adjacency matrix $\hat{A}_{1183}$]{\label{fig:fin_adja_1}\includegraphics[width=60mm]{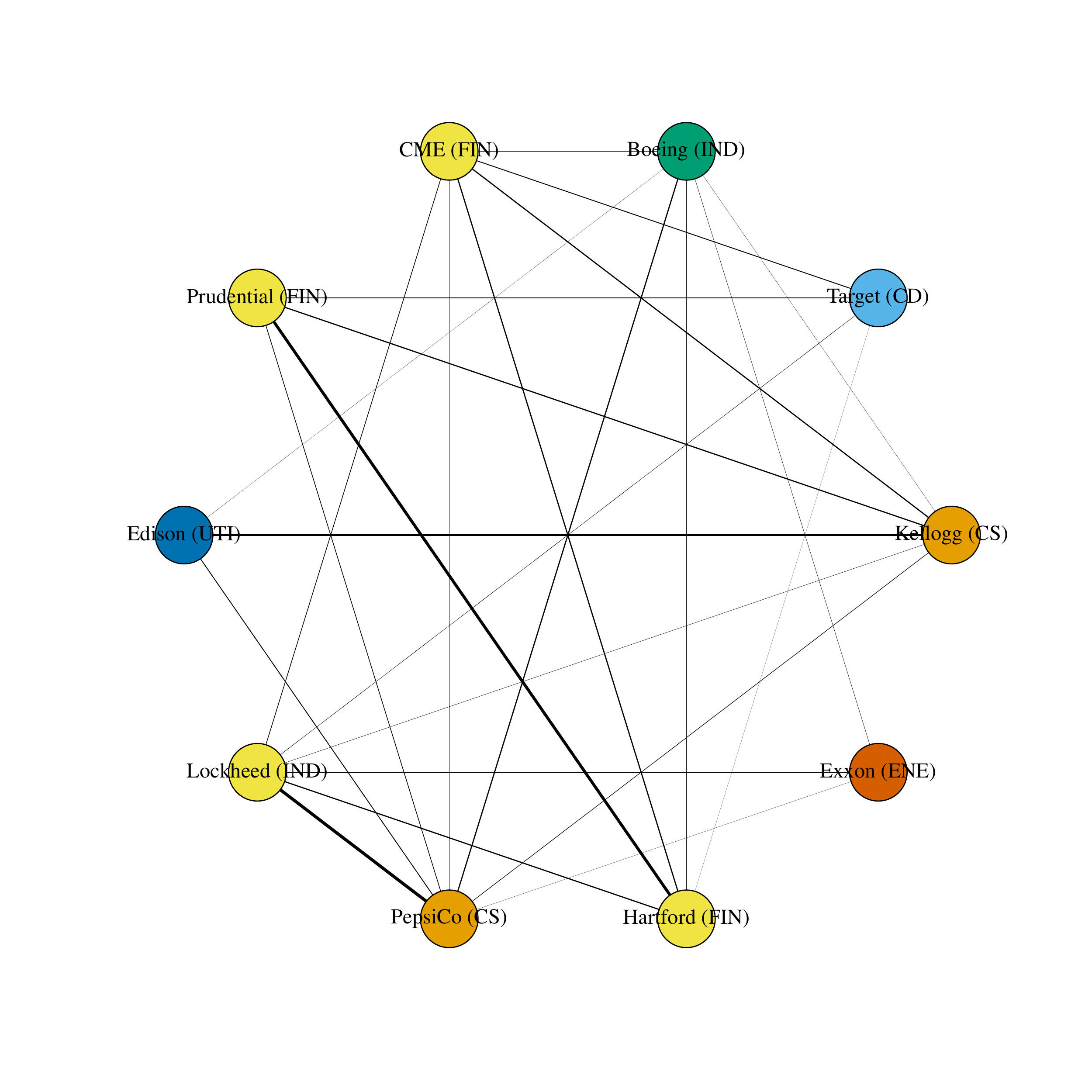}}
  \subfloat[Adjacency matrix $\hat{A}_{1208}$]{\label{fig:fin_adja_2}\includegraphics[width=60mm]{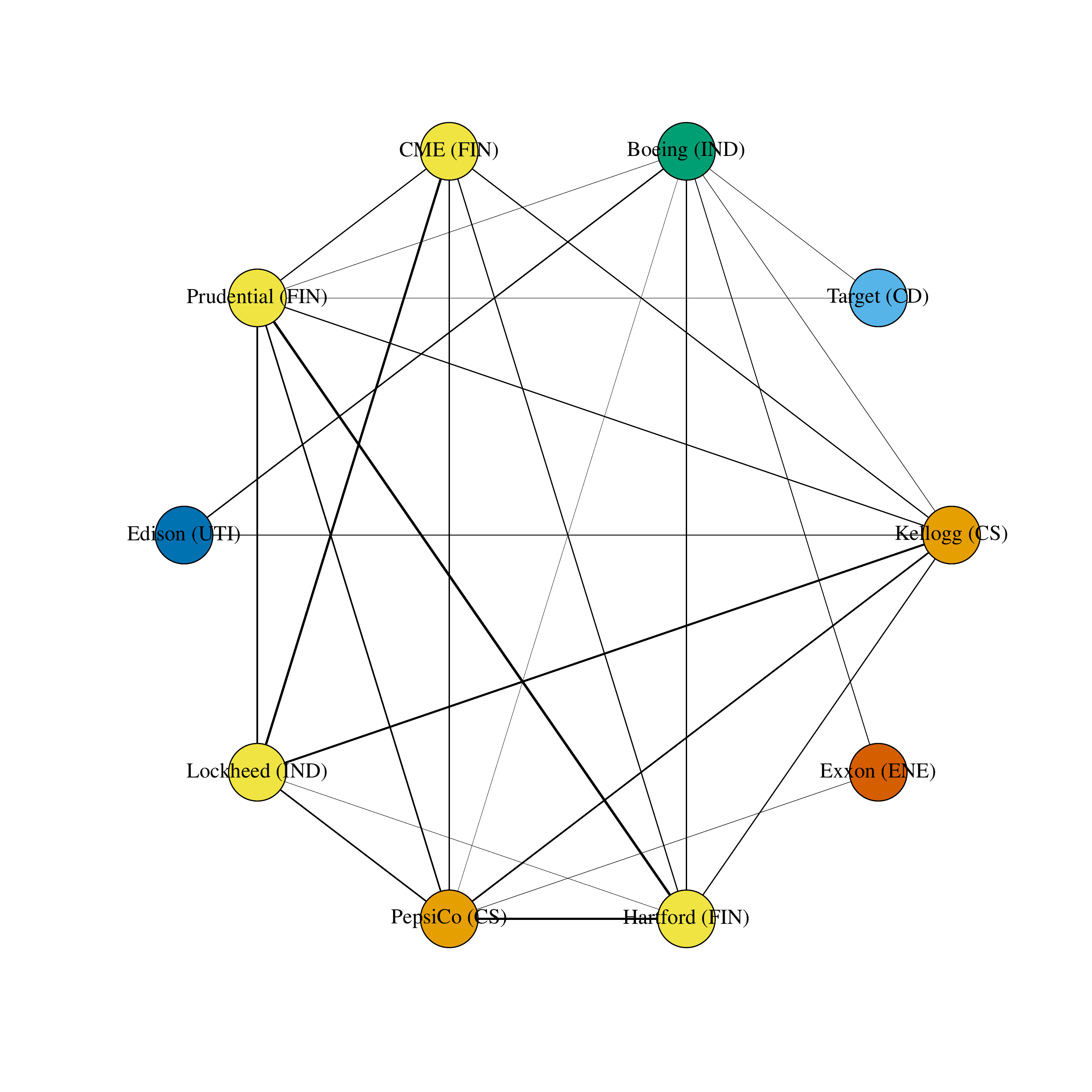}}
  \\
  \subfloat[Adjacency matrix $\hat{A}_{1233}$]{\label{fig:fin_adja_3}\includegraphics[width=60mm]{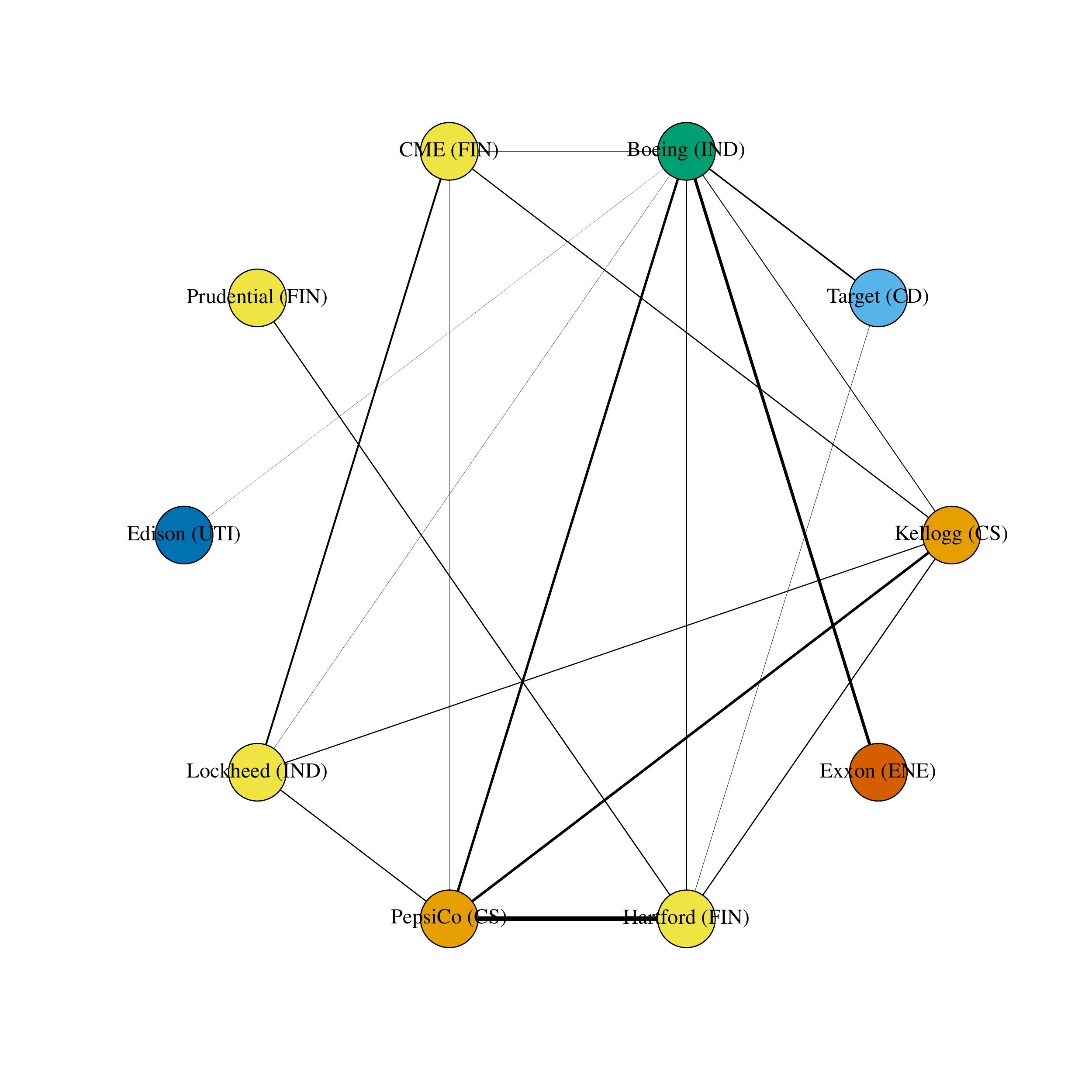}}
  \subfloat[Adjacency matrix $\hat{A}_{1258}$]{\label{fig:fin_adja_4}\includegraphics[width=60mm]{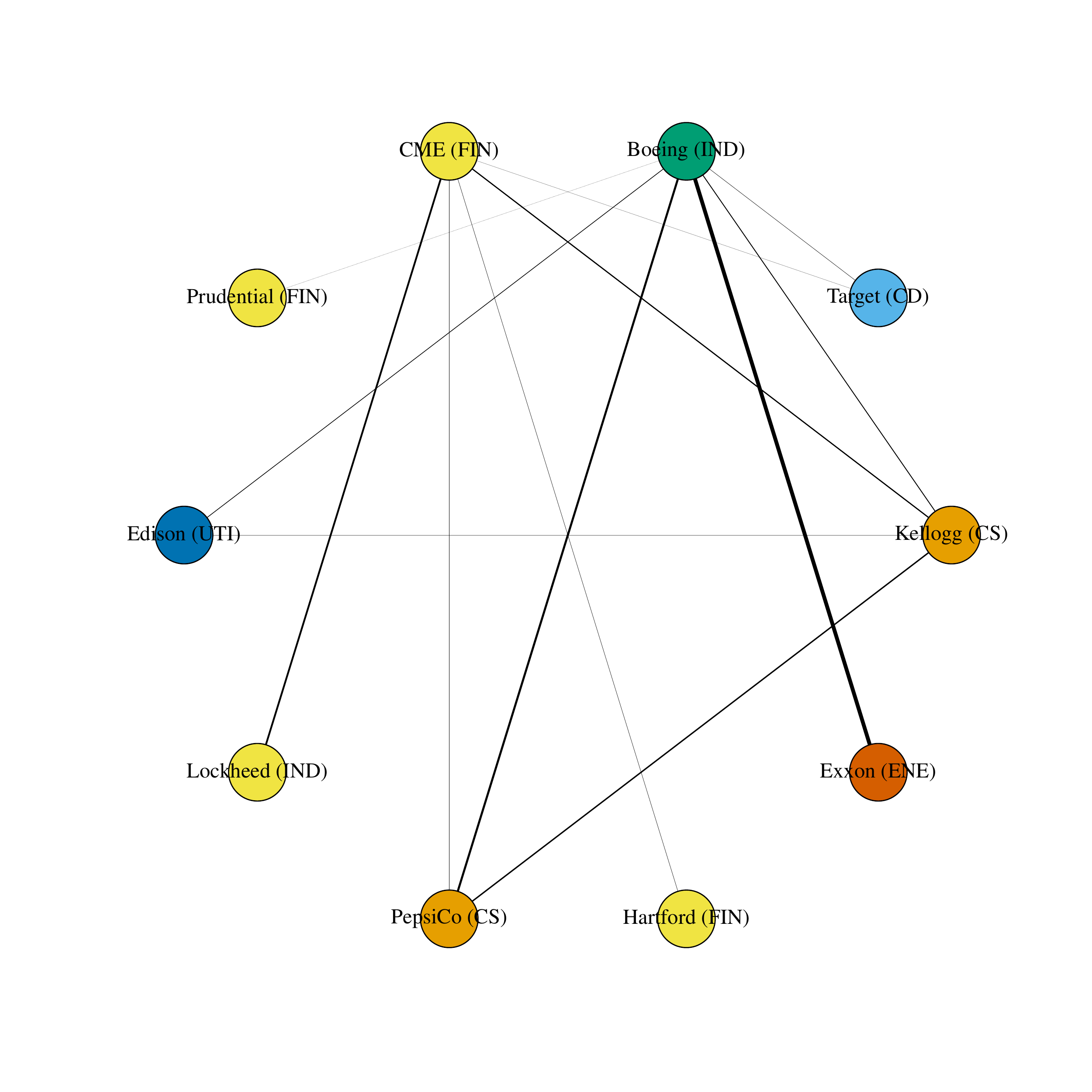}}
\end{figure}

\subsection{Economic data: exchange rates}
\label{subsec:economic-data}

We also apply the exchange rates data on international currencies and study how the exchange rates evolve over time. We use the exchange rates data from the Federal Reserve Bank at St. Louis and choose the date range from January 1st, 2003 to January 1st, 2008 to avoid the financial crises' effects on exchange rates. In total, there are 1,303 time points. We choose 15 major currencies from the continents including Europe, Australia, North America, Asian, South America and we normalize each currency by the exchange rate of the U.S. dollar shown in Table \ref{tab:exchange_selcur}. As in Section \ref{subsec:finance-data}, we standardize and detrend the data. The exchange rates for all currencies to the U.S. dollars are smoothed and in addition the PSR test rejects the stationarity hypothesis of the exchange rate Euro/U.S. at the significant level $0.05$. Thus, stationary VAR is also not appropriate in this application, which is also supported by our empirical findings in Table \ref{tab:exchange_1stb} and Figure \ref{fig:exch_b1}. We compare the performance of the five methods by the one-step-ahead prediction errors for the last 50 data and use the same prediction metric in Section \ref{subsec:finance-data}. The bandwidth here is $b_n=0.3$. We do not include the prediction curve of the stationary VAR in Figure \ref{fig:exch_b1} since its errors are too large. Table \ref{tab:exchange_1stb} and Figure \ref{fig:exch_b1} show that  methods with time-varying structure performs similarly but much better than the stationary VAR.

\begin{table}[htbp]
  \centering
  \caption{The prediction errors for the five methods on the exchange rates data. The standard deviations are shown in parentheses.}
    \begin{tabular}{rccccc}
    \toprule
          & TV-VAR & Stat. Var & Lasso & Ridge & MLE \\
    \midrule
     $\overline{Err}(\lambda*)$    & 0.2990 & 8.7637 & 0.2554 & 0.2634 & 0.2638 \\
    Standard Deviation & (0.0924) & (5.2835) & (0.1037)  & (0.1032)& (0.1032) \\
    \bottomrule
    \end{tabular}%
  \label{tab:exchange_1stb}%
\end{table}%

\begin{figure}[htbp]
\centering
 \includegraphics[width=12cm,height=8cm]{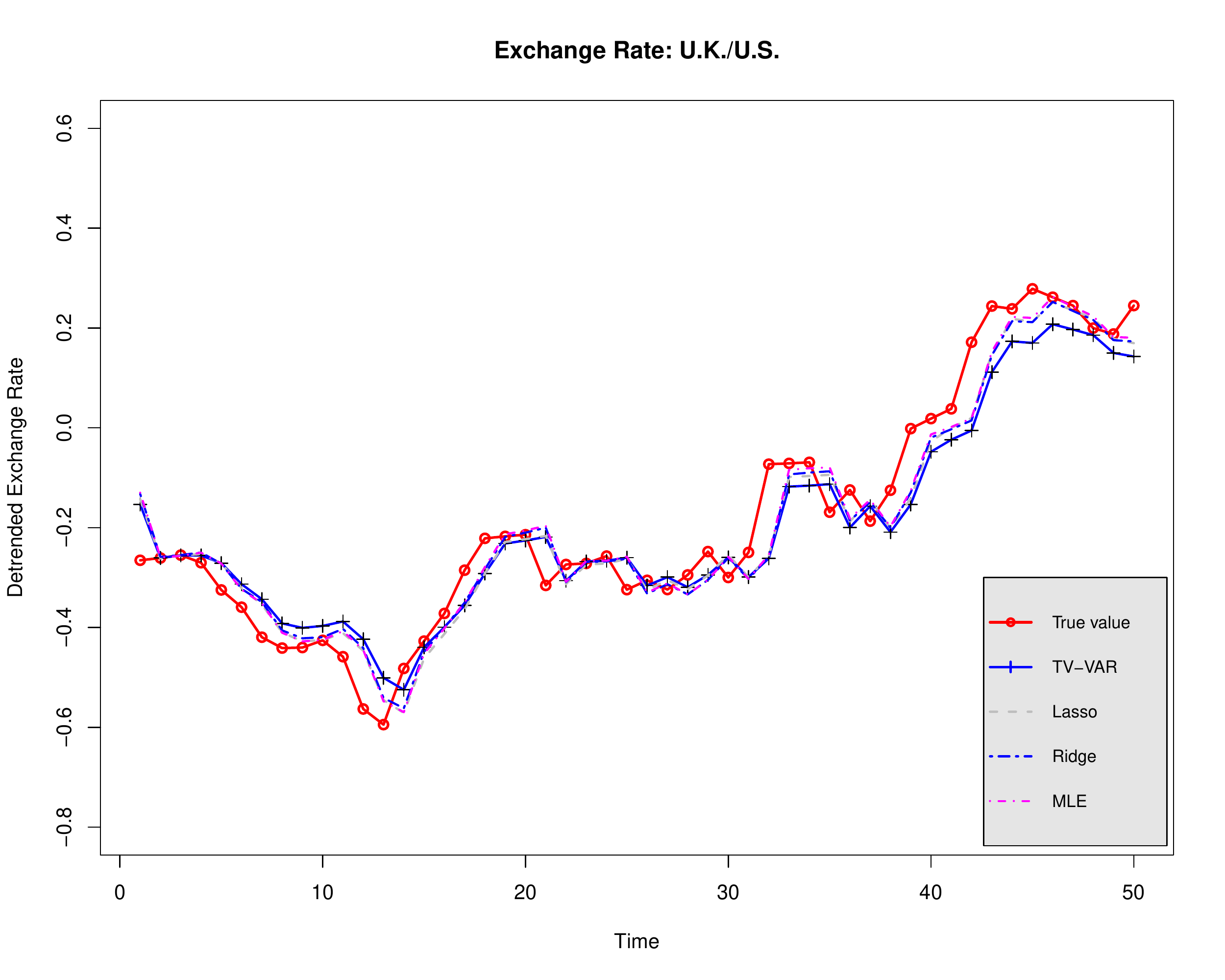}
 \caption{Compare the predicted and true detrended exchange rate of U.K./U.S.}
 \label{fig:exch_b1}
\end{figure}

\section{Proofs}
\label{sec:proofs}

\begin{proof}[\textbf{Proof of Lemma \ref{lem:smoothness-relation}}]
Since $\tilde\vx_m(t)$ is a stationary VAR, we have $\tilde\Sigma(t) =I_{d \times d}+ A(t) \tilde\Sigma(t)A(t)^\top $. By the extension, $\Sigma(t)=\tilde\Sigma(t)$ for all $t \in (0,1)$. Applying the chain rule of differentiation w.r.t. $t$, we get
$$
\dot\Sigma(t) = \dot{A}(t) \Sigma(t) A(t)^\top + A(t) \dot\Sigma(t) A(t)^\top + A(t) \Sigma(t) \dot{A}(t)^\top.
$$
By Lemma \ref{lem:norm-bounds-three-matrices} and the triangle inequality,
%$$
%|\dot\Sigma(t)|_\infty \le |\dot{A}(t)|_{\infty} |\Sigma(t)|_{\ell^\infty} |A(t)|_{\ell^1} + |A(t)|_{\ell^1} |\dot\Sigma(t)|_\infty |A(t)|_{\ell^1} + |A(t)|_{\ell^1} |\Sigma(t)|_{\ell^1} |\dot{A}(t)|_{\infty}.
%$$
\begin{align*}
&|\dot\Sigma(t)|_\infty \\
\le &|\dot{A}(t)|_{\infty} |\Sigma(t)|_{\ell^\infty} |A(t)|_{\ell^1} + |A(t)|_{\ell^1} |\dot\Sigma(t)|_\infty |A(t)|_{\ell^1} + |A(t)|_{\ell^1} |\Sigma(t)|_{\ell^1} |\dot{A}(t)|_{\infty}.
\end{align*}

Then, (\ref{eqn:smoothness-relation-1dev}) follows from the assumption that $\sup_{t \in [0,1]} |A(t)|_{\ell^1} < 1$ and that $\Sigma(t)$ is symmetric. (\ref{eqn:smoothness-relation-2dev}) follows from the differentiation of $\dot\Sigma(t)$ w.r.t. $t$.
\end{proof}

\begin{lem}
\label{lem:norm-bounds-three-matrices}
Let $A,B,C$ be matrices of compatible dimension for the product $ABC$. Then
\begin{eqnarray*}
|ABC|_\infty &\le& |A|_{\ell^1} |B|_\infty |C|_{\ell^\infty}, \\
|ABC|_\infty &\le& |A|_{\ell^1} |B|_{\ell^1} |C|_{\infty}, \\
|ABC|_\infty &\le& |A|_{\infty} |B|_{\ell^\infty} |C|_{\ell^\infty}.
\end{eqnarray*}
\end{lem}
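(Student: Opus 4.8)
The plan is to prove each of the three inequalities by expanding the $(i,j)$-entry of the triple product $ABC$ and bounding it using a carefully chosen pairing of H\"older-type estimates, then taking the maximum over $(i,j)$. Write $(ABC)_{ij} = \sum_{k,\ell} A_{ik} B_{k\ell} C_{\ell j}$. The key observation is that the three bounds correspond to three different ways of distributing the two summation indices $k$ and $\ell$ among the three factors: in each case one factor is controlled in the entrywise $\ell^\infty$ norm (it contributes the ``size'' of a single entry), while the other two absorb the summations via a row-sum norm ($|\cdot|_{\ell^1}$, i.e. max absolute row sum) or a column-sum norm ($|\cdot|_{\ell^\infty}$, i.e. max absolute column sum).

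First I would prove the middle inequality $|ABC|_\infty \le |A|_{\ell^1} |B|_{\ell^1} |C|_\infty$, which is the cleanest. Fix $(i,j)$ and estimate
\[
|(ABC)_{ij}| \le \sum_{k,\ell} |A_{ik}|\,|B_{k\ell}|\,|C_{\ell j}| \le |C|_\infty \sum_{k} |A_{ik}| \sum_{\ell} |B_{k\ell}| \le |C|_\infty \sum_k |A_{ik}| \cdot |B|_{\ell^1},
\]
and then $\sum_k |A_{ik}| \le |A|_{\ell^1}$; taking the max over $i,j$ gives the claim. For the first inequality $|ABC|_\infty \le |A|_{\ell^1} |B|_\infty |C|_{\ell^\infty}$, I would instead pull $|B|_\infty$ out first, leaving $|(ABC)_{ij}| \le |B|_\infty \sum_k |A_{ik}| \sum_\ell |C_{\ell j}|$, where $\sum_\ell |C_{\ell j}| \le |C|_{\ell^\infty}$ is a column sum of $C$ and $\sum_k |A_{ik}| \le |A|_{\ell^1}$ is a row sum of $A$. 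For the third inequality $|ABC|_\infty \le |A|_\infty |B|_{\ell^\infty} |C|_{\ell^\infty}$, pull out $|A|_\infty$ and reorganize the double sum as $\sum_\ell |C_{\ell j}| \sum_k |B_{k\ell}| \le |C|_{\ell^\infty} \cdot |B|_{\ell^\infty}$, noting $\sum_k |B_{k\ell}|$ is a column sum of $B$.

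There is no real obstacle here; the only thing to be careful about is matching the definitions used in the paper — $|M|_{\ell^1} = \max_j \sum_k |M_{jk}|$ is the max \emph{row} sum and $|M|_{\ell^\infty} = \max_k \sum_j |M_{jk}|$ is the max \emph{column} sum — so that each summation over an index is bounded by the norm of the factor in which that index labels the correct (row vs.\ column) direction. The three cases are exhaustive in the sense that the single ``small'' factor measured in $|\cdot|_\infty$ can be $A$, $B$, or $C$, and once that choice is made the placement of the two row/column-sum norms is forced.
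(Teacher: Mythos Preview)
Your proof is correct and essentially the same as the paper's: both expand the $(i,j)$-entry of $ABC$ and bound the double sum by pulling out the appropriate entrywise maximum and row/column sums. The only cosmetic difference is that the paper handles the first two inequalities together and then obtains the third from the second by transposition (using $(ABC)^\top = C^\top B^\top A^\top$ and $|M^\top|_{\ell^1}=|M|_{\ell^\infty}$), whereas you prove all three directly.
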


\begin{proof}[\textbf{Proof of Lemma \ref{lem:norm-bounds-three-matrices}}]
The first and second inequalities follow from
\begin{eqnarray*}
|ABC|_\infty &=& \max_{i,j} \left| \sum_{k,m} A_{ik}B_{km}C_{mj} \right| \le \max_{i,j} \sum_k |A_{ik}| \sum_{m} |B_{km}C_{mj} | \\
&\le& \left(\max_i \sum_k |A_{ik}| \right) \max_{j,k} \sum_m |B_{km}C_{mj}| \\
&\le& |A|_{\ell^1} \min\left\{ |B|_\infty |C|_{\ell^\infty}, \; |B|_{\ell^1} |C|_\infty \right\}.
\end{eqnarray*}
The third inequality follows from the second one by considering $(ABC)^\top = C^\top B^\top A^\top$ and $|A^\top|_{\ell^1} = |A|_{\ell^\infty}$.
\end{proof}

Recall the TV-VAR(1) model $\vx_i = A_i \vx_{i-1} + \ve_i$. The following key lemma presents a large deviation bound for the marginal and lag-one autocovariance matrices with sub-Gaussian innovations.
\vspace{0.1in}
\begin{lem}
\label{lem:large-dev-Sigma0}
Suppose $\bar{\rho} = \sup_{i \ge 0} \rho(A_i) < 1$ and $$\sup_{t \in [0,1]}  |\dot\Sigma_{m,jk}(t)| \vee |\ddot\Sigma_{m,jk}(t)| \le C_0, m = 0,1,$$ for some absolute constant $C_0 < \infty$. If $\ve_i = (e_{i1}, \cdots, e_{id})$ has iid sub-Gaussian components, i.e. $\|e_{ij}\|_q \le C_1 q^{1/2}$ for all $q \ge 1$, then, for any fixed $t \in [b_n, 1-b_n]$, we have with probability at least $1 - 2 d^{-1}$ that
\begin{equation}
\label{eqn:large-dev-Sigma0}
|\hat\Sigma_{t,0} - \Sigma_{t,0}|_\infty \vee |\hat\Sigma_{t,1} - \Sigma_{t,1}|_\infty \le C \left( b_n^2 +\sqrt{\log{d} \over n b_n} \right),
\end{equation}
where the constant $C = C(\bar{\rho}, C_0, C_1)$ does not depend on $n, b_n$ and $d$.
\end{lem}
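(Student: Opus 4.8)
The plan is to decompose the estimation error of the kernel-smoothed autocovariance into a deterministic bias term and a stochastic fluctuation term, and bound each at the entrywise $\ell^\infty$ level. Writing $\hat\Sigma_{t,\ell} = \sum_{m} w(t,m)\, \vx_m\vx_{m+\ell}^\top$, I would first replace $\vx_m$ by the stationary approximation $\tilde\vx_m(t)$ on the kernel window $|m/n-t|\le b_n$, using the locally-stationary bound (\ref{eqn:locally-stat-condition}) together with the Cauchy--Schwarz computation already carried out in Section \ref{sec:asymptotics} to see that $\E[\vx_m\vx_{m+\ell}^\top] = \Sigma_\ell(m/n) + O(n^{-1})$ uniformly in entries. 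The deterministic part is then $\sum_m w(t,m)\Sigma_\ell(m/n) - \Sigma_\ell(t)$; since $\Sigma_{\ell,jk}\in\mathcal C^2([0,1])$ with uniformly bounded second derivatives (the hypothesis $|\ddot\Sigma_{m,jk}(t)|\le C_0$, which via Lemma \ref{lem:smoothness-relation} is implied by smoothness of $A(\cdot)$), a second-order Taylor expansion of $\Sigma_{\ell,jk}$ around $t$, combined with $\int K = 1$ and the support of $K$ in $[-1,1]$, yields a bias of order $b_n^2$ entrywise (the linear term need not vanish for Nadaraya--Watson weights, but it contributes only $O(b_n^2 + 1/(nb_n))\cdot$ derivative bound after accounting for the discretization of the denominator, which is absorbed).

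For the stochastic part I would control $|\sum_m w(t,m)(\vx_m\vx_{m+\ell}^\top - \E\vx_m\vx_{m+\ell}^\top)|_\infty$ for $\ell=0,1$. Fix a pair $(j,k)$ and consider the scalar $\sum_m w(t,m)(X_{mj}X_{(m+\ell)k} - \E[\cdot])$. Under $\bar\rho<1$ the VAR process admits a convergent MA($\infty$) representation $\vx_m = \sum_{r\ge 0}(\prod A)\ve_{m-r}$ with geometrically decaying coefficient norms, so $X_{mj}X_{(m+\ell)k}$ is a quadratic form in the iid sub-Gaussian innovations with exponentially decaying dependence; each such product is sub-exponential with $\|X_{mj}X_{(m+\ell)k}\|_q \lesssim q$. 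I would then invoke a Bernstein-type (or Hanson--Wright-type) deviation inequality for weighted sums of such weakly-dependent sub-exponential variables — the relevant variance proxy is $\sum_m w(t,m)^2 \asymp 1/(nb_n)$ since $w(t,m)\asymp 1/(nb_n)$ on $O(nb_n)$ indices — giving a tail $\exp(-c\, nb_n\, x^2)$ for $x\lesssim 1$. Setting $x \asymp \sqrt{(\log d)/(nb_n)}$ makes this at most $d^{-3}$, and a union bound over the $\le 2d^2$ entries of $\hat\Sigma_{t,0}$ and $\hat\Sigma_{t,1}$ (recall $d\ge 2$) gives probability at least $1 - 2d^{-1}$ that all entrywise fluctuations are $\lesssim \sqrt{(\log d)/(nb_n)}$.

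Combining the two parts and using $b_n = o(1)$, $n^{-1} = o(b_n)$ to absorb the $O(n^{-1})$ and $O(1/(nb_n))$ remainders into the stated rate $b_n^2 + \sqrt{(\log d)/(nb_n)}$ completes the proof, with the constant depending only on $\bar\rho$, $C_0$, $C_1$.

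The main obstacle is the concentration step: the summands $X_{mj}X_{(m+\ell)k}$ are neither independent nor bounded, only weakly dependent sub-exponential, so one needs a deviation inequality that simultaneously handles the serial dependence (via the geometric mixing/physical-dependence decay coming from $\bar\rho<1$) and the heavy-ish tails, and one must verify that the effective variance is genuinely $O(1/(nb_n))$ rather than inflated by the dependence — this is where the geometric decay of the MA coefficients is essential. I would most likely route this through the functional-dependence-measure machinery (e.g. a Nagaev- or Bernstein-type inequality for causal processes) or a blocking argument reducing to independent blocks of length $O(\log n)$, at the cost of a harmless logarithmic factor that can be reabsorbed by adjusting constants.
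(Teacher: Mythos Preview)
Your bias analysis matches the paper's exactly: Taylor expand $\Sigma_{\ell,jk}(\cdot)$ to second order and use the kernel moment identities to get $O(b_n^2 + (nb_n)^{-1})$. (The detour through the locally-stationary approximation $\tilde\vx_m(t)$ is unnecessary --- the paper works directly with $\E[\hat\Sigma_{0,jk}(t)] = \sum_i w(t,i)\Sigma_{0,jk}(t_i)$ --- but it does no harm.)

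The concentration step is where you diverge from the paper. You propose to treat $\sum_m w(t,m)\bigl(X_{mj}X_{(m+\ell)k}-\E[\cdot]\bigr)$ as a weighted sum of dependent sub-exponential variables and then appeal to a Bernstein/Nagaev inequality under physical dependence, or to blocking. That can be made to work, but it is exactly the ``main obstacle'' you flag, and the blocking route typically costs a logarithmic factor you would have to argue away.

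The paper sidesteps this entirely. Using the MA($\infty$) representation you already wrote down, it stacks all innovations into a single iid sub-Gaussian vector $\mbf\xi = (\ve_n^\top,\ve_{n-1}^\top,\dots)^\top$ and observes that the \emph{whole} centred sum is a single quadratic form
\[
\sum_m w(t,m)\bigl(X_{mj}X_{mk}-\E[\cdot]\bigr)
= \mbf\xi^\top (\tilde B^{(j)})^\top W_t\, \tilde B^{(k)}\, \mbf\xi - \tr\bigl((\tilde B^{(j)})^\top W_t\, \tilde B^{(k)}\bigr),
\]
where $W_t=\diag(w(t,\cdot))$ and $\tilde B^{(j)}$ is the block-triangular matrix of MA coefficients for coordinate $j$. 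One application of the Hanson--Wright inequality then gives the tail bound, and the serial dependence is handled not by a dependent-data inequality but by bounding the Frobenius and spectral norms of $(\tilde B^{(j)})^\top W_t \tilde B^{(k)}$; the geometric decay $\rho(B_{i,m})\le\bar\rho^{\,m}$ yields both norms $\lesssim (nb_n)^{-1}$, which is precisely the variance proxy you anticipated. This is cleaner and gives the exact rate without extraneous log factors. Your route would reach the same conclusion but with more machinery; the quadratic-form reformulation is the idea worth taking away.
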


\begin{proof}[\textbf{Proof of Lemma \ref{lem:large-dev-Sigma0}}]
First, consider the marginal covariance matrix $m = 0$. For each $j,k=1,\cdots,p$, we write
\begin{equation*}
\hat\Sigma_{0,jk}(t) - \Sigma_{0,jk}(t) = \{\hat\Sigma_{0,jk}(t) - \E[\hat\Sigma_{0,jk}(t)]\} + \{\E[\hat\Sigma_{0,jk}(t)] - \Sigma_{0,jk}(t) \} = I + II.
\end{equation*}
For $|t_i - t| \le b_n$, by the assumption $\sup_{t \in [0,1]} |\ddot\Sigma_{0,jk}(t)| \le C_0$, Taylor's expansion yields that $\Sigma_{0,jk}(t_i) = \Sigma_{0,jk}(t) + \dot\Sigma_{0,jk}(t) (t_i-t) + O(b_n^2)$ for all $t \in [b_n,1-b_n]$. Note that for $\ell = 0,1,2$, we have
\begin{equation*}
\sup_{t \in [b_n, 1-b_n]} \left| \sum_{i=1}^n K\left( {t_i-t \over b_n} \right) (t_i-t)^\ell - n b_n \int_{-1}^1 K(v) v^\ell dv \right| = O(1).
\end{equation*}
Then, the bias part is controlled by
\begin{eqnarray}
\nonumber
II &=& \sum_{i=1}^n w(t,i) \Sigma_{0,jk}(t_i) - \Sigma_{0,jk}(t) \\
\label{eqn:large-dev-Sigma0-bias}
&=& \dot\Sigma_{0,jk}(t) \sum_{i=1}^n w(t,i) (t_i-t) + O(b_n^2) = O((n b_n)^{-1} + b_n^2).
\end{eqnarray}
Now, we deal with the stochastic part I. Let $Y_{i,jk} = X_{ij} X_{ik} - \E(X_{ij} X_{ik})$. Clearly, $\E(Y_{i,jk}) = 0$ and $I = \sum_{i=1}^n w(t,i) Y_{i,jk}$. Let $B_{i,0} = I_d$ and $B_{i,m} = A_i \cdots A_{i-m+1}$ for $m \ge 1$. Then, $\sup_{i} \rho(B_{i,m}) \le \bar{\rho}^m$ and we have the moving-average (MA) representation of $\vx_i = \sum_{m=0}^\infty B_{i,m} \ve_{i-m}$.  Let $\mbf\xi = (\ve_n^\top, \ve_{n-1}^\top, \cdots)^\top$, $W_t = \diag(w(t,n),\cdots,w(t,1))$ be $n \times n$ diagonal matrix, and
\begin{align*}
&\tilde{B}^{(j)}= \\
&\left(
\begin{array}{cccccccc}
B_{n,0,j\cdot} & B_{n,1,j\cdot}  & B_{n,2,j\cdot}  & \cdots & B_{n,n-1,j\cdot}  & B_{n,n,j\cdot}  & \cdots &   \\
0 & B_{n-1,0,j\cdot} & B_{n-1,1,j\cdot} & \cdots & B_{n-1,n-2,j\cdot} &  B_{n-1,n-1,j\cdot} & \cdots & \\
0 & 0 & B_{n-2,0,j\cdot} & \cdots & B_{n-2,n-3,j\cdot} & B_{n-2,n-2,j\cdot} & \cdots &  \\
\vdots & \vdots & \vdots & \ddots & \vdots & \vdots & \vdots \\
0 & 0 & 0 & \cdots & B_{1,0,j\cdot} &  B_{1,1,j\cdot} & \cdots & \\
\end{array} \right).
\end{align*}
Then, we can write $I = \mbf\xi^\top (\tilde{B}^{(j)})^\top W_t \tilde{B}^{(k)} \mbf\xi - \tr((\tilde{B}^{(j)})^\top W_t \tilde{B}^{(k)})$. By the Hanson-Wright inequality \cite[Theorem 1.1]{rudelsonvershynin2013a}, there exists a constant $C:=C(C_1)$ such that for all $x>0$
\begin{eqnarray*}
& (*) :=& \Prob \left(| \mbf\xi^\top (\tilde{B}^{(j)})^\top W_t \tilde{B}^{(k)} \mbf\xi - \tr((\tilde{B}^{(j)})^\top W_t \tilde{B}^{(k)}) | \ge x \right) \\
&\le& 2 \exp \left\{ -C \min\left[ {x^2 \over |(\tilde{B}^{(j)})^\top W_t \tilde{B}^{(k)}|_F^2 }, {x \over \rho((\tilde{B}^{(j)})^\top W_t \tilde{B}^{(k)}) } \right] \right\}.
\end{eqnarray*}
Observe that $(\tilde{B}^{(j)})^\top W_t \tilde{B}^{(k)}$ has the same nonzero eigenvalues as the $n \times n$ matrix $W_t^{1/2} \tilde{B}^{(k)} (\tilde{B}^{(j)})^\top W_t^{1/2}$. Therefore, by the Cauchy-Schwartz inequality, we get
\begin{eqnarray*}
|W_t^{1/2} \tilde{B}^{(k)} (\tilde{B}^{(j)})^\top W_t^{1/2}|_F^2 &=& \tr\left[W_t^{1/2} \tilde{B}^{(j)} (\tilde{B}^{(k)})^\top W_t \tilde{B}^{(k)} (\tilde{B}^{(j)})^\top W_t^{1/2} \right] \\
&=& \tr\left[ (\tilde{B}^{(k)})^\top W_t \tilde{B}^{(k)} (\tilde{B}^{(j)})^\top W_t \tilde{B}^{(j)} \right] \\
&\le& |(\tilde{B}^{(k)})^\top W_t \tilde{B}^{(k)}|_F \cdot |(\tilde{B}^{(j)})^\top W_t \tilde{B}^{(j)}|_F^2
\end{eqnarray*}
and by the definition of matrix spectral norm
\begin{eqnarray*}
\rho(W_t^{1/2} \tilde{B}^{(k)} (\tilde{B}^{(j)})^\top W_t^{1/2}|) &\le& \rho(W_t^{1/2} \tilde{B}^{(k)}) \rho((\tilde{B}^{(j)})^\top W_t^{1/2}) \\
&=& \rho^{1/2}((\tilde{B}^{(k)})^\top W_t \tilde{B}^{(k)}) \rho^{1/2}((\tilde{B}^{(j)})^\top W_t \tilde{B}^{(j)}).
\end{eqnarray*}
Therefore, (*) is bounded by
%\begin{eqnarray*}
% 2 \exp \left\{ -C \min\left[ {x^2 \over \max_{j \le p} |W_t^{1/2} \tilde{B}^{(j)} (\tilde{B}^{(j)})^\top W_t^{1/2} |_F^2 }, {x \over \max_{j \le p} \rho(W_t^{1/2} \tilde{B}^{(j)} (\tilde{B}^{(j)})^\top W_t^{1/2}) } \right] \right\}.
%\end{eqnarray*}
\begin{align*}
 2 \exp \left\{ {-C \min\left[ {x^2 \over \max_{j \le p} |W_t^{1/2} \tilde{B}^{(j)} (\tilde{B}^{(j)})^\top W_t^{1/2} |_F^2 }\right.}\right.,\\
 \left.\left.{x \over \max_{j \le p} \rho(W_t^{1/2} \tilde{B}^{(j)} (\tilde{B}^{(j)})^\top W_t^{1/2}) } \right] \right\}.
\end{align*}
For $i=1,\cdots,n$ and $l = 0,\cdots,n-i$, let
$$
\gamma^{(j)}_{i,l} = \sum_{m=0}^\infty [w(t,i) w(t,i+l)]^{1/2} |B_{i,m,j\cdot} B_{i+l,m+l,j\cdot}^\top|.
$$
By the Cauchy-Schwartz inequality and the spectral norm bound $\sup_{i} \rho(B_{i,m}) \le \bar{\rho}^m$, we have
\begin{eqnarray*}
\gamma^{(j)}_{i,l} &\le& [w(t,i) w(t,i+l)]^{1/2} \sum_{m=0}^\infty \left(\sum_{k=1}^p B_{i,m,jk}^2 \right)^{1/2} \left(\sum_{k=1}^p B_{i+l,m+l,jk}^2 \right)^{1/2} \\
&\le& [w(t,i) w(t,i+l)]^{1/2} \sum_{m=0}^\infty \bar{\rho}^m  \bar{\rho}^{m+l} = [w(t,i) w(t,i+l)]^{1/2} { \bar{\rho}^l \over 1- \bar{\rho}^2}.
\end{eqnarray*}
Then, it follows that
%\begin{eqnarray*}
%|W_t^{1/2} \tilde{B}^{(j)} (\tilde{B}^{(j)})^\top W_t^{1/2} |_F^2 &\le& {\sum_{i=1}^n w(t,i)^2 \over (1-\bar{\rho}^2)^2} + 2 \sum_{l=1}^{n-1} \sum_{i=1}^{n-l} [w(t,i) w(t,i+l)]^{1/2} { \bar{\rho}^l \over (1- \bar{\rho}^2)^2} \\
%&\le&  {2 \left[\sum_{l=0}^{n-1} \bar{\rho}^{2l} \right] \left[\sum_{i=1}^n w(t,i)^2 \right] \over (1- \bar{\rho}^2)^2 } \\
%&\le& C(\bar{\rho}) \sum_{i=1}^n w(t,i)^2 = {C(\bar{\rho}) \over n b_n}
%\end{eqnarray*}
\begin{align*}
& \quad|W_t^{1/2} \tilde{B}^{(j)} (\tilde{B}^{(j)})^\top W_t^{1/2} |_F^2 \\
\le& \quad {\sum_{i=1}^n w(t,i)^2 \over (1-\bar{\rho}^2)^2} + 2 \sum_{l=1}^{n-1} \sum_{i=1}^{n-l} [w(t,i) w(t,i+l)]^{1/2} { \bar{\rho}^l \over (1- \bar{\rho}^2)^2} \\
\le& \quad  {2 \left[\sum_{l=0}^{n-1} \bar{\rho}^{2l} \right] \left[\sum_{i=1}^n w(t,i)^2 \right] \over (1- \bar{\rho}^2)^2 } \\
\le& \quad C(\bar{\rho}) \sum_{i=1}^n w(t,i)^2 = {C(\bar{\rho}) \over n b_n}
\end{align*}
and
%\begin{eqnarray*}
%\rho(W_t^{1/2} \tilde{B}^{(j)} (\tilde{B}^{(j)})^\top W_t^{1/2}) &\le& 2 \left[ {1\over 1-\bar{\rho}^2} \max_{1\le i \le n} w(t,i) + \sum_{l=1}^{n-1} {\bar{\rho}^l \over 1-\bar{\rho}^2} \max_{1 \le i \le n-l} [w(t,i) w(t,i+l)]^{1/2} \right] \\
%&\le& {2 \left[\sum_{l=0}^{n-1} \bar{\rho}^{l} \right] \left[\max_{1 \le i \le n} w(t,i) \right] \over 1- \bar{\rho}^2 } \le {C(\bar{\rho}) \over n b_n}.
%\end{eqnarray*}
\begin{align*}
& \quad\rho(W_t^{1/2} \tilde{B}^{(j)} (\tilde{B}^{(j)})^\top W_t^{1/2}) \\
\le& \quad 2 \left[ {1\over 1-\bar{\rho}^2} \max_{1\le i \le n} w(t,i) + \sum_{l=1}^{n-1} {\bar{\rho}^l \over 1-\bar{\rho}^2} \max_{1 \le i \le n-l} [w(t,i) w(t,i+l)]^{1/2} \right] \\
\le& \quad {2 \left[\sum_{l=0}^{n-1} \bar{\rho}^{l} \right] \left[\max_{1 \le i \le n} w(t,i) \right] \over 1- \bar{\rho}^2 } \le {C(\bar{\rho}) \over n b_n}.
\end{align*}

Therefore, we have that for any $x > 0$
\begin{equation}
\label{eqn:large-dev-Sigma0_stochastic}
\sup_{t \in [b_n 1-b_n]} \Prob(|\sum_{i=1}^n w(t,i) Y_{i,jk}| \ge x) \le 2 \exp[ - C n b_n \min(x^2, x) ].
\end{equation}
Now, by (\ref{eqn:large-dev-Sigma0-bias}) and using the union bound applied to (\ref{eqn:large-dev-Sigma0_stochastic}), we obtain that there exists a constant $C$ which only depends on $\bar{\rho}, C_0, C_1$ such that
$$
|\hat\Sigma_{t,0} - \Sigma_{t,0}|_\infty \le C \left( b_n^2 + \sqrt{\log{d} \over n b_n} \right)
$$
holds with probability $\ge 1 - 2 d^{-1}$. Similar argument applied to $m=1$ shows that the lag-one autocovariance matrix obeys the same bound in (\ref{eqn:large-dev-Sigma0}).
\end{proof}

\vspace{0.1in}
\begin{proof}[\textbf{Proof of Lemma \ref{lem:spatial_A}}] Denote $l = d/2$. For $0<\alpha<1$, note that
$$\underset{1 \le m\leq d}{\text{max}}\sum_{k=1}^d A_{mk}^\alpha=2\sum_{k=1}^{l-1} A_{lk}^\alpha + A_{ll}^\alpha=2\sum_{k=1}^{l-1} A_{lk}^\alpha+1.$$
Then, we have
\begin{align*}
\sum_{k=1}^{l-1}A_{lk}^\alpha=\quad&\sum_{k=1}^{l-1}\frac{1}{(1+{|l-k|^2}/{d^{2r}})^{\gamma\alpha}}\\
=\quad&\sum_{k=1}^{l-1}\frac{d^{2r\gamma\alpha}}{(d^{2r}+k^2)^{\gamma\alpha}}\\
\leq\quad&1 + \int_1^l \frac{d^{2r\gamma\alpha}}{(d^{2r}+x^2)^{\gamma\alpha}}dx\\
=\quad&1+d^r\int_{d^{-r}}^{d^{1-r}/2}\frac{1}{(1+x^2)^{\gamma\alpha}}dx\\
\leq\quad& C d^r \int_{1}^{d^{1-r}/2} x^{-2\gamma\alpha}dx,
\end{align*}
where $C > 0$ is a constant depending only on $r,\gamma,\alpha$. Then, it follows that
$$
\max_{1 \le m \le d} \sum_{k=1}^d A_{mk}^\alpha = \left\{
\begin{array}{cc}
O(d^r) & \text{if } 2 \gamma \alpha > 1 \\
O(d^r \log{d}) & \text{if } 2 \gamma \alpha = 1 \\
O(d^{r+(1-r)(1-2\gamma\alpha)}) & \text{if } 1 < 2 \gamma \alpha < 1 \\
\end{array}
\right. .
$$
So, $A\in {\cal G}_{\alpha}^{-}(s, M_d)$ for our choice of $s$ and $M_d$. Since $A$ is symmetric, $A \in {\cal G}_{\alpha}(s, M_d)$. Next, we show that $A \in {\cal G}_{\alpha,\beta}(s, M_d, L_d)$. Observe that $|\text{supp}(A)|=(3d-2)d/4$. By construction, $A_{mk}\leq u$ is equivalent to $\pi(h_m^\circ,h_k^\circ)\geq f^{-1}(u)=(u^{-\frac{1}{\gamma}}-1)^\frac{1}{2}$. Therefore, $A_{mk}\leq u$ if and only if $|m-k|\geq d^r(u^{-\frac{1}{\gamma}}-1)^\frac{1}{2}$. Since $d^r(u^{-\frac{1}{\gamma}}-1)^\frac{1}{2}$ is monotone decreasing with respect to $u$,
$$\exists \ubar{u}\in (0,u_0]\text{, s.t. }\frac{d}{2}-2<d^r(\ubar{u}^{-\frac{1}{\gamma}}-1)^\frac{1}{2}\leq \frac{d}{2}-1.$$
So we only need to consider $u\in [\ubar{u},u_0]$ because if $u\in (0,\ubar{u})$, then $|\{(m,k):0<A_{mk}\leq u \}|=|\{(m,k): |m-k|\geq d/2\}|=0$. Therefore, if we choose $L_d$ such that $L_d \ubar{u}^\beta \gtrsim 1$, then $A \in {\cal G}_{\alpha,\beta}(s, M_d, L_d)$. That is, $L_d = C \ubar{u}^{-\beta} = C d^{2(1-r)\gamma\beta}$.
\end{proof}

\vspace{0.1in}
\begin{proof}[\textbf{Proof of Theorem \ref{thm:rate-subGaussian-instantaneous}}] Put $\varepsilon = C(b_n^2 + \sqrt{(\log d) / (n b_n)})$. Recall that $A_i^\top = \Sigma_{i-1,0}^{-1} \Sigma_{i-1,1}$. By Lemma \ref{lem:large-dev-Sigma0}, we have, with probability at least $1 - 2 d^{-1}$, that $|\hat\Sigma_{t,0} - \Sigma_{t,0}|_\infty \le \varepsilon$ and $|\hat\Sigma_{t,1} - \Sigma_{t,1}|_\infty \le \varepsilon$ for any $t \in [b_n, 1-b_n]$. Then, with such high probability, the true transition matrix $A_i$ is feasible for the minimization program (\ref{eqn:tvvar-clime}) with $\tau \ge (1+M_d) \varepsilon$ because
\begin{eqnarray*}
|\hat\Sigma_{i-1,1} - \hat\Sigma_{i-1,0} A_i^\top|_\infty & = & |\hat\Sigma_{i-1,1} - \hat\Sigma_{i-1,0} \Sigma_{i-1,0}^{-1} \Sigma_{i-1,1}|_\infty \\
&\le& |\hat\Sigma_{i-1,1} - \Sigma_{i-1,1}|_\infty + |(I - \hat\Sigma_{i-1,0} \Sigma_{i-1,0}^{-1}) \Sigma_{i-1,1} |_\infty \\
&\le& \varepsilon + |\hat\Sigma_{i-1,0} - \Sigma_{i-1,0}|_\infty |A_i^\top|_{\ell^1} \le \tau.
\end{eqnarray*}
Hence, $|\hat{A}_i|_1 \le |A_i|_1$. But then
\begin{eqnarray*}
&&|\hat{A}_i^\top - A_i^\top|_\infty \\
&=& | \hat{A}_i^\top - \Sigma_{i-1,0}^{-1} \Sigma_{i-1,1} |_\infty \\
&=& | \Sigma_{i-1,0}^{-1} ( \Sigma_{i-1,0} \hat{A}_i^\top - \Sigma_{i-1,1}) |_\infty \\
&=& | \Sigma_{i-1,0}^{-1} ( \Sigma_{i-1,0} \hat{A}_i^\top - \hat{\Sigma}_{i-1,0} \hat{A}_i^\top + \hat{\Sigma}_{i-1,0} \hat{A}_i^\top - \hat{\Sigma}_{i-1,1} + \hat{\Sigma}_{i-1,1} - \Sigma_{i-1,1}) |_\infty \\
&\le& | \Sigma_{i-1,0}^{-1} ( \hat{\Sigma}_{i-1,0} - \hat{\Sigma}_{i-1,1}) \hat{A}_i^\top |_\infty + | \Sigma_{i-1,0}^{-1 }( \hat{\Sigma}_{i-1,0} \hat{A}_i^\top - \hat{\Sigma}_{i-1,1})|_\infty \\
&& \qquad + |\Sigma_{i-1,0}^{-1} (\hat{\Sigma}_{i-1,1} - \Sigma_{i-1,1}) |_\infty \\
&\le& | \Sigma_{i-1,0}^{-1}|_{\ell^1} (|\hat{\Sigma}_{i-1,0} - \hat{\Sigma}_{i-1,1}|_\infty |\hat{A}_i|_{\ell^1} + \tau + \varepsilon) \\
&\le& | \Sigma_{i-1,0}^{-1}|_{\ell^1} (\varepsilon |A_i|_{\ell^1} + \tau + \varepsilon) \\
&\le & 2 \tau |\Sigma_{i-1,0}^{-1}|_{\ell^1}.
\end{eqnarray*}
Let $u \ge 0$ and $T_j = \{m : |A_{i,jm}| \ge u\}, \quad j = 1,\cdots,d$. Define $D(u) = \max_{j \le d} \sum_{m=1}^d (|A_{i,jm}| \wedge u)$. Then, by the triangle inequality and the equivalence between (\ref{eqn:tvvar-clime}) and (\ref{eqn:tvvar-subproblems}), we have uniformly in $j = 1,\cdots,d$,
\begin{eqnarray*}
|[\hat{A}_i - A_i]_{j*}|_1 &\le& |[\hat{A}_i]_{j,T_j^c}|_1 + |[A_i]_{j,T_j^c}|_1 + |[\hat{A}_i - A_i]_{j,T_j}|_1 \\
&=& |[A_i]_{j*}|_1 - |[\hat{A}_i]_{j,T_j}|_1 + |[A_i]_{j,T_j^c}|_1 + |[\hat{A}_i - A_i]_{j,T_j}|_1 \\
&\le& 2 |[A_i]_{j,T_j^c}|_1 + 2 |[\hat{A}_i - A_i]_{j,T_j}|_1 \\
&\le& 2 |[A_i]_{j,T_j^c}|_1 + 4 \tau |\Sigma_{i-1,0}^{-1}|_{\ell^1} |T_j| \\
&\le& 2 D(u) (1 + 2 \tau |\Sigma_{i-1,0}^{-1}|_{\ell^1} / u).
\end{eqnarray*}
Choose $u = 2 \tau |\Sigma_{i,0}^{-1}|_{\ell^1}$. Since $D(u) \le s u^{1-\alpha}$ for $A_i \in {\cal G}_\alpha(s, M_d)$, it follows that
\begin{equation*}
|[\hat{A}_i - A_i]_{j*}|_1 \le 4 D(u) \le 4 s u^{1-\alpha} = C(\alpha) s (|\Sigma_{i,0}^{-1}|_{\ell^1} \tau)^{1-\alpha}.
\end{equation*}
The analysis of the other constraint is similar. Now, (\ref{eqn:rate-instantaneous-spectral}) and (\ref{eqn:rate-instantaneous-F}) are immediate  in view of $\rho^2(M) \le |M|_{\ell^1} |M|_{\ell^\infty}$ and $|M|_F^2 \le d |M|_\infty |M|_{\ell^1}$ for any $d \times d$ matrix $M$. 
\end{proof}
\vspace{0.1in}

\begin{proof}[\textbf{Proof of Theorem \ref{thm:partial-pattern-recovery}}]
Suppose $(m,k) \notin S_i$, i.e. $A_{i,mk}=0$. On the event $G = \{|\hat{A}_i-A_i|_\infty \le u_\sharp \}$, we have $|\hat{A}_{i,mk}| \le u_\sharp$ and $(m,k) \notin \hat{S}_i$. Therefore, by Theorem \ref{thm:rate-subGaussian-instantaneous}, we have $\Prob(\hat{S}_i \subset S_i) \ge 1-2d^{-1}$. On the other hand, suppose that $|A_{i,mk}|>2u_\sharp$, then on the event $G$, it follows from the triangle inequality that $|A_{i,mk}| - |\hat{A}_{i,mk}| \le |\hat{A}_{i,mk}-A_{i,mk}| \le u_\sharp$. Therefore, $|\hat{A}_{i,mk}| > u_\sharp$ and we have $\Prob(\{ (m, k) : |A_{i,mk}| > 2 u_\sharp \} \subset \hat{S}_{i}) \ge 1 - 2 d^{-1}$.
\end{proof}
\vspace{0.1in}

\begin{proof}[\textbf{Proof of Theorem \ref{eqn:pattern-recovery-consistency}}]
If $|S_i| = 0$, then the first claim is trivial. So we may assume that $S_i \neq \emptyset$. Suppose $(m,k) \notin S_i$, then on the event $G = \{|\hat{A}_i-A_i|_\infty \le u_\sharp \}$, we have $|\hat{A}_{i,mk}| \le u_\sharp$ and $(m,k) \notin \hat{S}_i$. Therefore, $\hat{S}_i \cap S_i^c = \emptyset$ on $G$ and $\Prob(\FPR_i=0) \ge 1-2d^{-1}$ by Theorem \ref{thm:rate-subGaussian-instantaneous}. On the other hand, let $N_i(u)=\{(m,k) : 0 < |A_{i,mk}| \le 2u\}$. For any $u>0$, $N_i(u) \subset S_i$ and $\hat{S}_i^c \cap S_i \subset N_i(u) \cup (\hat{S}_i^c \cap S_i \cap N_i(u)^c)$. Hence, $|\hat{S}_i^c \cap S_i| \le |N_i(u)| + |\hat{S}_i^c \cap S_i \cap N_i(u)^c|$. If $(m,k) \in  \hat{S}_i^c$ and $|A_{i,mk}|>2u_\sharp$, then $\hat{S}_i^c \cap S_i \cap N_i(u_\sharp)^c = \emptyset$ on $G$. Choose $u=u_\sharp$ and then on the event $G$, we have
$$
\FNR_i = {|\hat{S}_i^c \cap S_i| \over |S_i|} \le {|N_i(u_\sharp)| \over |S_i|} + {|\hat{S}_i^c \cap S_i \cap N_i(u_\sharp)^c| \over |S_i|} \le 2^\beta L_d u_\sharp^\beta,
$$
from which the theorem follows.
\end{proof}

\section*{Acknowledgements}
We thank Fang Han for providing the stock data in Section \ref{subsec:finance-data}. The high-performance computing work was done on the Illinois Campus Cluster Program (ICCP) at the University of Illinois at Urbana-Champaign.

\bibliographystyle{plain}
\bibliography{tvvar}

\appendix
\section{Detailed setup of the simulation studies} \label{App:setup_sim}
We use \texttt{sugm.generator()} to genereate sparce matrice from multivariate normal distributions with four graph structures-  hub, cluster, band and random. Besides taking the patterns of graphs, the \texttt{sugm.generator()} also takes other four critical input arguments which are \texttt{g}, \texttt{prob}, \texttt{v} and \texttt{u} shown in Table \ref{tab:arg}. The detailed setup of these fours parameters in our simulation is shown in Table \ref{tab:val_arg}.
% Table generated by Excel2LaTeX from sheet 'hub'
\begin{table}[htbp]
  \centering
  \caption{The meanings of \texttt{sugm.generator()}'s arguments }
    \begin{tabular}{cl}
    \toprule
    Arguments & Meanings \\
    \midrule
     \texttt{g}   & Represents the number of hubs or clusters when the pattern is ``hub'' or \\
                  & ``cluster''\\
    \texttt{prob} & Represents the probability that an off-diagonal entry will be non-zero.\\
    \texttt{v}    & Assigns the values to the off-diagonal entries in the precision matrix and \\
                  & controls the magnitude of partial correlations with u. \\
    \texttt{u}    & A positive number which is added to the diagonal entries of the precision \\
                  & matrix.\\
    \bottomrule
    \end{tabular}%
  \label{tab:arg}%
\end{table}%
% Table generated by Excel2LaTeX from sheet 'Sheet1'
\begin{table}[htbp]
  \centering
  \caption{Detailed setup of \texttt{sugm.generator()}}
    \begin{tabular}{rrrrrrrrrr}
    \toprule
     & \multicolumn{4}{c}{Hub}       &  & \multicolumn{4}{c}{Cluster} \\
     \cmidrule(lr){2-5}
     \cmidrule(lr){7-10}
    \texttt{d}     & \texttt{g}     & \texttt{prob}  & \texttt{v}     & \texttt{u}     &  \texttt{d}     & \texttt{g}     & \texttt{prob}  & \texttt{v}     & \texttt{u} \\\hline
    20    & 8     & NULL  & 0.001 & 10    & 20    & 8     & NULL  & 0.001 & 10 \\
    30    & 10    & NULL  & 0.001 & 10    & 30    & 10    & NULL  & 0.001 & 10 \\
    40    & 15    & NULL  & 0.001 & 10    & 40    & 15    & NULL  & 0.001 & 10 \\
    50    & 20    & NULL  & 0.001 & 10    & 50    & 20    & NULL  & 0.001 & 10 \\\toprule
     & \multicolumn{4}{c}{Band}      &  & \multicolumn{4}{c}{Random} \\
     \cmidrule(lr){2-5}
     \cmidrule(lr){7-10}
     \texttt{d}     & \texttt{g}     & \texttt{prob}  & \texttt{v}     & \texttt{u}     & \texttt{d}     & \texttt{g}     & \texttt{prob}  & \texttt{v}     & \texttt{u}\\\hline
    20    & 1     & NULL  & 0.001 & 10    & 20    & NULL  & 0.001  & 0.001 & 10 \\
    30    & 1     & NULL  & 0.001 & 10    & 30    & NULL  & 0.001 & 0.001 & 10 \\
    40    & 1     & NULL  & 0.001 & 10    & 40    & NULL  & 0.001 & 0.001 & 10 \\
    50    & 1     & NULL  & 0.001 & 10    & 50    & NULL  & 0.001 & 0.001 & 10 \\
    \bottomrule
    \end{tabular}%
  \label{tab:val_arg}%
\end{table}%

\newpage
\section{Examples for four graph structures}\label{four_sparse_patterns}
\begin{figure}[H]
  \centering
%  \label{fig:four_sparse_patterns}
  \caption{Examples of the baseline coefficient matrices in four graph structures. Zero entries are colored in grey. Positive entries and negative entries are colored in red and black respectively.}
  \subfloat[Hub]{\label{fig:hub_ex}\includegraphics[width=60mm]{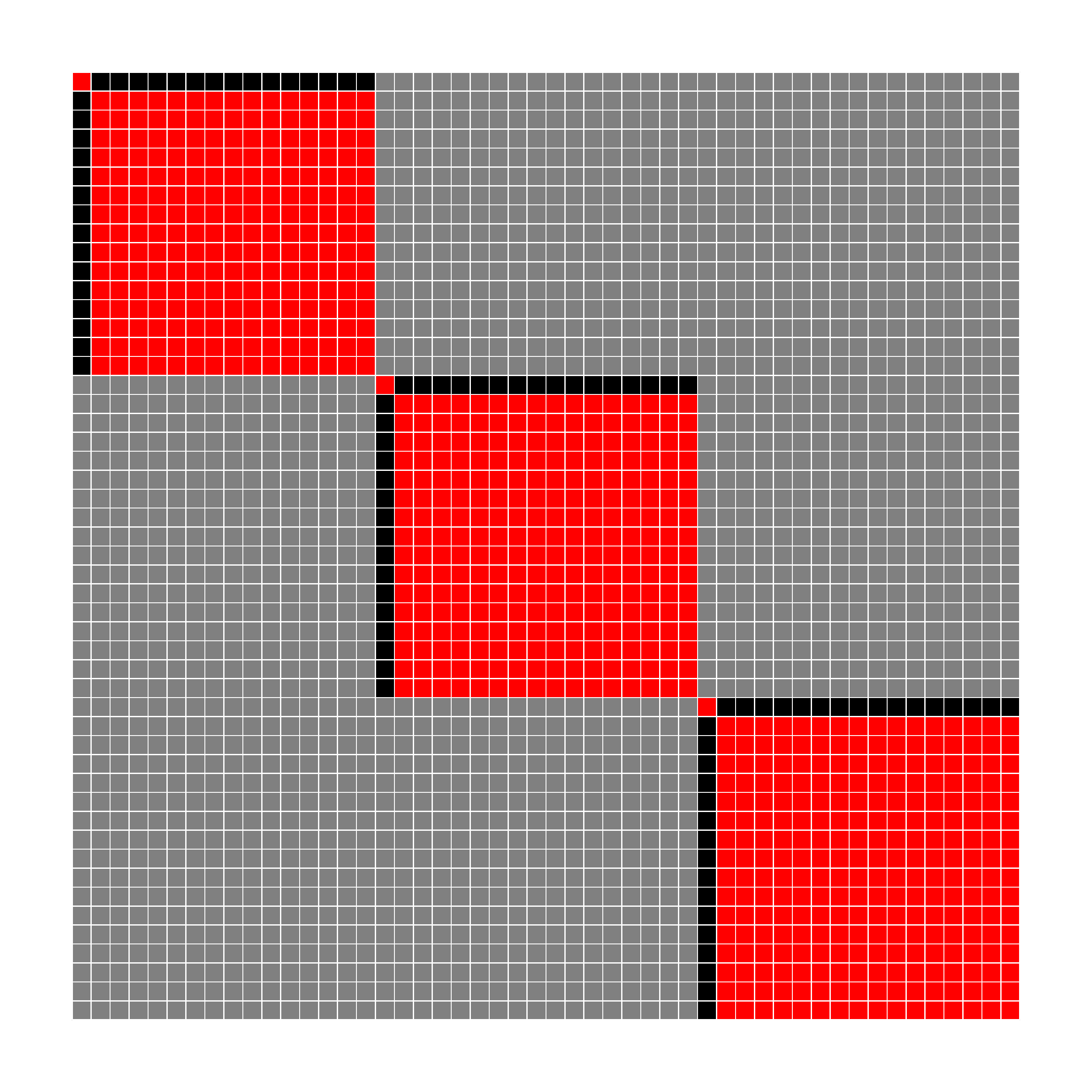}}
  \subfloat[Cluster]{\label{fig:cluster_ex}\includegraphics[width=60mm]{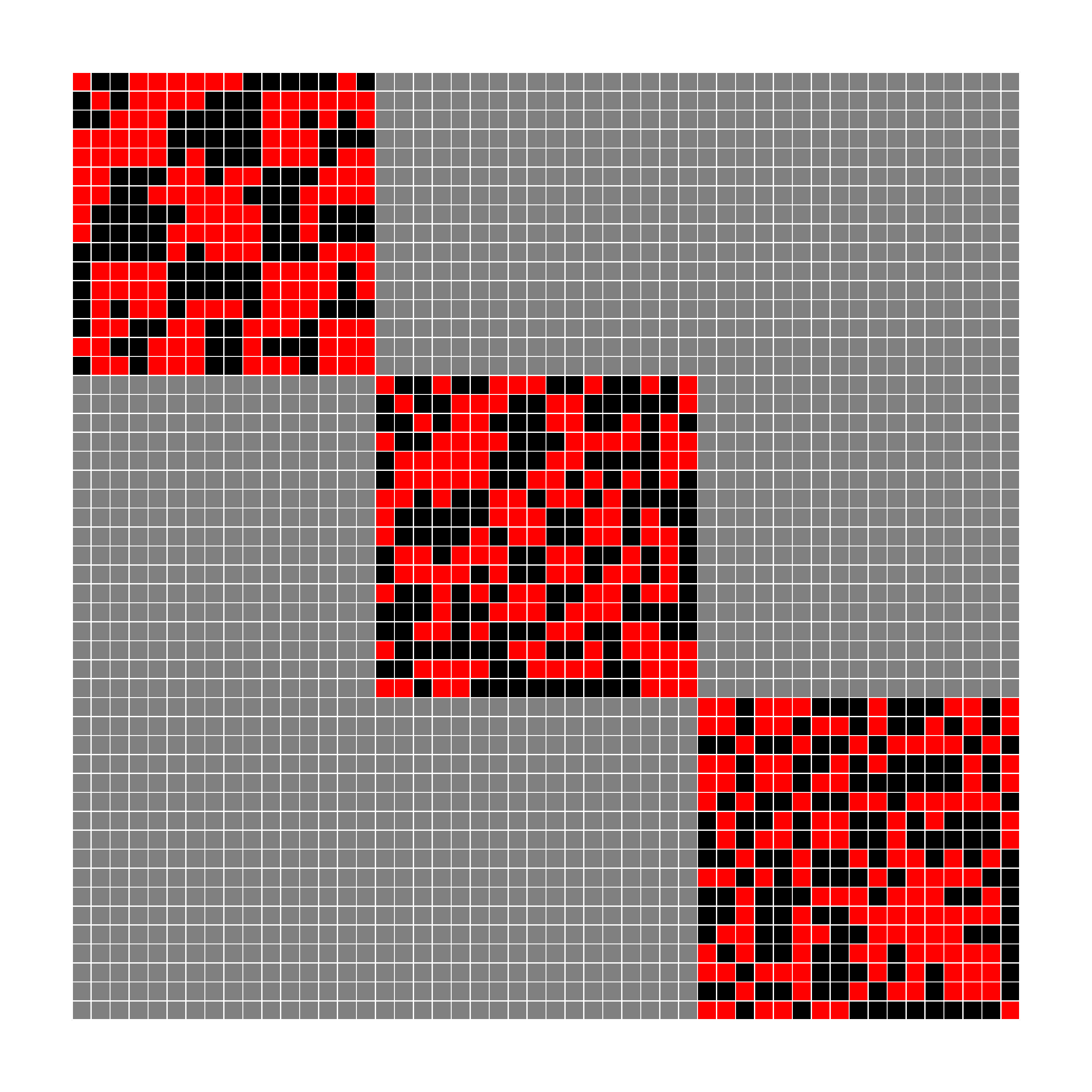}}
  \\
  \subfloat[Band]{\label{fig:band_ex}\includegraphics[width=60mm]{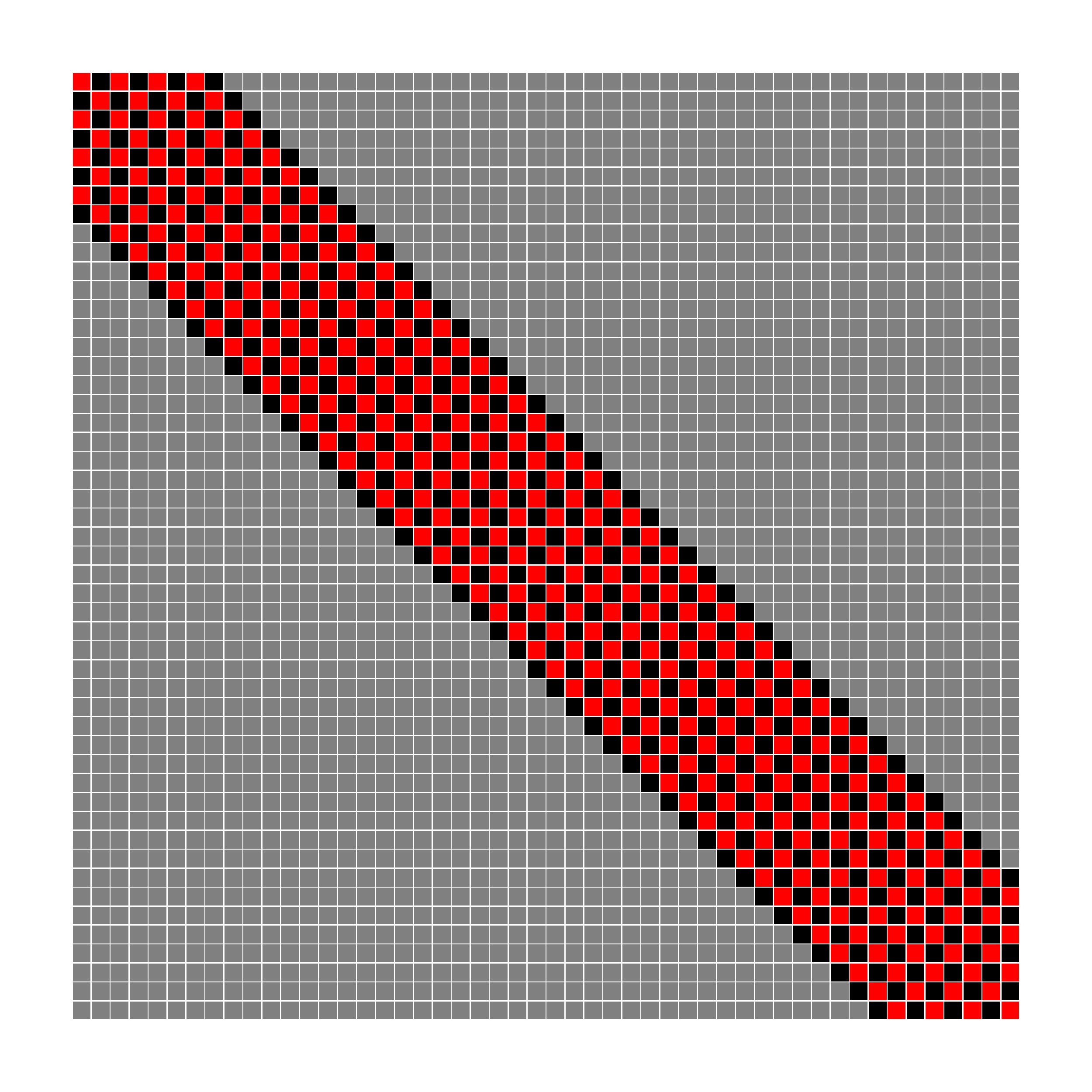}}
  \subfloat[Random]{\label{fig:random_ex}\includegraphics[width=60mm]{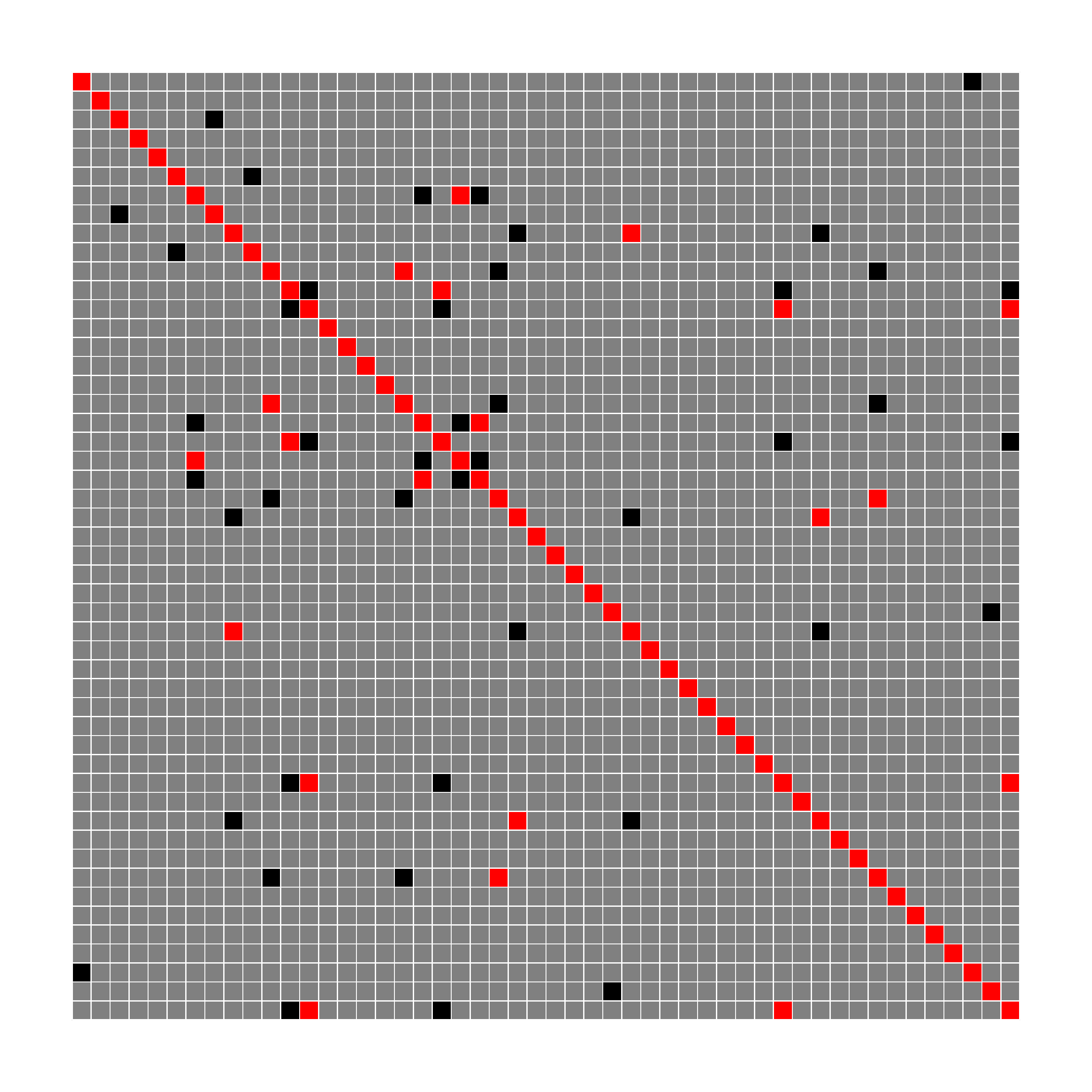}}
\end{figure}

\newpage
\section{ROC curves based on the true thresholding parameters $u_\sharp$}\label{App:true_ROC}
\begin{figure}[H]
  \centering
  \label{figur}\caption{ROC curves under different settings}
  \subfloat[Pattern is hub]{\label{fig:roc_hub_fix_tau}\includegraphics[width=60mm,height=60mm]{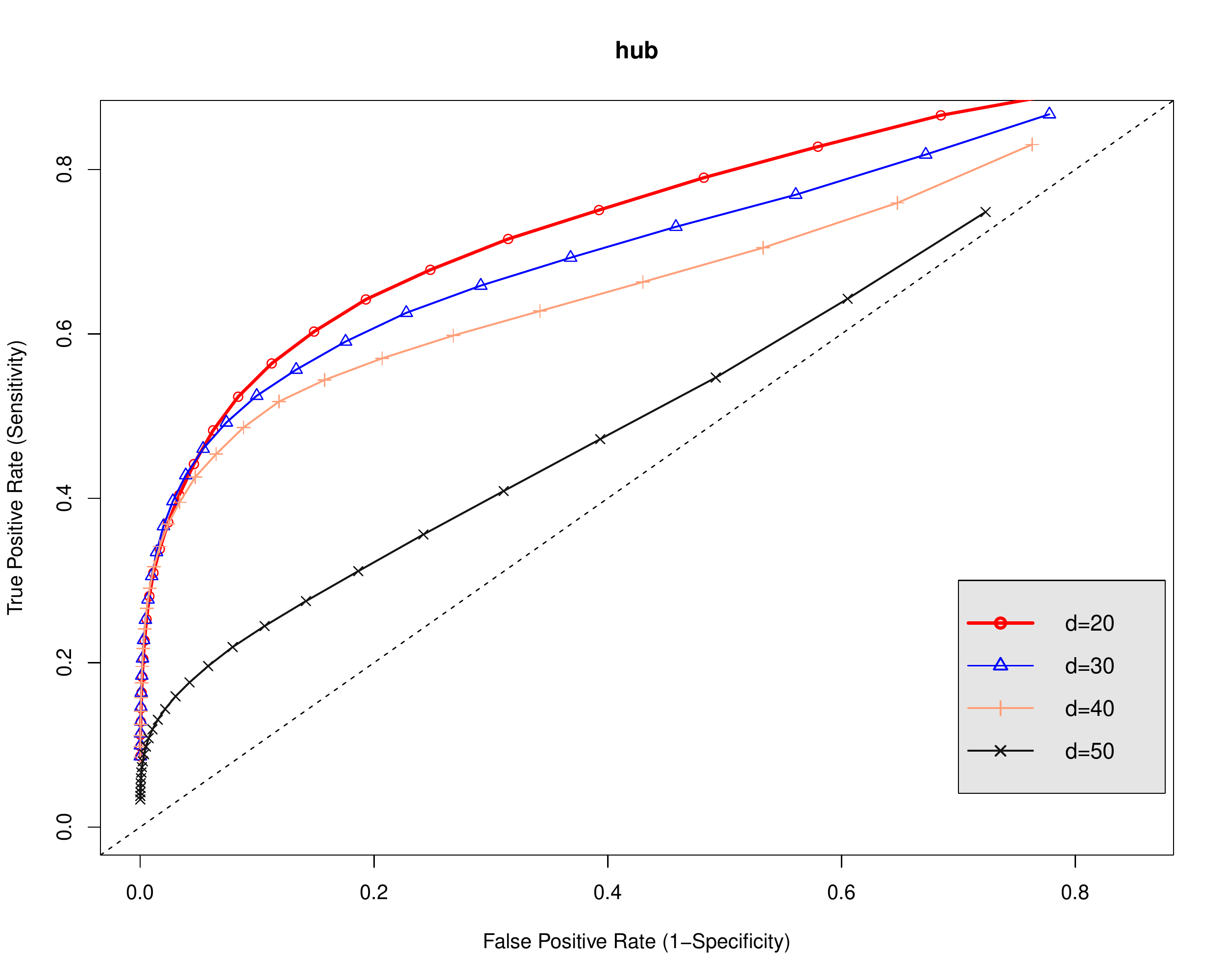}}
  \subfloat[Pattern is cluster]{\label{fig:roc_cluster_fix_tau}\includegraphics[width=60mm,height=60mm]{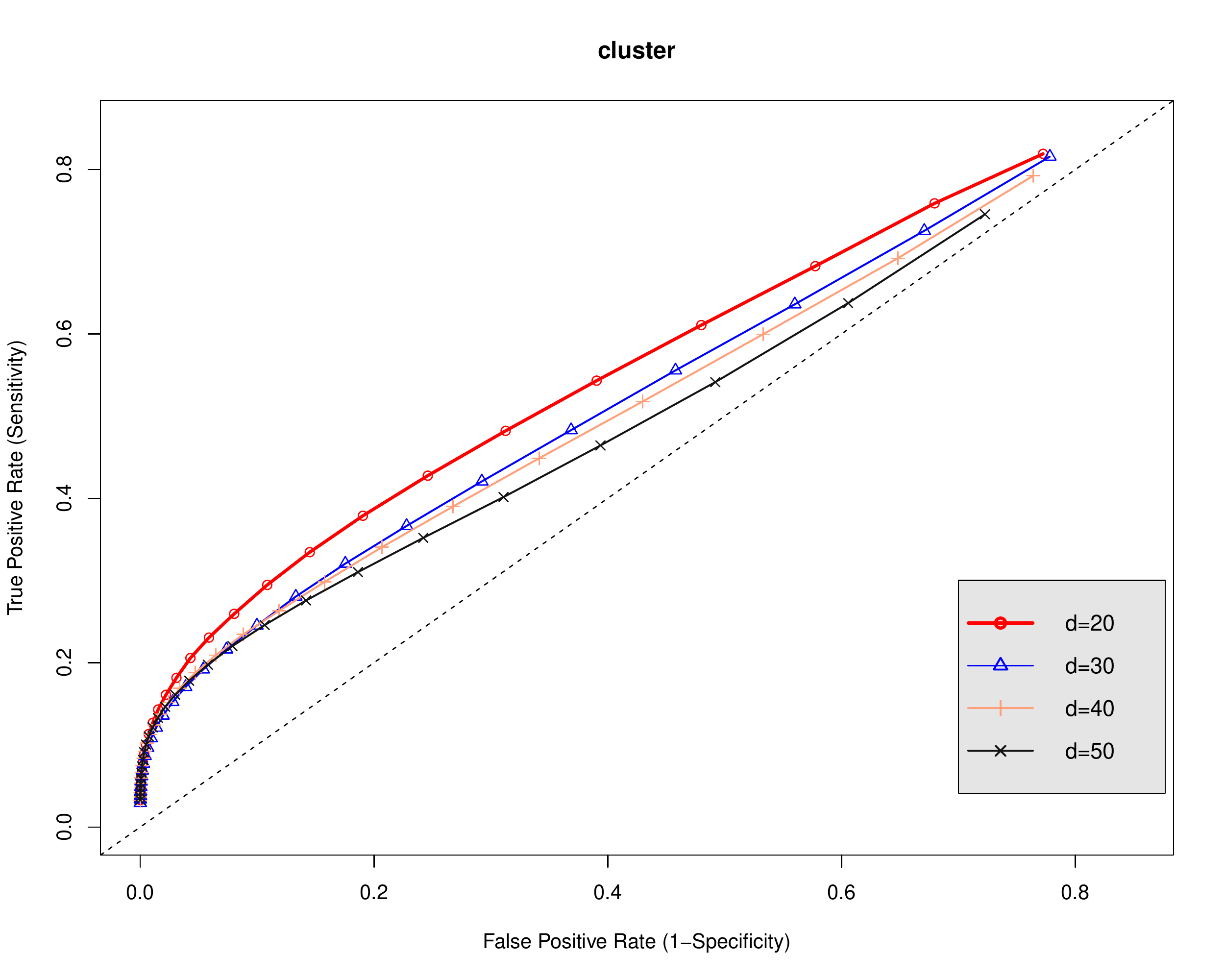}}
  \\
  \subfloat[Pattern is band]{\label{fig:roc_band_fix_tau}\includegraphics[width=60mm,height=60mm]{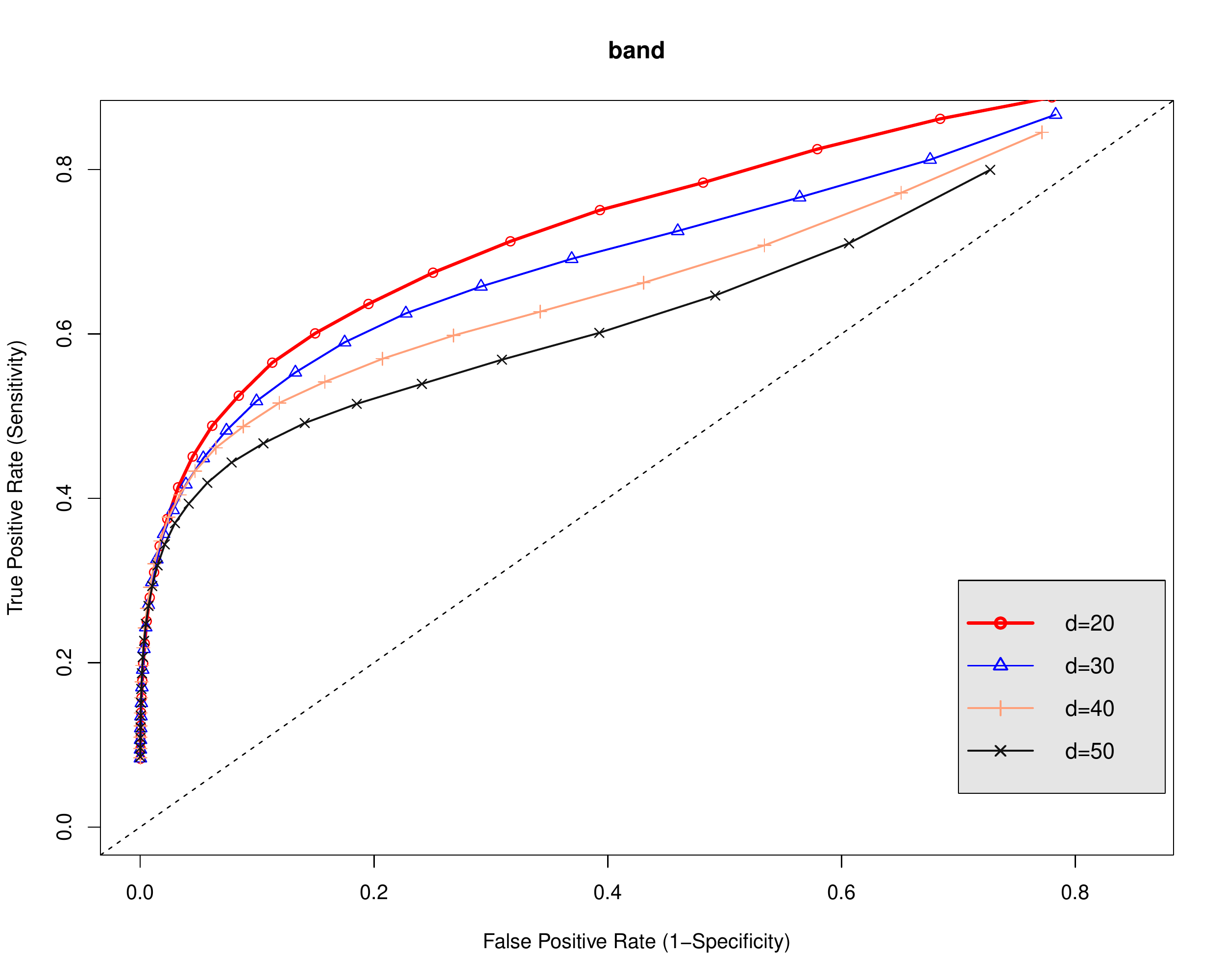}}
  \subfloat[Pattern is random]{\label{fig:roc_random_fix_tau}\includegraphics[width=60mm,height=60mm]{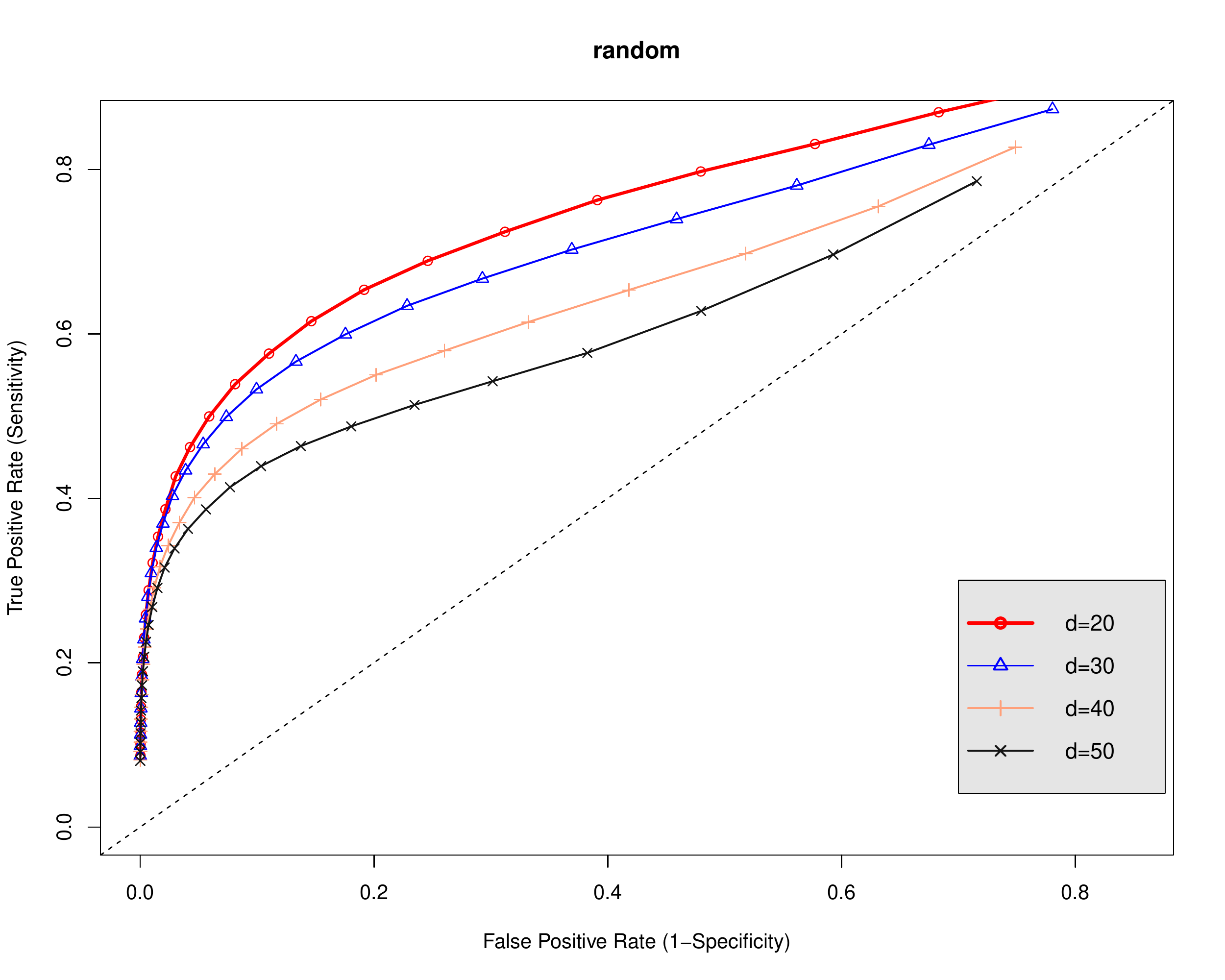}}
\end{figure}

\newpage
\section{Selected stocks and currencies in real data analysis}\label{App:sel_stocks_currencies}
\begin{table}[htbp]
  \centering
  \caption{Part of the 30 selected stocks}
    \begin{tabular}{lll}
    \toprule
          & Stock (Abb.) & Sector (Abb.)\\
    \midrule
    1     & Kellogg Co. (Kellogg) & Consumer Staples (CS) \\
    2     & Target Corp. (Target) & Consumer Discretionary (CD) \\
    3     & Boeing Company (Boeing) & Industrial (IND)\\
    4     & CME Group Inc. (CME) & Financials (FIN)\\
    5     & Prudential Financial (Prudential) & Financials (FIN)\\
    6     & Edison Int'l (Edison) & Utilities (UTI)\\
    7     & Lockheed Martin Corp. (Lockheed) & Industrial (IND)\\
    8     & PepsiCo Inc. (PepsiCo) & Consumer Staples (CS)\\
    9     & Hartford Financial Svc. GP (Hartford) & Financials (FIN)\\
    10    & Exxon Mobil Corp. (Exxon) & Energy (ENE)\\
    \bottomrule
    \end{tabular}%
  \label{tab:finance_selsto}%
\end{table}%

\begin{table}[htbp]
  \centering
  \caption{Selected currencies}
    \begin{tabular}{llll}
    \toprule
          & Currencies &       & Currencies \\
    \midrule
    1     & Euro / U.S. & 9     & Mexico / U.S. \\
    2     & U.K. / U.S. & 10    & South Korea / U.S. \\
    3     & Swiss / U.S. & 11    & India / U.S. \\
    4     & Australia / U.S. & 12    & Thailand / U.S. \\
    5     & New Zealand / U.S. & 13    & South Africa / U.S. \\
    6     & Canada / U.S. & 14    & Norway / U.S. \\
    7     & Singapore / U.S. & 15    & Sweden / U.S. \\
    8     & Brazil / U.S. &       &  \\
    \bottomrule
    \end{tabular}%
  \label{tab:exchange_selcur}%
\end{table}%

\end{document}